 \newtheorem{remark}{Remark}
\newtheorem{observation}[remark]{Observation}
 \newtheorem{lemma}[remark]{Lemma}
 \newtheorem{theorem}[remark]{Theorem}
 \newtheorem{proposition}[remark]{Proposition}
 \newtheorem{corollary}[remark]{Corollary}
    \newtheorem{conjecture}[remark]{Conjecture}
\title{Strong resolving graphs: the realization and the characterization problems}
\date{}
\author{D. Kuziak$^{(1)}$,  M. L. Puertas$^{(2)}$, \\
J.  A. Rodr\'{\i}guez-Vel\'{a}zquez$^{(1)}$, I. G. Yero$^{(3)}$\\
\\
$^{(1)}${\small Departament d'Enginyeria Inform\`atica i Matem\`atiques,}\\
{\small Universitat Rovira i Virgili,} {\small Av. Pa\"{\i}sos Catalans 26, 43007 Tarragona, Spain.}\\
{\small dorota.kuziak\@@urv.cat, juanalberto.rodriguez\@@urv.cat}\\
$^{(2)}$
{\small Departamento de Matem\'{a}ticas, Universidad de Almer\'{i}a}\\ {\small  Ctra. Sacramento s/n, La Ca\~nada de San Urbano, 04120 Almer\'{i}a, Spain}\\
{\small mpuertas\@@ual.es}
\\
$^{(3)}${\small Departamento de Matem\'aticas, Escuela Polit\'ecnica Superior de Algeciras}\\
{\small Universidad de C\'adiz,} {\small Av. Ram\'on Puyol s/n, 11202 Algeciras, Spain.}\\
{\small ismael.gonzalez\@@uca.es}\\
}
\begin{document}
\maketitle

\begin{abstract}
The  strong resolving graph $G_{SR}$ of a connected graph $G$ was introduced in [Discrete Applied Mathematics 155 (1) (2007) 356--364] as a tool to study the strong metric dimension of $G$. Basically, it was shown  that the problem of finding the strong metric dimension of $G$ can be transformed to the problem of finding the vertex cover number of  $G_{SR}$.
Since then, several articles dealing with this subject have been published.
In this paper, we survey the state of knowledge on  the strong resolving graph  and also derive some new results.
\end{abstract}

{\it Keywords:} Strong resolving graph; strong metric dimension.

{\it AMS Subject Classification Numbers:}    05C76. 


\section{Introduction}


Graphs are basic combinatorial structures, and transformations  of structures are  fundamental  to the development of mathematics.  Particularly, in graph theory, some elementary transformations generate a new graph from an original one by some simple local changes, such as addition or deletion of a vertex or of an edge, merging and splitting of vertices, edge contraction, etc. Other advanced transformations create a new graph from the original one by complex changes, such as complement graph, line graph, total graph, graph power, dual graph, strong resolving graph, etc.

Some of these transformations of graphs emerged as a natural tool to solve practical problems. In other cases, the problem of finding a specific parameter of a graph has become the problem of finding another parameter of another graph obtained from the original one.  This is the case of the strong resolving graph $G_{SR}$ of a connected graph $G$ which was introduced by Oellermann and Peters-Fransen in \cite{Oellermann2007} as a tool to study the strong metric dimension of $G$. Basically, it was shown  that the problem of finding the strong metric dimension of $G$ can be transformed to the problem of finding the vertex cover number of  $G_{SR}$.
Since then, several articles dealing with the strong resolving graph have been published. However, in almost all these works the results related to the strong resolving graph are not explicit, as they implicitly appear as a part of the proofs of  main results concerning the strong metric dimension. In this sense, this interesting construction has passed in front of researchers's eyes without the attention that should require.
In this paper, we make an attempt of motivating the community of graph theorists to have a look into this direction and take more in consideration this construction. Accordingly, herein we survey the state of knowledge on  the strong resolving graph  and also derive some new results.

For a graph transformation, there are two general problems \cite{Gunbaum1969}, which we shall formulate in terms of strong resolving graphs:

\begin{itemize}
\item  \textbf{Realization Problem.}\footnote{This problem was called Determination Problem in \cite{Gunbaum1969}.} Determine which graphs have a given graph as their strong resolving graphs.
\item  \textbf{Characterization Problem.} Characterize those graphs that are strong resolving graphs of some graphs.
\end{itemize}

The majority of results presented in this paper concerns the above mentioned problems. Basically,  we focus on the following graph equation
\begin{equation}\label{SRG-GraphEquation}
G_{SR}\cong H,
\end{equation}
\emph{i.e.}, the goal is to find all pairs of graphs $G$ and $H$ satisfying \eqref{SRG-GraphEquation}.

The remainder of the paper is structured as follows. Subsection \ref{SubSectTerminology} covers general notation and terminology. Subsection \ref{SubSectStrongMetricDimension} is devoted to introduce the strong metric dimension, whereas Subsection \ref{SubSectStrongResolvingGraph} introduces the strong resolving graph. In Section \ref{SectionDetermination problem} we study the realization problem for some specific families of graphs, while in Section \ref{SectionCharacterization problem for product graphs} we collect the known results related to the characterization problem of product graphs. We close our exposition with a collection of open problems to be dealt with. In order to gain more completeness of this work, we include or improve the proofs of some results which are remarkable for the topic, although the main part of them are already published in some journals.

\subsection{Notation and Terminology}\label{SubSectTerminology}

We continue by establishing the basic terminology and notations which is used throughout this work. For the sake of completeness we refer the reader to the books \cite{Diestel2005,Hammack2011,West1996}. Graphs considered herein are undirected, finite and contain neither loops nor multiple edges. Let $G$ be a graph of order $n = |V(G)|$. A graph is nontrivial if $n\ge 2$. We use the notation $u\sim v$ for two adjacent vertices $u$ and $v$ of $G$. For a vertex $v$ of $G$, $N_G(v)$ denotes the set of neighbors that $v$ has in $G$, \emph{i.e.},  $N_G(v)=\{u\in V(G):\; u\sim v\}$. The set $N_G(v)$ is called the \emph{open neighborhood of a vertex} $v$ in $G$ and $N_G[v]=N_G(v)\cup \{v\}$ is called the \emph{closed neighborhood of a vertex} $v$ in $G$. The \emph{degree} of a vertex $v$ of $G$ is denoted by $\delta_G(v)$, \emph{i.e.}, $\delta_G(v)=|N_G(v)|$. The \emph{open neighborhood of a set} $S$ of vertices of $G$ is $N_G(S)=\bigcup_{v\in S}N_G(v)$ and the \emph{closed neighborhood of} $S$ is $N_G[S]=N_G(S)\cup S$.

We use the notation $K_n$, $C_n$, $P_n$, and $N_n$ for the \emph{complete graph}, \emph{cycle}, \emph{path}, and \emph{empty graph}, respectively. Moreover, we write $K_{s,t}$ for the \emph{complete bipartite graph} of order $s+t$ and in particular case $K_{1,n}$ for the \emph{star} of order $n+1$. Let $T$ be a \emph{tree}, a vertex of degree one in $T$ is called a \emph{leaf} and the number of leaves in $T$ is denoted by $l(T)$.

The \emph{distance} between two vertices $u$ and $v$, denoted by $d_{G}(u,v)$, is the length of a shortest path between $u$ and $v$ in $G$. The \emph{diameter}, $D(G)$, of $G$ is the longest distance between any two vertices of $G$ and two vertices $u,v\in V(G)$ such that $d_G(u,v)=D(G)$ are called \emph{diametral}. If $G$ is not connected, then we assume that the distance between any two vertices belonging to different components of $G$ is infinity and, thus, its diameter is $D(G)=\infty$.\label{g graph} A graph $G$ is $2$-\emph{antipodal} if for each vertex $x\in V(G)$ there exists exactly one vertex $y\in V(G)$ such that $d_G(x,y)=D(G)$. For instance, even cycles and hypercubes are $2$-antipodal graphs.

We recall that the \emph{complement} of $G$ is the graph $G^c$ with the same vertex set as $G$ and $uv\in E(G^c)$ if and only if $uv\notin E(G)$. The \emph{subgraph induced by a set} $X$ is denoted by $\langle X \rangle$\label{g ind subgraph}. A vertex of a graph is a {\em simplicial vertex} if the subgraph induced by its neighbors is a complete graph. Given a graph $G$, we denote by $\sigma(G)$ the set of simplicial vertices of $G$.

A \emph{clique} in $G$ is a set of pairwise adjacent vertices. The \emph{clique number} of $G$, denoted by $\omega(G)$\label{g clique}, is the number of vertices in a maximum clique in $G$. Two distinct vertices $u$, $v$ are called \emph{true twins} if $N_G[u] = N_G[v]$. In this sense, a vertex $x$ is a \emph{twin} if there exists $y\ne x$ such that they are true twins. We say that $X\subset V(G)$ is a \emph{twin-free clique} in $G$ if the subgraph induced by $X$ is a clique and for every $u,v\in X$ it follows $N_G[u]\ne N_G[v]$, \emph{i.e.}, the subgraph induced by $X$ is a clique and it contains no true twins. The \emph{twin-free clique number} of $G$, denoted by $\varpi(G)$, is the maximum cardinality among all twin-free cliques in $G$. So, $\omega(G)\ge \varpi(G)$. We refer to a $\varpi(G)$-set in a graph $G$ as a twin-free clique  of cardinality $\varpi(G)$.
Figure \ref{ex twins} shows examples of basic concepts such as true twins and twin-free clique.

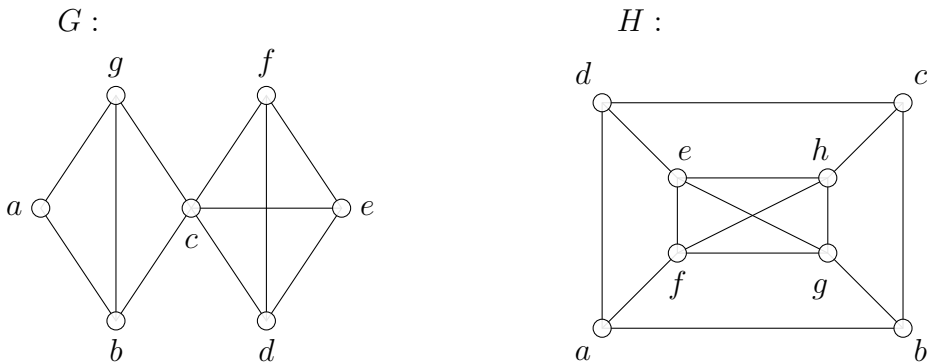
\begin{figure}[ht]
\centering
\begin{tabular}{cccccc}
\begin{tikzpicture}

\draw(0,0) -- (1,1.5) -- (2,0) -- (1,-1.5) -- cycle;
\draw(2,0) -- (3,1.5) -- (4,0) -- (3,-1.5) -- cycle;
\draw(1,1.5) -- (1,-1.5);
\draw(3,1.5) -- (3,-1.5);
\draw(2,0) -- (4,0);

\filldraw[fill opacity=0.9,fill=white]  (0,0) circle (0.12cm);
\filldraw[fill opacity=0.9,fill=white]  (1,1.5) circle (0.12cm);
\filldraw[fill opacity=0.9,fill=white]  (2,0) circle (0.12cm);
\filldraw[fill opacity=0.9,fill=white]  (1,-1.5) circle (0.12cm);
\filldraw[fill opacity=0.9,fill=white]  (3,1.5) circle (0.12cm);
\filldraw[fill opacity=0.9,fill=white]  (4,0) circle (0.12cm);
\filldraw[fill opacity=0.9,fill=white]  (3,-1.5) circle (0.12cm);

\node at (0.5,2.5) {$G:$};

\node [left] at (-0.1,0) {$a$};
\node [below] at (1,-1.6) {$b$};
\node [below] at (2,-0.2) {$c$};
\node [below] at (3,-1.6) {$d$};
\node [right] at (4.1,0) {$e$};
\node [above] at (3,1.6) {$f$};
\node [above] at (1,1.6) {$g$};

\end{tikzpicture} & & \hspace*{0.7cm} & &
\begin{tikzpicture}
\draw(0,-1.5) -- (0,1.5) -- (4,1.5) -- (4,-1.5) -- cycle;
\draw(1,-0.5) -- (1,0.5) -- (3,0.5) -- (3,-0.5) -- cycle;
\draw(1,-0.5) -- (3,0.5);
\draw(1,0.5) -- (3,-0.5);
\draw(0,1.5) -- (1,0.5);
\draw(0,-1.5) -- (1,-0.5);
\draw(4,1.5) -- (3,0.5);
\draw(4,-1.5) -- (3,-0.5);

\filldraw[fill opacity=0.9,fill=white]  (0,-1.5) circle (0.12cm);
\filldraw[fill opacity=0.9,fill=white]  (0,1.5) circle (0.12cm);
\filldraw[fill opacity=0.9,fill=white]  (4,1.5) circle (0.12cm);
\filldraw[fill opacity=0.9,fill=white]  (4,-1.5) circle (0.12cm);
\filldraw[fill opacity=0.9,fill=white]  (1,-0.5) circle (0.12cm);
\filldraw[fill opacity=0.9,fill=white]  (1,0.5) circle (0.12cm);
\filldraw[fill opacity=0.9,fill=white]  (3,0.5) circle (0.12cm);
\filldraw[fill opacity=0.9,fill=white]  (3,-0.5) circle (0.12cm);

\node at (0.5,2.6) {$H:$};

\node [below left] at (0,-1.6) {$a$};
\node [below right] at (4,-1.5) {$b$};
\node [above right] at (4,1.6) {$c$};
\node [above left] at (0,1.6) {$d$};
\node [above] at (1.1,0.6) {$e$};
\node [below] at (1,-0.6) {$f$};
\node [below] at (2.9,-0.7) {$g$};
\node [above] at (2.9,0.6) {$h$};

\end{tikzpicture} \\
\end{tabular}
\caption{The set $\{d,e,f\}\subset V(G)$ is composed by true twin vertices in $G$. Notice that $b$ and $g$ are true twin vertices in $G$ which are not simplicial, while $f$ and $d$ are true twin and simplicial vertices. The set $\{e,f,g,h\}\subset V(H)$ is a twin-free clique in $H$.}
\label{ex twins}
\end{figure}

For the remainder of the paper, definitions will be introduced whenever a concept is needed.

\subsection{Strong Metric Dimension of Graphs}\label{SubSectStrongMetricDimension}



A vertex $w\in V(G)$ \emph{strongly resolves} two different vertices $u,v\in V(G)$ if $d_G(w,u)=d_G(w,v)+d_G(v,u)$ or $d_G(w,v)=d_G(w,u)+d_G(u,v)$, \emph{i.e.}, there exists some shortest $w-u$ path containing $v$ or some shortest $w-v$ path containing $u$. A set $S$ of vertices in a connected graph $G$ is a \emph{strong metric generator} for $G$ if every two vertices of $G$ are strongly resolved by some vertex of $S$. The smallest cardinality of a strong metric generator for $G$ is called the \emph{strong metric dimension} and is denoted by $dim_s(G)$\label{g dims}. A \emph{strong metric basis} of $G$ is a strong metric generator for $G$ of cardinality $dim_s(G)$.

Several researches on the strong metric dimension of graphs have recently been developed. For instance, the trivial bounds $1\le dim_s(G)\le n-1$ are known from the first works as well as characterizations on whether they are tight. Moreover, it has been noticed that the strong metric dimension of several graphs can be straightforwardly computed for some basic examples which we next remark.

\begin{observation}\label{values-sdim-basic}$\,$
\begin{enumerate}[{\rm(a)}]
  \item $dim_s(G)=1$ if and only if $G$ is isomorphic to the path $P_n$ on $n\ge 2$ vertices.
  \item $dim_s(G)=n-1$ if and only if $G$ is isomorphic to the complete graph $K_n$ on $n\ge 2$ vertices.
  \item For any cycle $C_n$ of order $n$, $dim_s(C_n) = \lceil n/2\rceil$.
  \item For any tree $T$ with $l(T)$ leaves, $dim_s(T) = l(T)-1$.
  \item For any complete bipartite graph $K_{r,t}$, $dim_s(K_{r,t}) = r+t-2$.
\end{enumerate}
\end{observation}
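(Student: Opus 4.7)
The backbone of all five arguments is a single lemma I would establish at the outset: if two vertices $u,v\in V(G)$ are \emph{mutually maximally distant}, meaning that $d_G(u',v)\le d_G(u,v)$ for every $u'\in N_G(u)$ and $d_G(u,v')\le d_G(u,v)$ for every $v'\in N_G(v)$, then no vertex outside $\{u,v\}$ can strongly resolve the pair; hence every strong metric generator must contain $u$ or $v$. This follows straight from the definition, since a strong resolver $w$ of $u,v$ forces a neighbor of $u$ (on the shortest $w$-$u$ path) to be further from $v$ than $u$ itself, or symmetrically. With this lemma in hand, each item reduces to (i) identifying the mutually maximally distant (MMD) pairs to extract a lower bound, and (ii) exhibiting a strong metric generator of matching size.

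For (a), an endpoint $w$ of $P_n$ lies collinearly with every pair of other vertices, so $\{w\}$ is a strong metric generator; conversely, a single resolving vertex forces every other pair to lie on a common shortest path through it, which is only possible when $G$ is a path. For (b), every two distinct vertices of $K_n$ form an MMD pair (distance one, every neighbor at distance zero or one), so the key lemma yields $\dim_s(K_n)\ge n-1$, with equality achieved by any $V(K_n)\setminus\{x\}$; for the converse, if $G$ is not complete then two non-adjacent vertices at distance at least two possess an intermediate strong resolver, which allows one to shrink a generator by at least two vertices.

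For (c), (d), (e) the recipe is uniform: determine the MMD pairs, which are the antipodal pairs in $C_n$, pairs of leaves in a tree, and pairs on the same side of $K_{r,t}$. The resulting MMD relation forms a perfect matching on $C_n$ when $n$ is even (or a copy of $C_n$ itself when $n$ is odd), the complete graph $K_{l(T)}$ on the leaves of $T$, and the disjoint union $K_r\cup K_t$, whose vertex covers are respectively $\lceil n/2\rceil$, $l(T)-1$ and $r+t-2$. Matching generators are any $\lceil n/2\rceil$ consecutive vertices of $C_n$, all but one leaf of $T$, and $V(K_{r,t})$ with a single vertex removed from each part.

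The main obstacle is verifying the upper bounds, and in particular showing that $l(T)-1$ leaves of a tree form a strong metric generator. My approach here would be a path-extension argument: given arbitrary $x,y\in V(T)$, prolong the unique $x$-$y$ path in both directions until reaching leaves $\ell_x$ and $\ell_y$. Each of these two leaves strongly resolves $(x,y)$, and since at most one leaf is excluded, at least one of $\ell_x,\ell_y$ lies in the chosen set. Essentially the same path-extension argument (suitably adapted to an antipodal pair or to a cross-part pair) handles the upper bounds in (c) and (e), completing the proof.
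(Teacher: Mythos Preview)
The paper states this observation without proof (it is presented as well-known background), so there is no argument in the paper to compare against. Your plan is sound in spirit, and the key lemma---that an MMD pair can only be strongly resolved by one of its own members---is correct and exactly the right engine for the lower bounds; the neighbor-on-the-path justification you sketch goes through.

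There is, however, a genuine error in your converse for (b). You write that two non-adjacent vertices $u,v$ ``possess an intermediate strong resolver,'' but a vertex $w$ lying \emph{between} $u$ and $v$ on a shortest $u$--$v$ path never strongly resolves them: by definition $w$ resolves $(u,v)$ only if $u$ lies on a shortest $w$--$v$ path or $v$ on a shortest $w$--$u$ path, i.e.\ $w$ must sit \emph{beyond} one endpoint, not between them. Consequently your proposed move of deleting both $u$ and $v$ from the trivial generator $V(G)$ fails whenever $u,v$ happen to be MMD (as with either diagonal of $C_4$), since then no third vertex resolves the pair at all. A correct elementary fix: if $G\not\cong K_n$ take a diametral geodesic $x_0x_1\cdots x_D$ with $D\ge 2$ and observe that $V(G)\setminus\{x_0,x_1\}$ is already a strong metric generator, because $x_D$ strongly resolves the only uncovered pair $(x_0,x_1)$ via $d(x_D,x_0)=d(x_D,x_1)+d(x_1,x_0)$; this yields $\dim_s(G)\le n-2$.

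The remaining items (a), (c), (d), (e) are handled correctly by your plan; in particular the path-extension argument for trees and the identification of the MMD structure (perfect matching or $C_n$ for cycles, $K_{l(T)}$ for trees, $K_r\cup K_t$ for complete bipartite graphs) are accurate and match what the paper later records in Observation~\ref{observation1}.
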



The strong metric dimension is a relatively new parameter (defined in 2004). Since then, this parameter has been investigated for several classes of graphs. For instance, we cite the works on Cartesian product graphs \cite{Kratica2012,Oellermann2007,RodriguezVelazquez2014a}, Cartesian sum graphs \cite{Kuziak2014b}, corona graphs \cite{Kuziak2013}, direct product graphs \cite{Kuziak2016a,RodriguezVelazquez2014a}, strong product graphs \cite{Kuziak2013c}, lexicographical product graphs \cite{Kuziak2014}, Cayley graphs \cite{Oellermann2007}, Sierpi\'{n}ski graphs \cite{Rodriguez-Velazquez2016}, distance-hereditary graphs \cite{May2011}, and convex polytopes \cite{Kratica2012a}. Also, some Nordhaus-Gaddum type results for the strong metric dimension of a graph and its complement are known \cite{Yi2013}. Besides the theoretical results related to the strong metric dimension, a mathematical programming model \cite{Kratica2012a} and metaheuristic approaches \cite{Kratica2008,Mladenovic2012a} for finding this parameter have been developed. Some complexity and approximation results are also known from the works \cite{Oellermann2007} and \cite{DasGupta2016}, respectively. On the other hand, a fractional version of the strong metric dimension has been studied in \cite{Kang2016,Kang2016A,Kang2013}. In these three works the strong resolving graph is also used as an important tool. For more information we refer the reader to the survey \cite{Kratica2014} and the Ph.D. thesis \cite{Kuziak2014a}.


\subsection{The Strong Resolving Graph}\label{SubSectStrongResolvingGraph}

In \cite{Oellermann2007}, the authors have developed an approach which transforms the problem of finding the strong metric dimension of a graph to the problem of computing the vertex cover number of some other related graph. This relationship arises in connected with the following definitions.

A vertex $u$ of $G$ is \emph{maximally distant} from $v$ if for every vertex $w\in N_G(u)$, $d_G(v,w)\le d_G(u,v)$. We denote by $M_G(v)$ the set of vertices of $G$ which are maximally distant from $v$. The collection of all vertices of $G$ that are maximally distant from some vertex of the graph is called the {\em boundary} of the graph, see \cite{Brevsar2008,Caceres2005}, and is denoted by $\partial(G)$\footnote{In fact, the boundary $\partial(G)$ of a graph was defined first in \cite{Chartrand2003d} as the subgraph of $G$ induced by the set mentioned in our work with the same notation. We follow the approach of \cite{Brevsar2008,Caceres2005} where the boundary of the graph is just the subset of the boundary vertices defined in this article.}\label{g boundary}. If $u$ is maximally distant from $v$ and $v$ is maximally distant from $u$, then $u$ and $v$ are \emph{mutually maximally distant} (from now on MMD for short).

\begin{remark}
$\partial(G)=\{u\in V(G):$ there exists $v\in V(G)$ such that $u,v$ are MMD$\}$.
\end{remark}
\begin{proof}
On the one hand, if $u$ is maximally distant from $v$, and $v$ is not maximally distant from $u$, then $v$ has a neighbor $v_1$, such that $d_G(v_1, u) > d_G(v,u)$, \emph{i.e.}, $d_G(v_1, u) =d_G(v,u)+1$. It is easily seen that $u$ is maximally distant from $v_1$. If $v_1$ is not maximally distant from $u$, then $v_1$ has a neighbor $v_2$, such that $d_G(v_2, u) >d_G(v_1,u)$. Continuing in this manner we construct a sequence of vertices $v_1,v_2, \ldots$ such that $d_G(v_{i+1}, u) > d_G(v_i, u)$ for every $i$. Since $G$ is finite this sequence terminates with some $v_k$. Thus for all neighbors $x$ of $v_k$ we have $d_G(v_k,u) \ge d_G(x,u)$, and so $v_k$ is maximally distant from $u$ and $u$ is maximally distant from $v_k$. Hence every boundary vertex belongs to the set $S=\{u\in V(G):$ there exists $v\in V(G)$ such that $u,v$ are MMD$\}$. On the other hand, certainly every vertex of $S$ is a boundary vertex.
\end{proof}

For some basic graph classes, such as complete graphs $K_n$, complete bipartite graphs $K_{r,s}$, cycles $C_n$ and hypercube graphs $Q_k$, the boundary is simply the whole vertex set. It is not difficult to see that this property also holds for all $2$-antipodal graphs and for all vertex transitive graphs. Notice that the boundary of a tree consists of its leaves. Also, it is readily seen that every simplical vertex is a boundary vertex, that is $\sigma(G)\subseteq \partial(G)$.

Figure \ref{mutually and boundary} shows examples of basic concepts such as maximally distant vertices, MMD vertices and boundary. As a direct consequence of the definition of MMD vertices, we have the following.

\begin{figure}[ht]
\centering
\begin{tikzpicture}

\draw(0,0) -- (1,1.5) -- (2,0) -- (1,-1.5) -- cycle;
\draw(2,0) -- (3,1.5) -- (4,0) -- (3,-1.5) -- cycle;
\draw(4,0) -- (5,1.5) -- (6,0) -- (5,-1.5) -- cycle;
\draw(1,1.5) -- (1,-1.5);
\draw(5,1.5) -- (5,-1.5);
\draw(4,0) -- (6,0);

\filldraw[fill opacity=0.9,fill=white]  (0,0) circle (0.12cm);
\filldraw[fill opacity=0.9,fill=white]  (1,1.5) circle (0.12cm);
\filldraw[fill opacity=0.9,fill=white]  (2,0) circle (0.12cm);
\filldraw[fill opacity=0.9,fill=white]  (1,-1.5) circle (0.12cm);
\filldraw[fill opacity=0.9,fill=white]  (3,1.5) circle (0.12cm);
\filldraw[fill opacity=0.9,fill=white]  (4,0) circle (0.12cm);
\filldraw[fill opacity=0.9,fill=white]  (3,-1.5) circle (0.12cm);
\filldraw[fill opacity=0.9,fill=white]  (5,1.5) circle (0.12cm);
\filldraw[fill opacity=0.9,fill=white]  (6,0) circle (0.12cm);
\filldraw[fill opacity=0.9,fill=white]  (5,-1.5) circle (0.12cm);

\node [left] at (-0.1,0) {$a$};
\node [below] at (1,-1.6) {$b$};
\node [below] at (2,-0.2) {$c$};
\node [below] at (3,-1.6) {$d$};
\node [below] at (4,-0.2) {$e$};
\node [below] at (5,-1.6) {$f$};
\node [right] at (6.1,0) {$g$};
\node [above] at (5,1.6) {$h$};
\node [above] at (3,1.6) {$i$};
\node [above] at (1,1.6) {$j$};

\end{tikzpicture}
\caption{The set $\{a,f,g,h\}$ is composed by simplicial vertices and its elements are MMD between them. Also, $b$ and $j$ ($d$ and $i$) are MMD. Thus, the boundary of $G$ is $\partial(G)=\{a,b,d,f,g,h,i,j\}$. Now, $M_G(d)=\{a,f,g,h,i\}$ is the set of vertices which are maximally distant from $d$. Nevertheless, the vertex $d$ is maximally distant only from the vertex $i$.}
\label{mutually and boundary}
\end{figure}
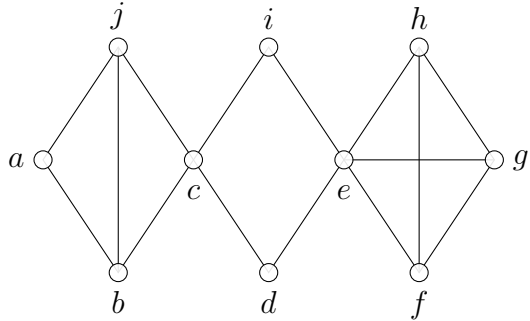

\begin{remark}
For every pair of MMD vertices $x,y$ of a connected graph $G$ and for every strong metric basis $S$ of $G$, it follows that $x\in S$ or $y\in S$.
\end{remark}

By using the concepts of boundary of a graph and MMD vertices, the notion of strong resolving graph was introduced in \cite{Oellermann2007} in the following way.  The \emph{strong resolving graph} of $G$ has vertex set of $V(G)$ and two vertices $u,v$ are adjacent if and only if $u$ and $v$ are MMD in $G$. Observe that the vertices of the set $V(G)-\partial(G)$ are isolated vertices in the strong resolving graph. According to this fact, in this work we use two slightly different versions of it, which are next stated.

The first version is denoted as $G_{SR}$\label{g GSR} while the second one is denoted by $G_{SR+I}$. The graph $G_{SR}$ has vertex set $\partial(G)$ and $G_{SR+I}$ has vertex set $V(G)$. Clearly, the difference between $G_{SR}$ and $G_{SR+I}$ is the existence of isolated vertices in $G_{SR+I}$, when $V(G)-\partial(G)\ne \emptyset$ and notice that the graph $G_{SR+I}$ coincides with the original definition presented in \cite{Oellermann2007}. The concept of the strong resolving graph $G_{SR}$ is used in this work rather than that of $G_{SR+I}$. The main reason of this fact is related to have a simpler notation and more clarity while proving the results. Figure \ref{GSR and GSR+I} shows the strong resolving graphs $G_{SR}$ and $G_{SR+I}$ of the graph $G$ illustrated in Figure \ref{mutually and boundary}.

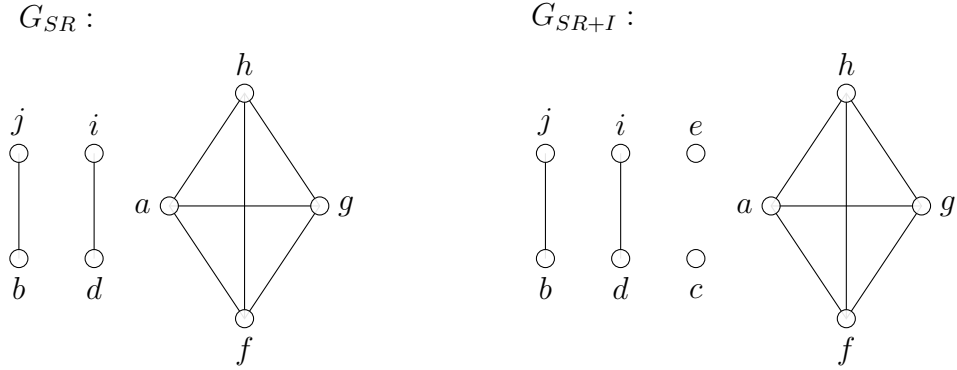
\begin{figure}[ht]
\centering
\begin{tikzpicture}

\draw(2,0) -- (3,1.5) -- (4,0) -- (3,-1.5) -- cycle;
\draw(0,0.7) -- (0,-0.7);
\draw(1,0.7) -- (1,-0.7);
\draw(3,1.5) -- (3,-1.5);
\draw(2,0) -- (4,0);

\filldraw[fill opacity=0.9,fill=white]  (0,0.7) circle (0.12cm);
\filldraw[fill opacity=0.9,fill=white]  (0,-0.7) circle (0.12cm);
\filldraw[fill opacity=0.9,fill=white]  (1,0.7) circle (0.12cm);
\filldraw[fill opacity=0.9,fill=white]  (2,0) circle (0.12cm);
\filldraw[fill opacity=0.9,fill=white]  (1,-0.7) circle (0.12cm);
\filldraw[fill opacity=0.9,fill=white]  (3,1.5) circle (0.12cm);
\filldraw[fill opacity=0.9,fill=white]  (4,0) circle (0.12cm);
\filldraw[fill opacity=0.9,fill=white]  (3,-1.5) circle (0.12cm);

\node at (0.5,2.5) {$G_{SR}:$};

\node [below] at (0,-0.8) {$b$};
\node [below] at (1,-0.8) {$d$};
\node [left] at (1.9,0) {$a$};
\node [below] at (3,-1.6) {$f$};
\node [right] at (4.1,0) {$g$};
\node [above] at (3,1.6) {$h$};
\node [above] at (1,0.8) {$i$};
\node [above] at (0,0.8) {$j$};


\draw(10,0) -- (11,1.5) -- (12,0) -- (11,-1.5) -- cycle;
\draw(7,0.7) -- (7,-0.7);
\draw(8,0.7) -- (8,-0.7);
\draw(11,1.5) -- (11,-1.5);
\draw(10,0) -- (12,0);

\filldraw[fill opacity=0.9,fill=white]  (7,0.7) circle (0.12cm);
\filldraw[fill opacity=0.9,fill=white]  (7,-0.7) circle (0.12cm);
\filldraw[fill opacity=0.9,fill=white]  (8,0.7) circle (0.12cm);
\filldraw[fill opacity=0.9,fill=white]  (10,0) circle (0.12cm);
\filldraw[fill opacity=0.9,fill=white]  (8,-0.7) circle (0.12cm);
\filldraw[fill opacity=0.9,fill=white]  (11,1.5) circle (0.12cm);
\filldraw[fill opacity=0.9,fill=white]  (12,0) circle (0.12cm);
\filldraw[fill opacity=0.9,fill=white]  (11,-1.5) circle (0.12cm);
\filldraw[fill opacity=0.9,fill=white]  (9,-0.7) circle (0.12cm);
\filldraw[fill opacity=0.9,fill=white]  (9,0.7) circle (0.12cm);

\node at (7.5,2.5) {$G_{SR+I}:$};

\node [below] at (7,-0.8) {$b$};
\node [below] at (8,-0.8) {$d$};
\node [below] at (9,-0.9) {$c$};
\node [above] at (9,0.8) {$e$};
\node [left] at (9.9,0) {$a$};
\node [below] at (11,-1.6) {$f$};
\node [right] at (12.1,0) {$g$};
\node [above] at (11,1.6) {$h$};
\node [above] at (8,0.8) {$i$};
\node [above] at (7,0.8) {$j$};

\end{tikzpicture}
\caption{$G_{SR}$ and $G_{SR+I}$ of the graph $G$ illustrated in Figure \ref{mutually and boundary}.}
\label{GSR and GSR+I}
\end{figure}

There are several families of graphs for which the strong resolving graph can be relatively easily described. We next state some of these here.

\par\bigskip

\par\bigskip

\begin{observation}\label{observation1}$\,$
\begin{enumerate}[{\rm(a)}]
\item If $\partial(G)=\sigma(G)$, then $G_{SR}\cong K_{|\partial(G)|}$. In particular, $(K_n)_{SR}\cong K_n$ and for any tree $T$, $T_{SR}\cong K_{l(T)}$.
\item For any $2$-antipodal graph $G$ of order $n$, $G_{SR}\cong \bigcup_{i=1}^{\frac{n}{2}} K_2$. In particular, $(C_{2k})_{SR}\cong \bigcup_{i=1}^{k} K_2$.
\item For odd cycles $(C_{2k+1})_{SR}\cong C_{2k+1}$.
\item For any complete $k$-partite graph $G=K_{p_1,p_2,\dots ,p_k}$ such that $p_i\ge 2$, $i\in\{1,2,\dots ,k\}$, $G_{SR}\cong\bigcup_{i=1}^{k}K_{p_i}$.
\end{enumerate}
\end{observation}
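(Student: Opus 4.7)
The plan is to handle each of the four parts separately by characterizing the MMD pairs, since they determine the edges of $G_{SR}$ while $\partial(G)$ provides the vertex set.

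For (a), the central lemma is that every simplicial vertex $u$ of $G$ is maximally distant from every other vertex $v$: picking any shortest $v$-$u$ path $v = u_0, u_1, \dots, u_k = u$ and any $w \in N_G(u)$, the fact that $N_G(u)$ is a clique (by simpliciality) makes $u_{k-1}$ and $w$ equal or adjacent, so $d_G(v,w) \le (k-1)+1 = d_G(v,u)$. Consequently any two simplicial vertices are MMD, and under the hypothesis $\partial(G) = \sigma(G)$ we obtain $G_{SR} \cong K_{|\partial(G)|}$. The two specializations then follow because $\partial(K_n) = \sigma(K_n) = V(K_n)$, and in a tree $T$ both $\sigma(T)$ and $\partial(T)$ coincide with the set of leaves: a non-leaf vertex has an independent neighborhood in the tree, so it is not simplicial, and since it has at least two tree-neighbors, for any other vertex $v$ the unique $v$-$u$ path uses one of them as penultimate vertex, while the other neighbor lies at distance $d_G(v,u)+1$, showing $u$ is not maximally distant from $v$.

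For (b), set $D = D(G)$. By $2$-antipodality each vertex $x$ has eccentricity $D$ and a unique antipode $\sigma(x)$ at distance $D$, and the pair $\{x,\sigma(x)\}$ is MMD since every neighbor of $x$ is at distance at most $D$ from $\sigma(x)$, and symmetrically. This already exhibits a perfect matching in $G_{SR}$ and forces $\partial(G) = V(G)$. The step I expect to be the main obstacle of the whole observation is the uniqueness statement, namely that no $x$ has an MMD partner other than $\sigma(x)$. The plan is by contradiction: assume $y$ is MMD with $x$ and $d_G(x,y) = k < D$. First, $\sigma$ is an involution (since $d_G(\sigma(x),x) = D$ makes $x$ the unique antipode of $\sigma(x)$), so $\sigma(x) \ne \sigma(y)$ (otherwise applying $\sigma$ would give $x = y$) and therefore $d_G(y,\sigma(x)) \le D-1$. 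Combining this with the triangle-inequality lower bound $d_G(y,\sigma(x)) \ge D-k$ and with the MMD constraint $d_G(y,w) \le k$ for every $w \in N_G(x)$, propagated along a shortest $x$-to-$\sigma(x)$ path and symmetrically from $y$'s side, is intended to force a contradiction. This distance chase is the delicate core of the argument.

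For (c), the cycle $C_{2k+1}$ has diameter $k$ and each vertex $i$ has exactly two vertices at distance $k$, namely $i+k$ and $i+k+1$ (indices modulo $2k+1$); a quick check against the two neighbors $i \pm 1$ of $i$ confirms these are precisely the MMD partners of $i$. Therefore $(C_{2k+1})_{SR}$ is $2$-regular with $i$ adjacent to $i+k$ and $i+k+1$, equivalently the Cayley graph of $\mathbb{Z}_{2k+1}$ with connection set $\{k,k+1\}$; since these two generators differ by $1$ they generate the whole group, so the graph is connected and is therefore a single $(2k+1)$-cycle. For (d), in $G = K_{p_1,\dots,p_k}$ with every $p_i \ge 2$ and parts $V_1,\dots,V_k$, distances are $1$ across parts and $2$ within a part; two vertices $u,v$ in the same part $V_i$ are MMD because every $w \in N_G(u) = V \setminus V_i$ satisfies $d_G(v,w) = 1 \le 2 = d_G(u,v)$, whereas two vertices $u \in V_i$ and $v \in V_j$ with $i \ne j$ are not MMD because any $w \in V_j \setminus \{v\}$ (which exists since $p_j \ge 2$) is a neighbor of $u$ with $d_G(v,w) = 2 > 1 = d_G(u,v)$. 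Hence $G_{SR} \cong \bigcup_{i=1}^{k} K_{p_i}$.
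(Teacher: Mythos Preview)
The paper does not prove this observation at all; it is stated as a list of easily described examples and left to the reader. So there is no ``paper's proof'' to compare against, and your write-up would in fact supply what the paper omits.

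Your arguments for (a), (c), and (d) are complete and correct. The simpliciality argument in (a) is the standard one; the Cayley-graph connectedness check in (c) is a clean way to confirm that the $2$-regular graph on $2k+1$ vertices is a single cycle; and the case analysis in (d) is exactly right, including the use of $p_j\ge 2$ to find the witness $w\in V_j\setminus\{v\}$.

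Part (b), however, is not a proof but a proof \emph{sketch}, and you say so yourself. You correctly show that each antipodal pair is MMD and that $\sigma$ is an involution, and you correctly isolate the real content: ruling out an MMD pair $x,y$ with $d_G(x,y)=k<D$. But the paragraph that is supposed to derive the contradiction stops at ``is intended to force a contradiction'' and ``This distance chase is the delicate core of the argument.'' Concretely, the inequalities you list --- $D-k\le d_G(y,\sigma(x))\le D-1$ and $d_G(y,w)\le k$ for $w\in N_G(x)$ --- are all true, but propagating the second one along a shortest $x$--$\sigma(x)$ path $x=x_0,x_1,\dots,x_D=\sigma(x)$ only gives $d_G(y,x_i)\le k+i-1$, hence $d_G(y,\sigma(x))\le k+D-1$, which does not contradict $d_G(y,\sigma(x))\le D-1$. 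You have not explained what additional ingredient (beyond the triangle inequality, the MMD condition, and the uniqueness of antipodes) closes the gap, and it is not clear that the outlined approach does close it. Either complete this step with an explicit argument or, if you cannot, flag (b) as the one item whose proof you are not supplying --- which is, after all, the same status it has in the paper.
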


Recall that a set $S$ of vertices of $G$ is a \emph{vertex cover} of $G$ if every edge of $G$ is incident with at least one vertex of $S$. The \emph{vertex cover number} of $G$, denoted by $\beta(G)$\label{g vertex cover}, is the smallest cardinality of a vertex cover of $G$. We refer to a $\beta(G)$-set in a graph $G$ as a vertex cover  of cardinality $\beta(G)$.

Oellermann and Peters-Fransen \cite{Oellermann2007} showed that the problem of finding the strong metric dimension of a connected graph $G$ can be transformed to the problem of finding the vertex cover number of $G_{SR+I}$.

\begin{theorem}{\em \cite{Oellermann2007}}\label{th oellermann1}
For any connected graph $G$, $dim_s(G) = \beta(G_{SR+I}).$
\end{theorem}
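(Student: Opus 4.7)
The plan is to establish $\dim_s(G)=\beta(G_{SR+I})$ by proving the two inequalities separately. The easy direction $\dim_s(G)\ge \beta(G_{SR+I})$ is essentially the content of the Remark preceding the theorem: given a strong metric basis $S$ of $G$, for every edge $xy$ of $G_{SR+I}$ the vertices $x,y$ are MMD in $G$, and any vertex $w$ strongly resolving the pair $x,y$ must coincide with $x$ or $y$ (since otherwise $w$ would have to lie strictly past one of them on a geodesic, contradicting maximal distance). Hence $S$ meets $\{x,y\}$, so $S$ is a vertex cover of $G_{SR+I}$ and $|S|\ge \beta(G_{SR+I})$. Isolated vertices of $G_{SR+I}$ play no role here.

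For the reverse inequality I would take a minimum vertex cover $C$ of $G_{SR+I}$ and show that $C$ is a strong metric generator, i.e., for every pair $u,v\in V(G)$ some vertex of $C$ strongly resolves them. The key is the following structural lemma: for any two distinct vertices $u,v$ there is an MMD pair $u^{*},v^{*}$ in $G$ such that $u,v$ both lie on a common geodesic joining $u^{*}$ and $v^{*}$. To produce this pair, I would first extend $v$ away from $u$: walk from $v$ along neighbors $w$ with $d_G(u,w)>d_G(u,\text{current})$ until stuck at a vertex $v^{*}$, which by construction is maximally distant from $u$ and satisfies $d_G(u,v^{*})=d_G(u,v)+d_G(v,v^{*})$. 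If $u$ is already maximally distant from $v^{*}$, set $u^{*}=u$; otherwise, symmetrically extend $u$ away from $v^{*}$ to a vertex $u^{*}$ maximally distant from $v^{*}$ with $d_G(v^{*},u^{*})=d_G(v^{*},u)+d_G(u,u^{*})$. A short triangle-inequality computation shows that for any neighbor $w$ of $v^{*}$, $d_G(u^{*},w)\le d_G(u^{*},u)+d_G(u,w)\le d_G(u^{*},u)+d_G(u,v^{*})=d_G(u^{*},v^{*})$, so $v^{*}$ remains maximally distant from $u^{*}$; hence $u^{*},v^{*}$ are MMD. Combining the two additive distance identities yields $d_G(u^{*},v)=d_G(u^{*},u)+d_G(u,v)$ and $d_G(v^{*},u)=d_G(v^{*},v)+d_G(v,u)$, so each of $u^{*},v^{*}$ strongly resolves the pair $u,v$.

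Once this lemma is available the conclusion is immediate: since $u^{*}v^{*}$ is an edge of $G_{SR+I}$ and $C$ is a vertex cover, at least one of $u^{*},v^{*}$ lies in $C$, and that vertex strongly resolves $u,v$. Therefore $C$ is a strong metric generator and $\dim_s(G)\le |C|=\beta(G_{SR+I})$. The main obstacle is the structural lemma producing the MMD pair; the subtlety is that one must guarantee not only the existence of a vertex maximally distant from $u$ on a geodesic extending $uv$, but also that the symmetric extension preserves the maximal distance property of the first vertex, which is precisely what the triangle-inequality argument above secures. The monotone termination of the extension procedure relies on finiteness of $G$, and the two additive identities must be maintained along each step of the walk so that the final geodesic containing $u$ and $v$ is available for the strong-resolution conclusion.
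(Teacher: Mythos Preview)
The paper does not supply its own proof of this theorem; it is quoted from \cite{Oellermann2007} without argument, so there is nothing to compare against directly. Your proposal is correct and is essentially the standard proof from that source: the inequality $\dim_s(G)\ge\beta(G_{SR+I})$ is exactly what the paper records (unproved) in the Remark immediately preceding the theorem, and the monotone ``walk outward until stuck'' device you use to reach a vertex maximally distant from a given one is the same construction the paper spells out in the proof of Remark~1. Your triangle-inequality verification that $v^{*}$ stays maximally distant from $u^{*}$ after the second extension is correct, and the identity $d_G(u^{*},v)=d_G(u^{*},u)+d_G(u,v)$ indeed follows from the two additive relations via $d_G(u^{*},v^{*})\le d_G(u^{*},v)+d_G(v,v^{*})$, so each of $u^{*},v^{*}$ strongly resolves $u,v$ as you claim.
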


Now, it is readily seen that $\beta(G_{SR+I})=\beta(G_{SR})$. Therefore, an analogous theorem to the one above can be stated by using $G_{SR}$ instead of $G_{SR+I}$.

\begin{theorem}\label{th oellermann}
For any connected graph $G$, $dim_s(G) = \beta(G_{SR}).$
\end{theorem}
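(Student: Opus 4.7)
The plan is to deduce the statement directly from Theorem \ref{th oellermann1} by showing that passing from $G_{SR+I}$ to $G_{SR}$ does not change the vertex cover number. Theorem \ref{th oellermann1} already gives $dim_s(G) = \beta(G_{SR+I})$, so the only thing left to verify is the equality $\beta(G_{SR+I}) = \beta(G_{SR})$.

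To see this, I would start from the explicit description of the two graphs given right before the statement. By construction, $V(G_{SR})=\partial(G)$ and $V(G_{SR+I})=V(G)$, while both graphs have the same edge set, namely the pairs of vertices that are MMD in $G$. By the preceding remark, every endpoint of such an edge lies in $\partial(G)$. Hence $G_{SR+I}$ is obtained from $G_{SR}$ by adding the set $V(G)\setminus \partial(G)$ as a collection of isolated vertices, and no edge of $G_{SR+I}$ is incident with any of these added vertices.

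From here the argument is standard. Given any vertex cover $C$ of $G_{SR+I}$, the set $C\cap \partial(G)$ still covers every edge of $G_{SR+I}$ (because no edge is incident with a vertex outside $\partial(G)$), and this set is a vertex cover of $G_{SR}$ of size at most $|C|$; hence $\beta(G_{SR})\le \beta(G_{SR+I})$. Conversely, any vertex cover of $G_{SR}$ is automatically a vertex cover of $G_{SR+I}$, since the extra vertices of $G_{SR+I}$ bring no new edges to cover; therefore $\beta(G_{SR+I})\le \beta(G_{SR})$. Combining both inequalities yields $\beta(G_{SR+I}) = \beta(G_{SR})$, and together with Theorem \ref{th oellermann1} this gives the claimed identity $dim_s(G)=\beta(G_{SR})$.

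There is no real obstacle here: the content of the theorem was already established in \cite{Oellermann2007}, and the step from $G_{SR+I}$ to $G_{SR}$ is just the elementary observation that isolated vertices are irrelevant for a minimum vertex cover. The only point one must be careful about is verifying that removing the vertices of $V(G)\setminus \partial(G)$ really does not delete any edge of $G_{SR+I}$, which is guaranteed by the characterization of $\partial(G)$ in terms of MMD pairs stated earlier in the paper.
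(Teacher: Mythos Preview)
Your proof is correct and follows exactly the same approach as the paper: the paper simply remarks that ``it is readily seen that $\beta(G_{SR+I})=\beta(G_{SR})$'' and then invokes Theorem~\ref{th oellermann1}, which is precisely what you do, only with the elementary verification spelled out in full.
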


Figure \ref{transformation} illustrates this theorem, which has proved its high usefulness in several situations.

\begin{figure}[ht]
\centering
\begin{tikzpicture}
\draw(0,1.3) -- (1.1,0.4) -- (0.7,-1) -- (-0.7,-1) -- (-1.1,0.4) -- cycle;
\draw(-1.1,0.4) -- (-2.2,0.4);
\draw(1.1,0.4) -- (2,1.1);
\draw(1.1,0.4) -- (2,-0.3);

\filldraw[fill opacity=0.9,fill=white]  (0,1.3) circle (0.12cm);
\filldraw[fill opacity=0.9,fill=white]  (1.1,0.4) circle (0.12cm);
\filldraw[fill opacity=0.9,fill=white]  (0.7,-1) circle (0.12cm);
\filldraw[fill opacity=0.9,fill=white]  (-0.7,-1) circle (0.12cm);
\filldraw[fill opacity=0.9,fill=white]  (-1.1,0.4) circle (0.12cm);

\filldraw[fill opacity=0.9,fill=white]  (-2.2,0.4) circle (0.12cm);
\filldraw[fill opacity=0.9,fill=white]  (2,1.1) circle (0.12cm);
\filldraw[fill opacity=0.9,fill=white]  (2,-0.3) circle (0.12cm);

\node at (-2.2,2.2) {$G:$};

\node [above] at (0,1.4) {$a$};
\node [above] at (1.1,0.5) {$b$};
\node [right] at (2.1,1.1) {$c$};
\node [right] at (2.1,-0.3) {$d$};
\node [below] at (0.7,-1.1) {$e$};
\node [below] at (-0.7,-1.1) {$f$};
\node [above] at (-1.1,0.5) {$g$};
\node [left] at (-2.3,0.5) {$h$};


\draw(6,1.3) -- (7.1,0.4) -- (6.7,-1) -- (5.3,-1) -- (4.9,0.4) -- cycle; 
\draw(6.2,0) -- (4.9,0.4);
\draw(6.2,0) -- (5.3,-1);
\draw(6.2,0) -- (6,1.3);

\filldraw[fill opacity=0.9,fill=white]  (6,1.3) circle (0.12cm);
\filldraw[fill opacity=0.9,fill=white]  (7.1,0.4) circle (0.12cm);
\filldraw[fill opacity=0.9,fill=white]  (6.7,-1) circle (0.12cm);
\filldraw[fill opacity=0.9,fill=white]  (5.3,-1) circle (0.12cm);
\filldraw[fill opacity=0.9,fill=white]  (4.9,0.4) circle (0.12cm);
\filldraw[fill opacity=0.9,fill=white]  (6.2,0) circle (0.12cm);

\node at (4.9,2.2) {$G_{SR}:$};

\node [right] at (7.2,0.4) {$a$};
\node [left] at (4.8,0.4) {$c$};
\node [right] at (6.3,0) {$d$};
\node [below] at (6.7,-1.1) {$e$};
\node [above] at (6,1.4) {$f$};
\node [below] at (5.3,-1.1) {$h$};

\end{tikzpicture}
\caption{The set $\{a,c,d,h\}\subset V(G)$ forms a strong metric basis of $G$. Also, the set $\{a,c,d,h\}\subset V(G_{SR})$ is a vertex cover of $G_{SR}$. Thus, $dim_s(G)=\beta(G_{SR})=4$.}
\label{transformation}
\end{figure}
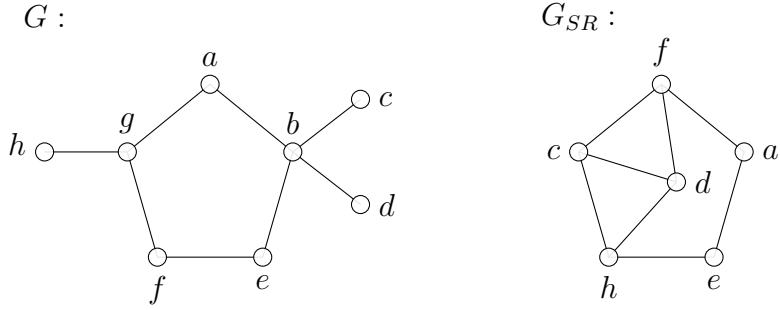

Recall that the largest cardinality of a set of vertices of $G$, no two of which are adjacent, is called the \emph{independence number} of $G$ and is denoted by $\alpha(G)$\label{g independence}. We refer to an $\alpha(G)$-set in a graph $G$ as an independent set of cardinality $\alpha(G)$. The following well-known result, due to Gallai \cite{Gallai1959}, states the relationship between the independence number and the vertex cover number of a graph.

\begin{theorem}{\em\cite{Gallai1959}}{\rm (Gallai, 1959)}\label{th gallai}
For any graph $G$ of order $n$, $\alpha(G)+\beta(G) = n.$
\end{theorem}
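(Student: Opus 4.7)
The plan is to establish a complementary bijection between vertex covers and independent sets: a subset $S \subseteq V(G)$ is a vertex cover of $G$ if and only if its complement $V(G) \setminus S$ is an independent set of $G$. Once this equivalence is in hand, the identity $\alpha(G) + \beta(G) = n$ falls out immediately by taking extremal sets on both sides.

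First I would prove the equivalence directly from the definitions. Suppose $S$ is a vertex cover, and take any two vertices $u, v \in V(G) \setminus S$. If $uv \in E(G)$, then this edge has no endpoint in $S$, contradicting the covering property. Hence $V(G) \setminus S$ is independent. Conversely, assume $I \subseteq V(G)$ is independent and let $uv$ be an arbitrary edge of $G$. Since $I$ contains no edge, at least one of $u, v$ must lie outside $I$, that is, in $V(G) \setminus I$. Thus $V(G) \setminus I$ is a vertex cover.

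Next I would use this equivalence to derive the numerical identity. Let $S^\ast$ be a minimum vertex cover of $G$, so $|S^\ast| = \beta(G)$. By the equivalence, $V(G) \setminus S^\ast$ is an independent set, which gives
\[
\alpha(G) \ge n - \beta(G).
\]
Similarly, let $I^\ast$ be a maximum independent set, so $|I^\ast| = \alpha(G)$. Then $V(G) \setminus I^\ast$ is a vertex cover, yielding
\[
\beta(G) \le n - \alpha(G),
\]
i.e., $\alpha(G) + \beta(G) \ge n$ and $\alpha(G) + \beta(G) \le n$. Combining both inequalities gives the desired equality.

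There is no substantial obstacle in this argument; the only subtlety is to be careful with the direction of each inequality and to recognize that both extremal sets produce matching bounds via the same complementation operation. The proof is short and relies only on the definitions of vertex cover and independent set together with the elementary fact that $|S| + |V(G) \setminus S| = n$.
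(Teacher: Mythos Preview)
Your proof is correct and is the standard argument for Gallai's identity. Note that the paper does not actually supply a proof of this statement; it is quoted as a classical result with a citation to Gallai's 1959 paper, so there is no in-paper proof to compare against.
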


Thus, for any graph $G$, by using Theorems \ref{th oellermann} and \ref{th gallai}, we immediately obtain that
\begin{equation*}
dim_s(G) = |\partial(G)| - \alpha(G_{SR}) \label{oellermann-gallai}
\end{equation*}

\section{Realization Problem}\label{SectionDetermination problem}

In this section we study the realization problem for some specific families of graphs, \emph{i.e.}, we study the graph equation $G_{SR}\cong H$ where $H$ is isomorphic to $K_n$, $K_{1,r}$, $C_n$, $P_n$ and $G^c$. In addition, the characterization problem of graphs of diameter two is considered.
We begin with the characterization of graphs whose strong resolving graph is complete. To this end, we need the following two lemmas.

\begin{lemma}[\cite{Muller2008,Muller2011}]\label{lem:extension}
Each shortest path in a graph $G$ extends to a shortest path between two boundary vertices.
\end{lemma}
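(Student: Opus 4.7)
The plan is to take a shortest path $P\colon u=v_0,v_1,\ldots,v_k=v$ in $G$ and grow it at both endpoints, one neighbor at a time, until it becomes a shortest path between two mutually maximally distant vertices (which are, by the Remark, exactly the boundary vertices).

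First I would extend at the $u$-end. If $u$ is not maximally distant from $v$, there is a neighbor $u'$ of $u$ with $d_G(u',v)>d_G(u,v)=k$, and hence $d_G(u',v)=k+1$. Prepending $u'$ to $P$ yields a walk of length $k+1$ from $u'$ to $v$, which must be a shortest $u'$--$v$ path. Iterating, and noting that $G$ is finite and the distance from $v$ is bounded by $D(G)$, this procedure terminates at a shortest path $P^*\colon u^*=w_0,w_1,\ldots,w_m=v$ such that $u^*$ is maximally distant from $v$ and $P^*$ still contains the original $P$ as a subpath.

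Next I would apply the symmetric process at the $v$-end of $P^*$, growing shortest paths away from $u^*$ by choosing, at each stage, a neighbor of the current endpoint whose distance from $u^*$ strictly increases. The key auxiliary fact—already isolated in the proof of the Remark preceding this lemma—is that if $u^*$ is maximally distant from some vertex $z$ and $z'$ is a neighbor of $z$ with $d_G(u^*,z')=d_G(u^*,z)+1$, then $u^*$ remains maximally distant from $z'$. Applying this inductively along the extension, the property ``$u^*$ is maximally distant from the current endpoint'' is preserved. Finiteness again forces termination at some vertex $v^*$ which is maximally distant from $u^*$; by the inductive invariant, $u^*$ is maximally distant from $v^*$ as well, so $u^*$ and $v^*$ are MMD and therefore both lie in $\partial(G)$.

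The resulting path from $u^*$ to $v^*$ is a shortest path (at every extension step the length grew by exactly $1$ and matched the distance between the new endpoints) and contains $P$ as a subpath, which is what was required. The only delicate point, and the one I would highlight as the main obstacle, is verifying that the second round of extensions does not destroy the maximal-distance property established in the first round; this is settled precisely by the ``neighbor-preserves-maximally-distant'' observation quoted above, so no further technical work is needed.
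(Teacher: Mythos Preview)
Your argument is correct. Note, however, that the paper does not supply its own proof of this lemma: it is quoted from \cite{Muller2008,Muller2011} without proof, so there is no in-paper argument to compare against. Your two-stage extension---first grow the $u$-end until $u^*$ is maximally distant from $v$, then grow the $v$-end while maintaining the invariant that $u^*$ stays maximally distant from the current endpoint---is exactly the standard proof, and the invariant is justified by the one-line triangle-inequality computation $d(z',w)\le 1+d(z,w)\le 1+d(u^*,z)=d(u^*,z')$ for each neighbor $w$ of $u^*$, which is the same mechanism already exploited in the proof of the preceding Remark.
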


\begin{lemma}\label{lem:no_extreme}
Let $G$ be a graph and let $v\in \partial(G)\setminus \sigma(G)$. Then there exist $a,b\in \partial(G)\setminus\{v\}$ such that $va, vb$ are not edges of $G_{SR}$.
\end{lemma}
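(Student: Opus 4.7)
The plan is to exploit the non-simpliciality of $v$ to produce a length-$2$ shortest path through $v$, then use Lemma \ref{lem:extension} to extend it to a shortest path between two boundary vertices; those two endpoints will be the desired $a$ and $b$.

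First, since $v\in\partial(G)\setminus\sigma(G)$, the subgraph induced by $N_G(v)$ is not complete, so one can choose $x,y\in N_G(v)$ with $xy\notin E(G)$. Then $d_G(x,y)=2$, because $x\sim v\sim y$ realises distance $2$ while $xy$ itself is not an edge; consequently, $x\mbox{-}v\mbox{-}y$ is a shortest path in $G$. Apply Lemma \ref{lem:extension} to extend this path to a shortest path
\[
P:\; a=u_0\mbox{-}u_1\mbox{-}\cdots\mbox{-}u_k=b
\]
between two boundary vertices $a,b\in\partial(G)$, with $x,v,y$ appearing in that order as three consecutive internal vertices of $P$. Because $v$ has a predecessor ($x$) and a successor ($y$) on $P$, the vertex $v$ is strictly interior to $P$; in particular $a\neq v$, $b\neq v$, and $a\neq b$.

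It remains to verify that $va\notin E(G_{SR})$ and $vb\notin E(G_{SR})$. Since $P$ is a shortest $a\mbox{-}b$ path passing through $v$ before reaching $y$, concatenating the initial subpath of $P$ from $a$ to $v$ with the edge $vy$ shows $d_G(y,a)\le d_G(v,a)+1$; conversely, any walk of length $\le d_G(v,a)$ from $y$ to $a$ followed by the edge $yv$ would give a walk of length $\le d_G(v,a)+1$ from $v$ to $a$ going through $y$, but as $y$ is the vertex immediately after $v$ on the shortest $a\mbox{-}b$ path, optimality of $P$ forces the equality $d_G(y,a)=d_G(v,a)+1$. Thus $y\in N_G(v)$ witnesses that $v$ is not maximally distant from $a$, so $v$ and $a$ fail to be MMD and $va\notin E(G_{SR})$. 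The symmetric argument with $x$ in place of $y$ (and $b$ in place of $a$) yields $vb\notin E(G_{SR})$.

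There is no serious obstacle; the only point that requires a moment of care is ensuring $a\neq b$ and that both lie in $\partial(G)\setminus\{v\}$, which is immediate from the fact that the extended path has length at least $2$ and $v$ occupies an interior position on it.
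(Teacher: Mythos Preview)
Your proof is correct and follows essentially the same approach as the paper's: pick two non-adjacent neighbours of $v$, form the length-$2$ shortest path through $v$, extend via Lemma~\ref{lem:extension} to a shortest $a$--$b$ path with $a,b\in\partial(G)$, and observe that $v$, being interior, cannot be maximally distant from either endpoint. The paper dispatches the last step in one line (``$v$ lies on a shortest path between $a$ and $b$, so $v$ is not maximally distant from either''), whereas you spell it out; your intermediate sentence about walks from $y$ to $a$ is a bit tangled, but the clean justification is simply that subpaths of shortest paths are shortest, so $d_G(a,y)=d_G(a,v)+1$.
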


\begin{proof}
Let $v_1,v_2\in N(v)$ be such that $d(v_1,v_2)=2$. Then  $P=v_1 v v_2$ is a shortest path and, by Lemma~\ref{lem:extension}, there exist $a,b\in \partial (G)\setminus\{v\}$ and a shortest path between them that extends $P$. So $v$ lays in a shortest path between $a$ and $b$ and, in particular, $v$ is not maximally distance from any of them. This means that $v$ is not a neighbor of $a$ nor $b$ in $G_{SR}$.
\end{proof}

With these tools we obtain the following characterization.

\begin{theorem}\label{theorem:complete}
Let $G$ be a connected graph. Then $G_{SR}\cong K_{|\partial(G)|}$ if and only if $\partial(G)=\sigma(G)$.
\end{theorem}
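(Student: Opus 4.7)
The plan is to handle the two implications separately, with both directions being relatively short given the preparatory lemmas.

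For the easier direction $(\Leftarrow)$, I would assume $\partial(G)=\sigma(G)$ and show that any two distinct boundary vertices $u,v$ are MMD. The key observation is that a simplicial vertex $v$ is maximally distant from every other vertex $u$: pick any shortest $u$-$v$ path, which ends with some neighbor $w'\in N(v)$, so $d(u,v)=d(u,w')+1$. For an arbitrary $w\in N(v)$, either $w=w'$, giving $d(u,w)<d(u,v)$, or $w\neq w'$ in which case $w$ and $w'$ are adjacent (since $N(v)$ induces a clique), so $d(u,w)\le d(u,w')+1=d(u,v)$. Hence $v\in M_G(u)$. By symmetry (both $u$ and $v$ are simplicial), $u\in M_G(v)$ as well, so $uv$ is an edge of $G_{SR}$. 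Since this holds for every pair in $\partial(G)=\sigma(G)$, we conclude $G_{SR}\cong K_{|\partial(G)|}$.

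For the harder direction $(\Rightarrow)$, I would argue by contradiction. Suppose $G_{SR}\cong K_{|\partial(G)|}$ but there exists some $v\in \partial(G)\setminus \sigma(G)$. Then Lemma~\ref{lem:no_extreme} yields vertices $a,b\in \partial(G)\setminus\{v\}$ such that $va$ and $vb$ are not edges of $G_{SR}$. This directly contradicts the assumption that $G_{SR}$ restricted to $\partial(G)$ is complete (one non-edge already suffices). Therefore $\partial(G)\setminus \sigma(G)=\emptyset$, and combined with the already-noted inclusion $\sigma(G)\subseteq \partial(G)$ this gives $\partial(G)=\sigma(G)$.

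There is essentially no obstacle here once Lemmas~\ref{lem:extension} and~\ref{lem:no_extreme} are granted; the reverse direction is a one-line application of Lemma~\ref{lem:no_extreme}, and the forward direction only requires the standard fact that simplicial vertices are maximally distant from every other vertex, which follows immediately from the clique structure on their neighborhood. The only minor sanity check to record is the degenerate case where a simplicial vertex $v$ has a single neighbor (so $N(v)$ is trivially a clique); the same argument goes through unchanged.
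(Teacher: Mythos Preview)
Your proof is correct and follows essentially the same approach as the paper: the $(\Rightarrow)$ direction is exactly the paper's contrapositive argument via Lemma~\ref{lem:no_extreme}, and your $(\Leftarrow)$ direction simply spells out in detail what the paper records as ``it is clear'' (this fact is also stated in Observation~\ref{observation1}(a)).
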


\begin{proof}
If $\partial(G)=\sigma(G)$, it is clear that $G_{SR}\cong K_{|\partial(G)|}$. Conversely, assume now that $\sigma(G)\varsubsetneq \partial(G)$ and let $v\in \partial(G)\setminus \sigma(G)$. By Lemma~\ref{lem:no_extreme}, there exist $a \in \partial(G)\setminus\{v\}$ such that $v$ is not a neighbor of $a$ in $G_{SR}$, so $G_{SR}$ is not a complete graph.
\end{proof}

If  $G$ is a connected graph  of order $n$, then $\sigma(G)=V(G)$ if and only if $G\cong K_n$. Hence, the following result is a direct consequence of  Theorem \ref{theorem:complete}.


\begin{corollary}
Let $G$ be a connected graph of order $n\ge 2$. Then $G_{SR}\cong K_n$ if and only if $G\cong K_n$.
\end{corollary}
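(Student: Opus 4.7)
The plan is to derive this corollary directly from Theorem~\ref{theorem:complete} together with the simple graph-theoretic fact highlighted in the sentence immediately preceding the statement, namely that among connected graphs of order $n$, the only one in which every vertex is simplicial is $K_n$ itself. So the task decomposes into two very short implications.

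For the forward direction, I would assume $G\cong K_n$. Then every vertex of $G$ has its open neighborhood inducing a complete graph, so $\sigma(G)=V(G)$. Since $\sigma(G)\subseteq \partial(G)\subseteq V(G)$, we also get $\partial(G)=V(G)$, hence $\partial(G)=\sigma(G)$ and $|\partial(G)|=n$. Applying Theorem~\ref{theorem:complete} yields $G_{SR}\cong K_{|\partial(G)|}=K_n$.

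For the reverse direction, I would assume $G_{SR}\cong K_n$. Since the vertex set of $G_{SR}$ is $\partial(G)$, this forces $|\partial(G)|=n$, i.e.\ $\partial(G)=V(G)$. Then $G_{SR}\cong K_{|\partial(G)|}$, so by Theorem~\ref{theorem:complete} we have $\sigma(G)=\partial(G)=V(G)$. It remains to conclude that a connected graph in which every vertex is simplicial must be complete: take any two vertices $u,v$ and a shortest $u$--$v$ path $u=u_0,u_1,\dots,u_k=v$; if $k\ge 2$ then $u_0,u_2\in N_G(u_1)$, and since $u_1$ is simplicial we would have $u_0u_2\in E(G)$, contradicting the minimality of the path. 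Hence $k\le 1$, so $G\cong K_n$.

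There is essentially no obstacle here: the nontrivial content has already been absorbed into Theorem~\ref{theorem:complete}, and what remains is the elementary observation on simplicial vertices in connected graphs. The only point that requires a moment of care is the reverse direction, where one must translate $G_{SR}\cong K_n$ into the equality $\partial(G)=V(G)$ before invoking Theorem~\ref{theorem:complete}; this is why the hypothesis involves $K_n$ rather than $K_{|\partial(G)|}$.
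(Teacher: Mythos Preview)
Your proof is correct and follows exactly the approach the paper intends: the paper states (without further argument) that $\sigma(G)=V(G)$ if and only if $G\cong K_n$, and declares the corollary a direct consequence of Theorem~\ref{theorem:complete}. You have simply filled in the routine details, including a short proof of that simplicial-vertex fact.
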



Another particular case of Theorem \ref{theorem:complete} can be deduced from the next lemma. We recall that a {\em cut vertex} in a graph $G$ is a vertex when removed (together with its adjacent edges) from $G$ results in a new graph with increased number of connected components.

\begin{lemma}{\rm \cite{Rodriguez-Velazquez2016}}\label{LemmaCut}
Let $G$ be a connected graph. If $v$ is a cut vertex of $G$, then $v\not \in \partial(G)$.
\end{lemma}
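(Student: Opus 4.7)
The plan is to prove the contrapositive by assuming $v$ is a cut vertex and $v\in \partial(G)$, then deriving a contradiction with the definition of maximal distance. By the Remark characterizing the boundary, if $v\in\partial(G)$ there exists some $w\in V(G)\setminus\{v\}$ such that $v$ is maximally distant from $w$; concretely, for every neighbor $x\in N_G(v)$ one has $d_G(w,x)\le d_G(w,v)$.

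Next I would use the cut vertex hypothesis to exhibit a neighbor of $v$ that violates this inequality. Since $v$ is a cut vertex, the graph $G-v$ has at least two connected components. Let $C$ be the component of $G-v$ containing $w$ (if $w$ happens to be an isolated vertex of $G-v$, we still take $C=\{w\}$), and let $C'\neq C$ be any other component. Pick a vertex $u\in N_G(v)\cap V(C')$; such a vertex exists because $v$ must have at least one neighbor in every component of $G-v$ (otherwise that component would not be connected to $v$ in $G$, contradicting connectedness of $G$).

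The key observation is then that every $w$--$u$ path in $G$ must pass through $v$, because $w$ and $u$ lie in different components of $G-v$. Consequently
\[
d_G(w,u)=d_G(w,v)+d_G(v,u)=d_G(w,v)+1>d_G(w,v),
\]
which contradicts the fact that $v$ is maximally distant from $w$ (since $u$ is a neighbor of $v$). Hence no cut vertex can belong to $\partial(G)$.

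The argument is essentially immediate once the Remark characterization of $\partial(G)$ via maximal distance is in hand, so I do not anticipate any serious obstacle; the only mild care needed is the case analysis for where $w$ may lie and the justification that $v$ has neighbors in every component of $G-v$, both of which are routine consequences of $G$ being connected.
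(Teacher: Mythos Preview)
Your proof is correct. The paper does not include its own proof of this lemma, merely citing \cite{Rodriguez-Velazquez2016}, so there is no argument to compare against; the approach you give is the standard and natural one. One minor remark: you do not really need the Remark about MMD vertices, since the very definition of $\partial(G)$ already says that $v\in\partial(G)$ if and only if $v$ is maximally distant from some vertex $w$, and that is all you use.
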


\begin{proposition}
Let $G$ be a connected graph and let $\varepsilon(G)$ be the number of vertices of degree one. If every vertex of degree greater than one is a cut vertex of $G$, then $G_{SR}\cong K_{\varepsilon(G)}$.
\end{proposition}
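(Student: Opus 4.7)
The plan is to reduce the statement to Theorem~\ref{theorem:complete} by showing that, under the hypothesis, the boundary, the set of simplicial vertices, and the set of leaves all coincide. Once we know $\partial(G)=\sigma(G)$ and $|\partial(G)|=\varepsilon(G)$, Theorem~\ref{theorem:complete} gives immediately that $G_{SR}\cong K_{\varepsilon(G)}$.

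First I would record the three easy inclusions. Let $L$ denote the set of leaves of $G$, so $|L|=\varepsilon(G)$. Any leaf $v$ has $N_G(v)=\{u\}$, and the subgraph induced by a single vertex is trivially complete, so $L\subseteq \sigma(G)$. From the remarks preceding the proposition we already have $\sigma(G)\subseteq \partial(G)$. The final, and key, inclusion is $\partial(G)\subseteq L$: if $v\in \partial(G)$, then by Lemma~\ref{LemmaCut} the vertex $v$ is not a cut vertex of $G$. Since $G$ is connected (so $\delta_G(v)\ge 1$ whenever $|V(G)|\ge 2$), the hypothesis that every vertex of degree greater than one is a cut vertex forces $\delta_G(v)=1$, i.e., $v\in L$. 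Chaining,
\[
L\subseteq \sigma(G)\subseteq \partial(G)\subseteq L,
\]
so all three sets are equal.

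The proof is then finished by invoking Theorem~\ref{theorem:complete}, which yields $G_{SR}\cong K_{|\partial(G)|}=K_{\varepsilon(G)}$. The only tiny case to handle separately is the trivial graph on one vertex, where $\varepsilon(G)=0$ and both sides vanish; for connected $G$ of order at least two the argument above applies directly.

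I do not expect a genuine obstacle here: the statement is really just the composition of Lemma~\ref{LemmaCut} and Theorem~\ref{theorem:complete}. The most delicate point is making sure one does not inadvertently need the converse fact that a simplicial vertex of degree at least two cannot be a cut vertex; the argument above avoids this by using $\sigma(G)\subseteq\partial(G)$ together with Lemma~\ref{LemmaCut}, so that the hypothesis is applied only to boundary vertices rather than to simplicial ones.
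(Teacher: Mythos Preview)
Your proof is correct and follows exactly the route the paper intends: the proposition is presented there without proof, immediately after Lemma~\ref{LemmaCut}, with the remark that it is a particular case of Theorem~\ref{theorem:complete} deducible from that lemma. Your chain $L\subseteq\sigma(G)\subseteq\partial(G)\subseteq L$ makes this deduction explicit, and the handling of the trivial one-vertex case is a harmless extra.
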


In order to present  the next result we need to introduce some more terminology. Given a graph  $G$, we  define $G^*$ as the graph with vertex set $V(G^*)=V(G)$ such that  two vertices $u,v$ are adjacent in $G^*$ if and only if either $d_G(u,v)\ge 2$ or  $u,v$ are true twins.  If a graph $G$ has at least one isolated vertex, then we denote by $G_-$ the graph obtained from $G$ by removing all its isolated vertices. In this sense, $G^*_-$ is obtained from $G^*$ by removing all its isolated vertices.
Notice  that if $G$ is true twin-free, then $G^*\cong G^c$.

\begin{proposition}\label{proposition diameter two}
For any graph $G$ of diameter two, $G_{SR}\cong G^*_-$.
\end{proposition}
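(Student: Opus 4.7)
The plan is to show that two vertices are adjacent in $G_{SR+I}$ if and only if they are adjacent in $G^*$, from which the claim follows immediately since removing isolated vertices on both sides gives $G_{SR} \cong G^*_-$. Both graphs have vertex set $V(G)$, so the core task is to identify the MMD relation in a diameter-two graph with the adjacency of $G^*$.

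First, I would characterize MMD pairs in a graph $G$ with $D(G)=2$. For distinct $u,v\in V(G)$, the distance $d_G(u,v)$ is either $1$ or $2$. If $d_G(u,v)=2$, then for every $w\in N_G(u)$ we have $d_G(v,w)\le 2 = d_G(u,v)$, and symmetrically for $v$, so $u$ and $v$ are automatically MMD. If $d_G(u,v)=1$, then $u$ is maximally distant from $v$ iff every neighbor $w$ of $u$ satisfies $d_G(v,w)\le 1$, i.e.\ $N_G(u)\subseteq N_G[v]$, which (adding $u\in N_G[v]$) is equivalent to $N_G[u]\subseteq N_G[v]$. Similarly $v$ is maximally distant from $u$ iff $N_G[v]\subseteq N_G[u]$. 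Hence adjacent vertices are MMD iff $N_G[u]=N_G[v]$, i.e.\ iff they are true twins.

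Combining the two cases, $u$ and $v$ are MMD in $G$ if and only if either $d_G(u,v)\ge 2$ (here equivalent to $d_G(u,v)=2$) or $u,v$ are true twins. This is exactly the adjacency rule defining $G^*$. Therefore the graphs $G_{SR+I}$ and $G^*$ coincide as graphs on the common vertex set $V(G)$.

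Finally, I would observe that a vertex $v\in V(G)$ is isolated in $G_{SR+I}$ iff it is not MMD with anyone iff $v\notin\partial(G)$; by the equivalence above, this also matches exactly the vertices isolated in $G^*$. Removing isolated vertices from both sides therefore yields $G_{SR}\cong G^*_-$. The only delicate step is the analysis of the $d_G(u,v)=1$ case, where one must be careful to use closed rather than open neighborhoods and to recognize that the mutual containment of neighborhoods forces the true-twin condition; everything else is essentially bookkeeping.
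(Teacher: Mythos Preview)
Your proof is correct and follows essentially the same approach as the paper: both arguments split into the cases $d_G(u,v)=2$ and $d_G(u,v)=1$, showing in the first case that the pair is automatically MMD and in the second that MMD is equivalent to $N_G[u]=N_G[v]$, which yields exactly the adjacency rule of $G^*$. Your version is somewhat more explicit about the closed-neighborhood containment step and about passing from $G_{SR+I}$ to $G_{SR}$ by deleting isolated vertices, but the underlying argument is identical.
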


\begin{proof}
Assume that $G$ has diameter two and let $u,v$ be two different vertices of $G$. If $u\not\sim v$ or $N_G[u]=N_G[v]$, then $u$ and $v$ are MMD in $G$. Now, if $u\sim v$ and $N_G[u]\ne N_G[v]$, then there exists, $w\in V(G)\setminus \{u,v\}$ such that either ($w\sim u$ and $w\not\sim v$) or ($w\not \sim u$ and $w\sim v$), which implies that $u$ and $v$ are not MMD. Therefore, the result follows.
\end{proof}

\begin{theorem}{\rm \cite{EstrGarcRamRodr2015}}\label{GEqualGcSR}
Let $G$ be a connected graph. Then $G_{SR}\cong G^c$ if and only if  $D(G)=2$ and $G$ is a true twin-free graph.
\end{theorem}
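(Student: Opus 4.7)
The plan is to deduce both directions from Proposition \ref{proposition diameter two} combined with the observation in the paper that $G^*\cong G^c$ whenever $G$ is true twin-free.

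For the direction $(\Leftarrow)$, I would assume $D(G)=2$ and $G$ is true twin-free. Proposition \ref{proposition diameter two} immediately yields $G_{SR}\cong G^*_-$, and twin-freeness identifies $G^*$ with $G^c$, giving $G_{SR}\cong (G^c)_-$. To conclude $G_{SR}\cong G^c$, I would then argue that $G^c$ has no isolated vertex under these hypotheses; equivalently, that $G$ has no universal vertex. Twin-freeness forces such a vertex to be unique, and a short check shows that it is not maximally distant from anyone, so it already lies outside $\partial(G)$ and is absent from the vertex set of $G_{SR}$.

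For the direction $(\Rightarrow)$, I would assume $G_{SR}\cong G^c$ and first note that matching vertex counts force $\partial(G)=V(G)$. To establish $D(G)=2$, I would argue by contradiction: if $D(G)\ge 3$, pick a shortest path $w_0 w_1 w_2 w_3$ in $G$. Then $w_0 w_2\notin E(G)$, so $w_0 w_2\in E(G^c)$; yet $w_3\in N(w_2)$ satisfies $d(w_0,w_3)=3>2=d(w_0,w_2)$, showing that $w_2$ is not maximally distant from $w_0$, whence $w_0 w_2\notin E(G_{SR})$. Combined with the fact that every edge of $G_{SR}$ is a non-edge of $G$---valid once we know $G$ has no true twins---this would give $|E(G_{SR})|<|E(G^c)|$, contradicting the isomorphism. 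For true twin-freeness, I would note that a true twin pair $u,v$ of $G$ is automatically a true twin pair of $G_{SR}$ (they are MMD and share all other MMD partners), so the isomorphism must send it to a true twin pair of $G^c$; unwinding the definition shows that the latter corresponds to a false twin pair of $G$, and a bookkeeping of the twin-equivalence classes on each side of the isomorphism yields the contradiction.

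The main obstacle is the twin-freeness step of $(\Rightarrow)$: a single true twin pair does not immediately break the isomorphism, because it is transported to a false twin pair of $G$ rather than to something structurally incompatible. The cleanest fix is to account simultaneously for all twin classes of $G_{SR}$ and $G^c$; once $G$ is shown to be true twin-free, the diameter conclusion follows cleanly from the edge-count argument sketched above, and the theorem reduces to Proposition \ref{proposition diameter two}.
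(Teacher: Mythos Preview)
Your $(\Leftarrow)$ step about universal vertices is backwards. You need $(G^c)_-=G^c$, i.e., that $G$ has no universal vertex; instead you argue that such a vertex would lie outside $\partial(G)$, which only confirms that $|V(G_{SR})|<|V(G)|=|V(G^c)|$ and \emph{blocks} the isomorphism rather than establishing it. And the hypotheses do not in fact exclude a universal vertex: $P_3$ has diameter two, is true twin-free, yet $(P_3)_{SR}\cong K_2\not\cong K_2\cup K_1\cong (P_3)^c$. The paper's proof of this direction has the same blind spot---it asserts $G_{SR}\cong G^c$ immediately after showing that $u,v$ are MMD iff $d_G(u,v)=2$, which is really the statement $G_{SR+I}=G^c$ as labelled graphs. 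So you have correctly located a genuine issue, but your proposed fix does not repair it.

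Your difficulties in $(\Rightarrow)$ come from treating $\cong$ as abstract isomorphism. The paper does not: it argues throughout as if the identity on $V(G)$ is the isomorphism, so that a \emph{given} pair $u,v$ is MMD in $G$ iff $uv\in E(G^c)$. Under that reading the proof is direct: diametral vertices are MMD, hence non-adjacent, so $D(G)\ge 2$; if $D(G)\ge 3$, a neighbour $w$ of $v$ at distance $D(G)-1\ge 2$ from $u$ is non-adjacent to $u$ but not MMD with $u$, a contradiction; and a true twin pair would be MMD yet adjacent, again a contradiction. Each step pins down a specific pair, so the twin-class bookkeeping you flag as the main obstacle never arises. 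If you insist on abstract isomorphism, your edge-count argument for the diameter is sound once twin-freeness is in hand (adjacent MMD vertices are indeed true twins, so $E(G_{SR})\subseteq E(G^c)$ and the missing edge $w_0w_2$ gives $|E(G_{SR})|<|E(G^c)|$), but the twin-freeness step you sketch is not yet a proof: a true twin pair of $G$ mapping, via the isomorphism, to a false twin pair of $G$ is no contradiction by itself, and the ``bookkeeping of twin-equivalence classes'' would have to be made explicit before it carries any weight.
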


\begin{proof}
Assume that $G_{SR}\cong G^c=(V,E)$, and let $u,v\in V$ be two diametral vertices in $G$. Since $u$ and $v$ are MMD in $G$ and $G_{SR}\cong G^c$, we obtain that $u$ and $v$ are adjacent in $G^c$ and, as a result, $D(G)=d_{G}(u,v)\ge 2$. Now, suppose that $d_{G}(u,v)>2$. Then  there exists $w\in N_{G}(v)-N_{G}(u)$ such that $d_{G}(u,w)=D(G)-1\ge 2$. Hence,  $w$ and $u$ are not MMD in $G$ and $w\in N_G(u)$, which contradicts the fact that $G_{SR}\cong G^c$. Therefore,  $D(G)=2$. Now assume that there exists two vertices $x$ and $y$ which are true twins in $G$. We have that $x$ and $y$ are false twins in $G^c$ and, as a result,  they are not adjacent in  $G^c$ and they are MMD in $G$, which contradicts  the fact that $G_{SR}\cong G^c$. Therefore, $G$ is a true twin-free graph.

On the other hand, if $G=(V,E)$ is a true twin-free graph and $D(G)=2$, then two vertices $u,v$ are MMD in $G$ if and only if $d_{G}(u,v)=2$. Therefore, $G_{SR}\cong G^c$.
\end{proof}

We next show that star graphs and complete bipartite graphs $K_{2,r}$ are not realizable as the strong resolving graph of any graph.

\begin{proposition}\label{stars-as-SRG}
Let $G$ be a connected graph of order $n\ge 2$ and let $r\ge 1$ be an integer. Then the following statement hold.
\begin{itemize}
\item $G_{SR}\cong K_{1,r}$ if and only if $G\cong P_n$ and $r=1$.
\item The graph equation $G_{SR}\cong K_{2,r}$ has no solution.
\end{itemize}
\end{proposition}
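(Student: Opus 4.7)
The plan is to dispatch both bullets via short arguments combining Theorem~\ref{th oellermann} and Lemma~\ref{lem:no_extreme}, together with the standard fact (implicit in Observation~\ref{observation1}(a) and in the proof of Theorem~\ref{theorem:complete}) that any simplicial vertex of $G$ is maximally distant from every other vertex, so in particular any two distinct simplicial vertices of $G$ are MMD.

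For the first statement, sufficiency is immediate from Observation~\ref{observation1}(a): since $P_n$ is a tree with $l(P_n)=2$, one has $(P_n)_{SR}\cong K_2\cong K_{1,1}$, giving $r=1$. For necessity, the center of $K_{1,r}$ covers every edge, so $\beta(G_{SR})=\beta(K_{1,r})=1$. By Theorem~\ref{th oellermann}, $dim_s(G)=1$, and then Observation~\ref{values-sdim-basic}(a) forces $G\cong P_n$ for some $n\ge 2$. Applying Observation~\ref{observation1}(a) to this $P_n$ gives $G_{SR}\cong K_{1,1}$, and comparing orders of $K_{1,r}$ and $K_{1,1}$ forces $r=1$.

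For the second statement, the case $r=1$ follows from the first bullet because $K_{2,1}\cong K_{1,2}$ and $2\ne 1$. For $r\ge 2$, suppose toward contradiction that $G_{SR}\cong K_{2,r}$ and let $\{u_1,u_2\}$ denote the part of size two. Since $u_1$ and $u_2$ are non-adjacent in $G_{SR}$, they are not MMD in $G$, so by the fact above at least one of them is non-simplicial; say $u_1\in\partial(G)\setminus\sigma(G)$. Lemma~\ref{lem:no_extreme} then yields two distinct vertices of $\partial(G)\setminus\{u_1\}$ non-adjacent to $u_1$ in $G_{SR}$. However, in $K_{2,r}$ the only vertex non-adjacent to $u_1$ other than itself is $u_2$, giving only one such vertex, a contradiction.

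I do not anticipate a serious obstacle; the main insight is just to identify the right tool for each case. For the first bullet, recognizing $\beta(K_{1,r})=1$ collapses the problem onto the already-known classification of graphs with $dim_s=1$, bypassing any direct MMD analysis. For the second bullet, the non-adjacency of the two apex vertices $u_1,u_2$ in $G_{SR}$ is precisely what simultaneously forces non-simpliciality (so that Lemma~\ref{lem:no_extreme} applies) and supplies only a single non-neighbor, producing the numerical contradiction.
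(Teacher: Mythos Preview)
Your proof is correct and follows essentially the same approach as the paper. For the first bullet both arguments compute $\beta(K_{1,r})=1$ and invoke Observation~\ref{values-sdim-basic}(a); for the second bullet both use that simplicial vertices induce a clique in $G_{SR}$ (so the two non-adjacent vertices on the small side cannot both be simplicial) and then apply Lemma~\ref{lem:no_extreme} to a non-simplicial vertex of that side to obtain the numerical contradiction with its unique non-neighbor.
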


\begin{proof}

Obviously, $(P_n)_{SR}\cong K_2\cong K_{1,1}$. Now, if $G_{SR}\cong K_{1,r}$, then $\dim_s(G)=\beta(G_{SR})=1$, which implies that $G\cong P_n$, by Observation \ref{values-sdim-basic} (a), and so $r=1$. Therefore, the first statement holds.

Now, assume that $G_{SR}$ is a complete bipartite graph $(U_1\cup U_2,E)$, where $|U_1|,|U_2|\ge 2$. Since the subgraph of  $G_{SR}$ induced by $\sigma(G)$ is a clique, $|U_1\cap \sigma(G)|\le 1$ and $|U_2\cap \sigma(G)|\le 1$.
Hence, Lemma \ref{lem:no_extreme} immediately leads to $|U_1|\ge 3$ and $|U_2|\ge 3$, which implies that the graph equation $G_{SR}\cong K_{2,r}$ has no solution.
\end{proof}

It is worth mentioning that, concerning the result before, although no star graph $K_{1,r}$, $r\ge 2$, is a strong resolving graph, there are graphs $G$ for which $G_{SR}$ contains a component isomorphic to a star graph $K_{1,r}$ for any $r\ge 2$. To see this, consider the following family $\mathcal{F}$ of graphs $G_r$ constructed in the following way, that was already presented in \cite{Kang2016A}.
\begin{itemize}
  \item Consider $r+1$ paths $a_ib_ic_i$ with $i\in\{0,\ldots,r\}$.
  \item Add the edges $a_ia_0$, $b_ib_0$ and $c_ic_0$ for every $i\in\{1,\ldots,r\}$.
  \item Add a vertex $x$ and the edges $xa_0$ and $xc_0$.
\end{itemize}

An example of a graph in $\mathcal{F}$ and its strong resolving graph is given in Figure \ref{G_4}.

\begin{figure}[ht]
\centering
\begin{tikzpicture}[scale=.6, transform shape]
\node [draw, shape=circle] (b0) at  (0,0) {};
\node [draw, shape=circle] (c0) at  (4,0) {};
\node [draw, shape=circle] (c1) at  (3,2) {};
\node [draw, shape=circle] (c2) at  (3,4) {};
\node [draw, shape=circle] (c3) at  (5,3) {};
\node [draw, shape=circle] (c4) at  (5,5) {};
\node [draw, shape=circle] (b3) at  (1,3) {};
\node [draw, shape=circle] (b4) at  (1,5) {};
\node [draw, shape=circle] (a0) at  (-4,0) {};
\node [draw, shape=circle] (a1) at  (-5,2) {};
\node [draw, shape=circle] (a2) at  (-5,4) {};
\node [draw, shape=circle] (a3) at  (-3,3) {};
\node [draw, shape=circle] (a4) at  (-3,5) {};
\node [draw, shape=circle] (b1) at  (-1,2) {};
\node [draw, shape=circle] (b2) at  (-1,4) {};
\node [draw, shape=circle] (x) at  (0,-2) {};
\node [draw, shape=circle] (a11) at  (8.5,-2) {};
\node [draw, shape=circle] (a22) at  (8.5,0.5) {};
\node [draw, shape=circle] (a33) at  (8.5,3) {};
\node [draw, shape=circle] (a44) at  (8.5,5.5) {};
\node [draw, shape=circle] (c11) at  (11.5,-2) {};
\node [draw, shape=circle] (c22) at  (11.5,0.5) {};
\node [draw, shape=circle] (c33) at  (11.5,3) {};
\node [draw, shape=circle] (c44) at  (11.5,5.5) {};
\node [draw, shape=circle] (x1) at  (13.3,1.75) {};
\node [draw, shape=circle] (b11) at  (15,-2) {};
\node [draw, shape=circle] (b22) at  (15,0.5) {};
\node [draw, shape=circle] (b33) at  (15,3) {};
\node [draw, shape=circle] (b44) at  (15,5.5) {};
\node [scale=1.4] at (7.8,-2) {$a_1$};
\node [scale=1.4] at (7.8,0.5) {$a_2$};
\node [scale=1.4] at (7.8,3) {$a_3$};
\node [scale=1.4] at (7.8,5.5) {$a_4$};
\node [scale=1.4] at (12.2,-2) {$c_1$};
\node [scale=1.4] at (12.2,0.5) {$c_2$};
\node [scale=1.4] at (12.2,3) {$c_3$};
\node [scale=1.4] at (12.2,5.5) {$c_4$};
\node [scale=1.4] at (12.8,1.75) {$x$};
\node [scale=1.4] at (15.7,-2) {$b_1$};
\node [scale=1.4] at (15.7,0.5) {$b_2$};
\node [scale=1.4] at (15.7,3) {$b_3$};
\node [scale=1.4] at (15.7,5.5) {$b_4$};
\node [scale=1.4] at (0.7,-2.2) {$x$};
\node [scale=1.4] at (-4.2,-0.7) {$a_0$};
\node [scale=1.4] at (0,-0.7) {$b_0$};
\node [scale=1.4] at (4.2,-0.7) {$c_0$};
\node [scale=1.4] at (-5.2,1.3) {$a_1$};
\node [scale=1.4] at (-1.2,1.3) {$b_1$};
\node [scale=1.4] at (2.8,1.3) {$c_1$};
\node [scale=1.4] at (-2.5,2.5) {$a_4$};
\node [scale=1.4] at (1.5,2.5) {$b_4$};
\node [scale=1.4] at (5.5,2.5) {$c_4$};
\node [scale=1.4] at (-5,4.55) {$a_2$};
\node [scale=1.4] at (-1,4.55) {$b_2$};
\node [scale=1.4] at (3,4.55) {$c_2$};
\node [scale=1.4] at (-3,5.55) {$a_3$};
\node [scale=1.4] at (1,5.55) {$b_3$};
\node [scale=1.4] at (5,5.55) {$c_3$};
\node [scale=1.4] at (0,-3.5) {\large $G_4$};
\node [scale=1.4] at (12.4,-3.5) {\large $(G_4)_{SR}$};
\draw(x)--(a0)--(a4)--(b4)--(c4)--(c0)--(x);
\draw(b0)--(a0)--(a3)--(b3)--(c3)--(c0)--(b0);
\draw(a0)--(a2)--(b2)--(c2)--(c0);
\draw(a0)--(a1)--(b1)--(c1)--(c0);
\draw(b1)--(b0)--(b2);
\draw(b3)--(b0)--(b4);
\draw(a11)--(c22)--(a33)--(c44)--(a11)--(c33)--(a44)--(c22);
\draw(a44)--(c11)--(a22)--(c33);
\draw(a22)--(c44);
\draw(c11)--(a33);
\draw(b33)--(x1)--(b44);
\draw(b11)--(x1)--(b22);
\end{tikzpicture}
\caption{The graph $G_4\in \mathcal{F}$ and its strong resolving graph.}\label{G_4}
\end{figure}
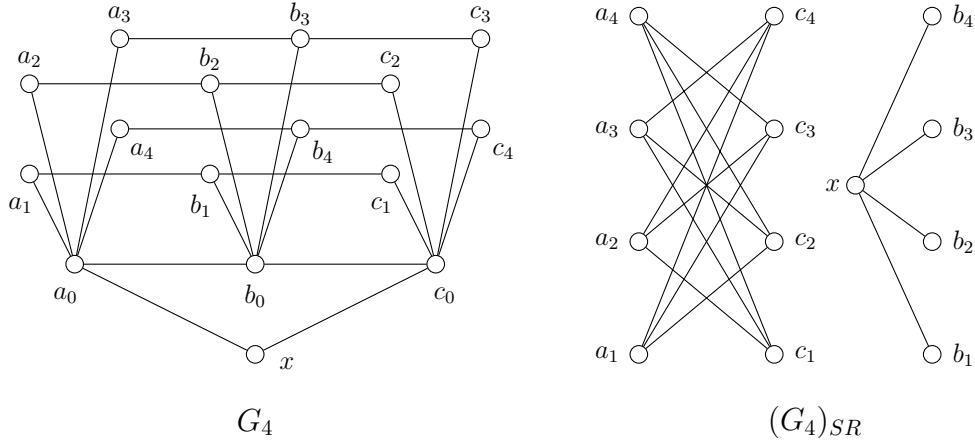
We can easily notice the following.
\begin{itemize}
  \item The vertex $a_i$ is MMD only with the vertices $c_j$ such that $j\ne 0,i$.
  \item Similarly, the vertex $c_i$ is MMD only with the vertices $a_j$ such that $j\ne 0,i$.
  \item The vertex $b_i$ is MMD only with the vertex $x$ and viceversa.
  \item The vertices $a_0,b_0,c_0$ are not MMD with any vertex in $G_r$.
\end{itemize}
As a consequence of the facts above it clearly happens that $(G_r)_{SR}$ contains two connected components. One of them isomorphic to a star graph $S_{1,r}$ with $r$ leaves, and the second one isomorphic to a complete bipartite graph $K_{r,r}$ minus a perfect matching.

Other non realization result for strong resolving graphs comes whether we consider the cycle $C_4$ as a possible strong resolving graph.

%

By Proposition \ref{stars-as-SRG}  we learned that the graph equations  $G_{SR}\cong K_{1,r}$ and  $G_{SR}\cong K_{2,r}$, for $r\ge 2$, have no solution. We propose the following conjecture.

\begin{conjecture}
The graph equation  $G_{SR}\cong K_{r,s}$ has no solution for any $r,s\ge 2$.
\end{conjecture}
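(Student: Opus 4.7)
The plan is a proof by contradiction: assume $G_{SR}\cong K_{r,s}$ with bipartition $U_1\cup U_2$, $r,s\ge 2$. First I would recycle the opening of the proof of Proposition~\ref{stars-as-SRG}: because $\sigma(G)$ induces a clique in $G_{SR}$ and $K_{r,s}$ is triangle-free, each $U_i$ contains at most one simplicial vertex, so at least $|U_i|-1$ non-simplicial ones; together with Lemma~\ref{lem:no_extreme} (a non-simplicial vertex needs two distinct non-neighbors in $G_{SR}$, which must lie in its own part of the bipartition) this already forces $r,s\ge 3$.

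Next I would extract betweenness data from Lemma~\ref{lem:no_extreme}: for each non-simplicial $v\in U_1$ there exist distinct $v',v''\in U_1\setminus\{v\}$ with $d_G(v',v'')=d_G(v',v)+d_G(v,v'')$, and analogously for $U_2$. For $r=3$ the witness is forced by cardinality: if $U_1=\{a_1,a_2,a_3\}$ and (say) $a_2,a_3$ are non-simplicial, then the only possible witness pairs are $\{a_1,a_3\}$ for $a_2$ and $\{a_1,a_2\}$ for $a_3$, which combine to give $d_G(a_2,a_3)=0$, a contradiction. By symmetry this closes the subcases $r=3$ and $s=3$.

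For $r,s\ge 4$, where the witness is no longer forced by cardinality, I would strengthen the argument with an auxiliary observation coming directly from the MMD relation: for every $a_i\in U_1$, every vertex $a_j\ne a_i$ and every $b_k\in U_2$, the vertex $a_i$ cannot lie strictly between $a_j$ and $b_k$ on a shortest $G$-path; otherwise the predecessor of $a_i$ on that path toward $a_j$ would be a neighbor of $a_i$ farther from $b_k$ than $a_i$, contradicting that $a_i$ is maximally distant from $b_k$. This yields the strict triangle inequality
\[
d_G(a_j,b_k)+1\le d_G(a_j,a_i)+d_G(a_i,b_k),
\]
and a symmetric one inside $U_2$. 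Combining these with the betweenness relations and an extremal argument on $\max_{u,v\in U_1}d_G(u,v)$ (pick a pair realising the maximum, apply Lemma~\ref{lem:no_extreme} to a non-simplicial endpoint, and observe that its witness pair must avoid the other endpoint) should propagate an inconsistent system of distance equations.

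The main obstacle will be precisely this last step: simple configurations such as $d_G(a_i,a_j)\in\{1,2\}$ with $\{a_1,a_2,a_3,a_4\}$ spanning a $C_4$ satisfy every individual betweenness equation and triangle inequality in isolation, so the contradiction must genuinely exploit the interaction between the strict triangle inequalities above and the betweenness data chosen \emph{simultaneously} for all non-simplicial vertices of both $U_1$ and $U_2$. A parallel route I would also pursue is to settle $D(G)=2$ separately via Proposition~\ref{proposition diameter two} and Theorem~\ref{GEqualGcSR} (where $G_{SR}\cong K_{r,s}$ with $r,s\ge 2$ would force true twins inside one of the parts, contradicting the twin-free requirement) and only then attack $D(G)\ge 3$ with the machinery above, where the richer shortest-path structure makes the strict triangle inequality bite harder.
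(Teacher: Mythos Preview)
The statement you are attempting to prove is presented in the paper as an open \emph{conjecture}; the paper offers no proof beyond the partial result of Proposition~\ref{stars-as-SRG}, which settles only $r\le 2$ (and by symmetry $s\le 2$). So there is no paper proof to compare against, and your proposal should be read as an attack on an open problem rather than a reconstruction.

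Your argument genuinely extends the paper's partial result: the case $\min\{r,s\}=3$ is new and correct. Once you observe that the two non-simplicial vertices of a three-element part $U_1=\{a_1,a_2,a_3\}$ have \emph{forced} witness pairs in the sense of (the proof of) Lemma~\ref{lem:no_extreme}, the two betweenness equations $d_G(a_1,a_3)=d_G(a_1,a_2)+d_G(a_2,a_3)$ and $d_G(a_1,a_2)=d_G(a_1,a_3)+d_G(a_3,a_2)$ immediately sum to $d_G(a_2,a_3)=0$. This is a clean piece of progress and could stand on its own as a strengthening of Proposition~\ref{stars-as-SRG}.

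However, the proposal is not a proof of the full conjecture, and you say so yourself. For $r,s\ge 4$ the witness pair coming from Lemma~\ref{lem:no_extreme} is no longer forced by cardinality, and your ``extremal argument on $\max_{u,v\in U_1}d_G(u,v)$'' is only a hope, not an argument: you never specify which system of equations is to be derived or why it is inconsistent. The strict triangle inequality you record, $d_G(a_j,b_k)<d_G(a_j,a_i)+d_G(a_i,b_k)$, is correct and useful, but you have not shown how to combine it with the (unforced) betweenness data to reach a contradiction; your own $C_4$ example illustrates exactly why the pieces do not assemble automatically. The ``parallel route'' for $D(G)=2$ is likewise only a sketch: Theorem~\ref{GEqualGcSR} applies only when $G$ is true-twin-free, and you have not explained how to dispose of the case where $G$ has true twins (which, in diameter two, land in \emph{opposite} parts of the bipartition of $G_{SR}$, not the same part as you suggest). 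In short: you have pushed the boundary from $\min\{r,s\}\le 2$ to $\min\{r,s\}\le 3$, but the conjecture remains open in your write-up for $r,s\ge 4$.
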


Our next result concerns the equation $G_{SR}\cong P_n$, with $n\ne 3$. To this end, we consider the family $\mathcal{F}_P$ of graphs $G_P^n$ with $n\ge 5$ given as follows.
\begin{itemize}
  \item We begin with a path on $n-1$ vertices $v_1v_2\ldots v_{n-1}$.
  \item If $n$ is even, then
  \begin{itemize}
    \item add $\frac{n-2}{2}$ vertices $a_1,a_2,\ldots, a_{(n-2)/2}$ and $\frac{n-2}{2}$ vertices $b_1,b_2,\ldots, b_{(n-2)/2}$,
    \item add the edges $a_iv_{2i-1}$, $a_iv_{2i+1}$ with $i\in\{1,\ldots,(n-2)/2\}$, the edges $b_iv_{2i}$, $b_iv_{2i+2}$ with $i\in\{1,\ldots,(n-4)/2\}$ and the edges $b_{(n-2)/2}v_{n-2}$, $b_{(n-2)/2}v_{n-1}$.
  \end{itemize}
  \item If $n$ is odd, then
  \begin{itemize}
    \item add $\frac{n-1}{2}$ vertices $a_1,a_2,\ldots, a_{(n-1)/2}$ and $\frac{n-3}{2}$ vertices $b_1,b_2,\ldots, b_{(n-3)/2}$,
    \item add the edges $a_iv_{2i-1}$, $a_iv_{2i+1}$ with $i\in\{1,\ldots,(n-3)/2\}$, the edges $a_{(n-1)/2}v_{n-2}$, $a_{(n-1)/2}v_{n-1}$ and the edges $b_iv_{2i}$, $b_iv_{2i+2}$ with $i\in\{1,\ldots,(n-3)/2\}$.
  \end{itemize}
\end{itemize}
Examples of the graphs of family $\mathcal{F}_P$ are given in Figure \ref{G_P-Ex}.

\begin{figure}[ht]
\centering
\begin{tikzpicture}[scale=.6, transform shape]
\node [draw, shape=circle] (v1) at  (0,4) {};
\node [draw, shape=circle] (v2) at  (2,4) {};
\node [draw, shape=circle] (v3) at  (4,4) {};
\node [draw, shape=circle] (v4) at  (6,4) {};
\node [draw, shape=circle] (v5) at  (8,4) {};
\node [draw, shape=circle] (v6) at  (10,4) {};
\node [draw, shape=circle] (v7) at  (12,4) {};
\node [draw, shape=circle] (v8) at  (14,4) {};

\node [draw, shape=circle] (a1) at  (2,6) {};
\node [draw, shape=circle] (a2) at  (6,6) {};
\node [draw, shape=circle] (a3) at  (10,6) {};
\node [draw, shape=circle] (a4) at  (14,6) {};

\node [draw, shape=circle] (b1) at  (4,2) {};
\node [draw, shape=circle] (b2) at  (8,2) {};
\node [draw, shape=circle] (b3) at  (12,2) {};

\node [draw, shape=circle] (v11) at  (0,0.1) {};
\node [draw, shape=circle] (v21) at  (2,0.1) {};
\node [draw, shape=circle] (v31) at  (4,0.1) {};
\node [draw, shape=circle] (v41) at  (6,0.1) {};
\node [draw, shape=circle] (v51) at  (8,0.1) {};
\node [draw, shape=circle] (v61) at  (10,0.1) {};
\node [draw, shape=circle] (v71) at  (12,0.1) {};
\node [draw, shape=circle] (v81) at  (14,0.1) {};
\node [draw, shape=circle] (v91) at  (16,0.1) {};

\node [scale=1.4] at (2,6.5) {$a_1$};
\node [scale=1.4] at (6,6.5) {$a_2$};
\node [scale=1.4] at (10,6.5) {$a_3$};
\node [scale=1.4] at (14,6.5) {$a_4$};
\node [scale=1.4] at (4,1.4) {$b_1$};
\node [scale=1.4] at (8,1.4) {$b_2$};
\node [scale=1.4] at (12,1.4) {$b_3$};

\node [scale=1.4] at (2,4.5) {$v_2$};
\node [scale=1.4] at (6,4.5) {$v_4$};
\node [scale=1.4] at (10,4.5) {$v_6$};
\node [scale=1.4] at (14.6,4) {$v_8$};
\node [scale=1.4] at (-0.6,4) {$v_1$};
\node [scale=1.4] at (4,3.5) {$v_3$};
\node [scale=1.4] at (8,3.5) {$v_5$};
\node [scale=1.4] at (12,3.5) {$v_7$};

\node [scale=1.4] at (2,-0.5) {$b_1$};
\node [scale=1.4] at (6,-0.5) {$b_2$};
\node [scale=1.4] at (10,-0.5) {$b_3$};
\node [scale=1.4] at (0,-0.5) {$a_1$};
\node [scale=1.4] at (4,-0.5) {$a_2$};
\node [scale=1.4] at (8,-0.5) {$a_3$};
\node [scale=1.4] at (12,-0.5) {$a_4$};
\node [scale=1.4] at (14,-0.5) {$v_1$};
\node [scale=1.4] at (16,-0.5) {$v_8$};

\node [scale=1.4] at (-2.5,4) {\large $G_P^{9}$};
\node [scale=1.4] at (-2.5,0) {\large $(G_P^{9})_{SR}$};

\draw(v1)--(a1)--(v3)--(a2)--(v5)--(a3)--(v7)--(a4)--(v8)--(v7)--(v6)--(v5)--(v4)--(v3)--(v2)--(v1);
\draw(v2)--(b1)--(v4)--(b2)--(v6)--(b3)--(v8);

\draw(v91)--(v81)--(v71)--(v61)--(v51)--(v41)--(v31)--(v21)--(v11);
\end{tikzpicture}

\begin{tikzpicture}[scale=.5, transform shape]
\node [draw, shape=circle] (v1) at  (0,4) {};
\node [draw, shape=circle] (v2) at  (2,4) {};
\node [draw, shape=circle] (v3) at  (4,4) {};
\node [draw, shape=circle] (v4) at  (6,4) {};
\node [draw, shape=circle] (v5) at  (8,4) {};
\node [draw, shape=circle] (v6) at  (10,4) {};
\node [draw, shape=circle] (v7) at  (12,4) {};
\node [draw, shape=circle] (v8) at  (14,4) {};
\node [draw, shape=circle] (v9) at  (16,4) {};

\node [draw, shape=circle] (a1) at  (2,6) {};
\node [draw, shape=circle] (a2) at  (6,6) {};
\node [draw, shape=circle] (a3) at  (10,6) {};
\node [draw, shape=circle] (a4) at  (14,6) {};

\node [draw, shape=circle] (b1) at  (4,2) {};
\node [draw, shape=circle] (b2) at  (8,2) {};
\node [draw, shape=circle] (b3) at  (12,2) {};
\node [draw, shape=circle] (b4) at  (16,2) {};

\node [draw, shape=circle] (v11) at  (0,0.1) {};
\node [draw, shape=circle] (v21) at  (2,0.1) {};
\node [draw, shape=circle] (v31) at  (4,0.1) {};
\node [draw, shape=circle] (v41) at  (6,0.1) {};
\node [draw, shape=circle] (v51) at  (8,0.1) {};
\node [draw, shape=circle] (v61) at  (10,0.1) {};
\node [draw, shape=circle] (v71) at  (12,0.1) {};
\node [draw, shape=circle] (v81) at  (14,0.1) {};
\node [draw, shape=circle] (v91) at  (16,0.1) {};
\node [draw, shape=circle] (v101) at  (18,0.1) {};

\node [scale=1.4] at (2,6.5) {$a_1$};
\node [scale=1.4] at (6,6.5) {$a_2$};
\node [scale=1.4] at (10,6.5) {$a_3$};
\node [scale=1.4] at (14,6.5) {$a_4$};
\node [scale=1.4] at (4,1.4) {$b_1$};
\node [scale=1.4] at (8,1.4) {$b_2$};
\node [scale=1.4] at (12,1.4) {$b_3$};
\node [scale=1.4] at (16,1.4) {$b_4$};

\node [scale=1.4] at (2,4.5) {$v_2$};
\node [scale=1.4] at (6,4.5) {$v_4$};
\node [scale=1.4] at (10,4.5) {$v_6$};
\node [scale=1.4] at (14,4.5) {$v_8$};
\node [scale=1.4] at (-0.6,4) {$v_1$};
\node [scale=1.4] at (4,3.5) {$v_3$};
\node [scale=1.4] at (8,3.5) {$v_5$};
\node [scale=1.4] at (12,3.5) {$v_7$};
\node [scale=1.4] at (16.6,4) {$v_9$};

\node [scale=1.4] at (2,-0.5) {$b_1$};
\node [scale=1.4] at (6,-0.5) {$b_2$};
\node [scale=1.4] at (10,-0.5) {$b_3$};
\node [scale=1.4] at (14,-0.5) {$b_4$};
\node [scale=1.4] at (0,-0.5) {$a_1$};
\node [scale=1.4] at (4,-0.5) {$a_2$};
\node [scale=1.4] at (8,-0.5) {$a_3$};
\node [scale=1.4] at (12,-0.5) {$a_4$};
\node [scale=1.4] at (16,-0.5) {$v_1$};
\node [scale=1.4] at (18,-0.5) {$v_9$};

\node [scale=1.4] at (-2.5,4) {\large $G_P^{10}$};
\node [scale=1.4] at (-2.5,0) {\large $(G_P^{10})_{SR}$};

\draw(v1)--(a1)--(v3)--(a2)--(v5)--(a3)--(v7)--(a4)--(v9)--(v8)--(v7)--(v6)--(v5)--(v4)--(v3)--(v2)--(v1);
\draw(v2)--(b1)--(v4)--(b2)--(v6)--(b3)--(v8)--(b4)--(v9);

\draw(v101)--(v91)--(v81)--(v71)--(v61)--(v51)--(v41)--(v31)--(v21)--(v11);
\end{tikzpicture}
\caption{The graphs $G_P^{9}$ and $G_P^{10}$ in $\mathcal{F}_P$ and their strong resolving graphs.}\label{G_P-Ex}
\end{figure}
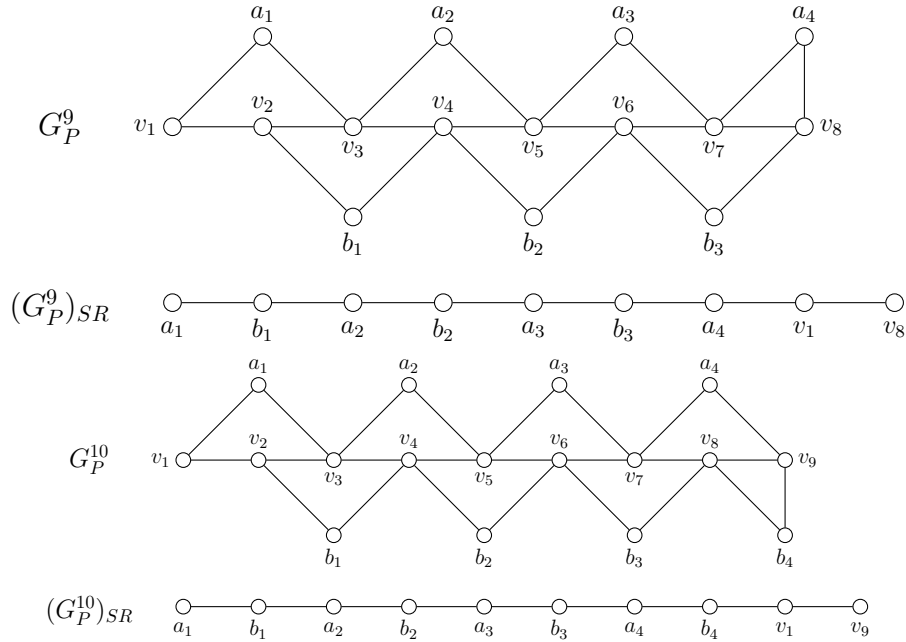

\begin{proposition}
For any integer $n\ge 2$ and $n\ne 3$, there exists a graph $G$ such that $G_{SR}\cong P_n$.
\end{proposition}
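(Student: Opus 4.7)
The plan is to treat three ranges of $n$ separately, exhibiting an explicit realizing graph in each case. For $n = 2$, any path $P_m$ with $m \geq 2$ works, since $(P_m)_{SR} \cong K_{l(P_m)} \cong K_2 \cong P_2$ by Observation~\ref{observation1}(a).

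For $n = 4$, I would take the fan $G = P_4 \vee K_1$ obtained by joining a single vertex $x$ to a path $v_1 v_2 v_3 v_4$. Since $G$ has diameter two and is true twin-free, Proposition~\ref{proposition diameter two} gives $G_{SR} \cong G^*_-$: the universal vertex $x$ is isolated in $G^*$ and is deleted, and the remaining edges of $G^*$ are the pairs of path vertices at distance two in $G$, namely $\{v_1, v_3\}$, $\{v_1, v_4\}$, and $\{v_2, v_4\}$, which form the path $v_3 - v_1 - v_4 - v_2 \cong P_4$.

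For $n \geq 5$, take $G = G_P^n \in \mathcal{F}_P$ as defined immediately above the proposition. The verification of $(G_P^n)_{SR} \cong P_n$ proceeds in two steps. First, I would determine the boundary $\partial(G_P^n)$, arguing that every interior path vertex $v_k$ (with $2 \leq k \leq n - 2$) fails to be maximally distant from any vertex because one of its attachment neighbors ($a_i$ or $b_j$) always lies strictly farther from every candidate $u$ than $v_k$ itself, while each attachment $a_i, b_j$ and each endpoint $v_1, v_{n-1}$ belongs to the boundary. A direct count gives $|\partial(G_P^n)| = n$ for both parities of $n$. Second, I would identify the MMD pairs on the boundary: matching the figures for $G_P^9$ and $G_P^{10}$, the MMD relation yields the path $a_1 - b_1 - a_2 - b_2 - \cdots - v_1 - v_{n-1}$, with the exact sequence near the right end adjusted according to the parity of $n$.

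The main obstacle is the second step above: a careful distance computation verifying that precisely the listed $n - 1$ consecutive pairs are mutually maximally distant, and no other pair is. The argument relies on the fact that each attachment has degree two on the main path and that the attachments do not create shortcuts between path vertices, so each $a_i$ or $b_j$ is mutually maximally distant only from its immediate predecessor and successor in the zig-zag sequence; every non-consecutive pair of attachments admits a neighbor that violates the mutual condition. Once this case analysis is completed, realizing $P_n$ for every admissible $n$ is established.
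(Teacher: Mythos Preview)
Your proposal is correct and follows essentially the same approach as the paper: the same three constructions ($P_m$ for $n=2$, the fan $K_1+P_4$ for $n=4$, and the family $G_P^n$ for $n\ge 5$) and the same verification strategy of identifying the boundary and listing the MMD pairs to obtain the zig-zag path $a_1 b_1 a_2 b_2 \cdots v_1 v_{n-1}$. Your treatment of the $n=4$ case via Proposition~\ref{proposition diameter two} is slightly more explicit than the paper's, but otherwise the arguments coincide.
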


\begin{proof}
  If $n=2$, then any path $P_t$ satisfies that $(P_t)_{SR}\cong P_2$. If $n=3$, then by Proposition \ref{stars-as-SRG} we know there is no graph $G$ such $G_{SR}\cong G$. If $n=4$, then consider the join graph $K_1+P_4$, for which it is not difficult to see that $(K_1+P_4)_{SR}\cong P_4$. If $n\ge 5$, then we consider a graph $G_P^n\in \mathcal{F}_P$, where the following facts are observed. Assume $n$ is even.
  \begin{itemize}
    \item Every vertex $a_i$, with $i\in \{2,\dots,(n-2)/2\}$, is only MMD with the vertices $b_i$ and $b_{i-1}$.
    \item The vertex $a_1$ is only MMD with the vertex $b_1$.
    \item Every vertex $b_i$, with $i\in \{1,\dots,(n-4)/2\}$, is only MMD with the vertices $a_i$ and $a_{i+1}$.
    \item The vertex $b_{(n-2)/2}$ is only MMD with the vertices $a_{(n-2)/2}$ and $v_1$.
    \item The vertices $v_1$ and $v_{n-1}$ are MMD between them.
    \item No vertex $v_i$,  with $i\in \{2,\dots,n-2\}$, belongs to the boundary of $G_P^n$.
  \end{itemize}
  According to the items above it clearly follows that $(G_P^n)_{SR}$ is isomorphic to the path $P_n=a_1b_1a_2b_2\cdots a_{(n-2)/2}b_{(n-2)/2}v_1v_{n-1}$. A similar procedure can be used for the case $n$ odd, which completes the proof.
\end{proof}

Our next   result concerns the realization  of cycles $C_n$ as strong resolving graphs. From Observation \ref{observation1} (iii) we know that for any odd cycle $C_{2k+1}$, it follows $(C_{2k+1})_{SR}\cong C_{2k+1}$. Also, from Proposition \ref{stars-as-SRG}, the cycle $C_4\cong K_{2,2}$ is not realizable as the strong resolving graph of any graph. In general, the following can be stated.

\begin{proposition}
For any integer $n\ge 3$ and $n\ne 4$, there exists a graph $G$ such that $G_{SR}\cong C_n$.
\end{proposition}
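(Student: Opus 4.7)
The argument splits by the parity of $n$. If $n\ge 3$ is odd, Observation~\ref{observation1}(c) directly yields $(C_n)_{SR}\cong C_n$, so taking $G=C_n$ realizes the cycle and nothing more is required.

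The substantial case is $n=2k$ even with $k\ge 3$. When $k$ is odd, I would use the prism graph $G=C_k\,\square\,K_2$ (the Cartesian product of an odd cycle with $K_2$). Using the product-distance formula $d((v,a),(u,b))=d_{C_k}(v,u)+d_{K_2}(a,b)$ together with the fact that each vertex of an odd cycle $C_k$ has exactly two vertices at maximum distance $(k-1)/2$, a direct verification shows that each vertex $(v,a)$ of the prism has exactly two MMD partners, both in the opposite layer (namely $(u,1-a)$ for the two $u$'s at cycle distance $(k-1)/2$ from $v$). Chasing these MMD edges alternately between the two layers, the pattern $j\mapsto j+(k-1)/2\pmod k$ and $a\mapsto 1-a$ has order $2k$, so the MMD edges form a single $2k$-cycle visiting every vertex, giving $G_{SR}\cong C_{2k}$. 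This already handles every $n\equiv 2\pmod 4$ with $n\ge 6$.

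For the remaining case $n\equiv 0\pmod 4$, the prism fails, since for even $k$ it is two-antipodal and its strong resolving graph degenerates to a perfect matching. Here I would mimic the design of the family $\mathcal{F}_P$ used above for paths, but replace the path backbone by a cyclic one: start with a sufficiently long cycle $v_1v_2\cdots v_Nv_1$ and attach $n$ degree-two pendant vertices $w_1,\ldots,w_n$ with $w_i$ adjacent to $v_{p_i}$ and $v_{p_i+2}$, for a cyclically symmetric choice of positions $p_1<p_2<\cdots<p_n$. The goal is to verify three properties of the resulting graph $G$: (i) every $w_i$ is MMD with $w_{i-1}$ and $w_{i+1}$ (indices mod $n$); (ii) no $w_i$ is MMD with any nonconsecutive $w_j$; and (iii) every backbone vertex $v_j$ lies on a shortest path between two pendants, so that by Lemma~\ref{lem:extension} no $v_j$ belongs to $\partial(G)$. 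If these hold, then $G_{SR}$ has vertex set $\{w_1,\ldots,w_n\}$ with edges exactly the consecutive pairs $w_iw_{i+1}$, hence $G_{SR}\cong C_n$.

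The main obstacle is calibrating $N$ and $p_1,\ldots,p_n$ so that (i)--(iii) hold simultaneously: the positions must be spread widely enough to block long-range MMD pairs in (ii), while still short-range enough to produce the desired MMD pairs in (i), and the backbone must be long enough that no $v_j$ becomes maximally distant from anything. The distance analysis splits into three subcases according to whether two pendants share a common backbone neighbor, are separated by a short arc of the backbone, or are separated by an arc close to half the backbone; in each subcase exactly one of (i)--(iii) is at stake, and the parameters can be chosen to make the correct alternative occur. Once this calibration is carried out, the three checks become routine distance computations analogous to those used to analyze $\mathcal{F}_P$, completing the proof.
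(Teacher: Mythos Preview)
Your treatment of odd $n$ is fine, and the prism argument for $n\equiv 2\pmod 4$ is correct and pleasant (indeed Theorem~\ref{cartesian_directed} gives $(C_k\Box K_2)_{SR}\cong (C_k)_{SR}\times (K_2)_{SR}\cong C_k\times K_2\cong C_{2k}$ for odd $k$, confirming your direct check). However, the case $n\equiv 0\pmod 4$ is not proved: you only describe a family of candidate graphs and a list of properties (i)--(iii) that you would like them to satisfy, explicitly flagging the calibration of $N$ and the positions $p_i$ as ``the main obstacle'' without resolving it. Nothing guarantees that such parameters exist, and in fact cyclic analogues of $\mathcal{F}_P$ are delicate because the backbone vertices of a cycle have no natural ``ends'' to hide behind, so forcing all $v_j$ out of $\partial(G)$ while keeping exactly the consecutive $w_i$'s MMD is not automatic. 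As it stands, the cases $n=8,12,16,\ldots$ remain open in your argument.

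The paper sidesteps all of this with a one-line uniform construction for every $n\ge 5$: take $G=C_n^c$. For $n\ge 5$ the complement of a cycle has diameter two and is true twin-free, so Theorem~\ref{GEqualGcSR} gives $(C_n^c)_{SR}\cong (C_n^c)^c\cong C_n$ directly. This handles all parities at once and requires no ad hoc constructions or distance bookkeeping; your prism works nicely where it applies, but the complement trick is both simpler and complete.
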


\begin{proof}
  If $n=3$, then clearly $(C_3)_{SR}\cong C_3$. Consider a cycle graph of order $n\ge 5$. Since $C_n^c$ is a twin-free graph and has diameter two, by Theorem \ref{GEqualGcSR}, $(C_n^c)_{SR}\cong (C_n^c)^c\cong C_n$. That is, the complement of a cycle of order $n$ gives a strong resolving graph isomorphic to the cycle $C_n$, which completes the realization.
\end{proof}

More in general, since $D(G)\ge 4$ leads to $D(G^c)=2$, the following result is a direct consequence of Theorem \ref{GEqualGcSR}.

\begin{corollary}
Any false twin-free graph of diameter greater than or equal to four is the strong resolving graph of a true twin-free graph of diameter two.
\end{corollary}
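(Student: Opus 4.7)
The plan is to apply Theorem~\ref{GEqualGcSR} in reverse: given a false twin-free graph $H$ with $D(H)\ge 4$, I will take $G:=H^c$ and show that $G$ satisfies the two hypotheses of Theorem~\ref{GEqualGcSR}, namely $D(G)=2$ and $G$ is true twin-free. Once this is done, the theorem gives $G_{SR}\cong G^c=(H^c)^c=H$, which is exactly what the corollary claims.

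First I would verify that $D(H^c)=2$. This is the fact already invoked just before the statement: if $D(H)\ge 4$ then $D(H^c)=2$. For completeness, fix two non-adjacent vertices $u,v$ in $H^c$ (so $u\sim v$ in $H$) and pick $x,y\in V(H)$ with $d_H(x,y)\ge 4$. If both $x$ and $y$ were adjacent in $H$ to some vertex of $\{u,v\}$, then via the edge $uv$ we would obtain $d_H(x,y)\le 3$, a contradiction; hence at least one of $x,y$ is a common non-neighbor of $u$ and $v$ in $H$, i.e.\ a common neighbor in $H^c$, giving $d_{H^c}(u,v)\le 2$. Since $D(H)\ge 4>0$, $H^c$ is not complete, so $D(H^c)=2$.

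Next I would translate twin conditions between $H$ and $H^c$. The key observation is that for any vertex $w$ one has $N_{H^c}[w]=V(H)\setminus N_H(w)$, so
\[
N_{H^c}[u]=N_{H^c}[v]\iff N_H(u)=N_H(v).
\]
Thus $u,v$ are true twins in $H^c$ precisely when they are false twins in $H$. Since $H$ is assumed to be false twin-free, $G=H^c$ is true twin-free.

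With $D(G)=2$ and $G$ true twin-free established, Theorem~\ref{GEqualGcSR} yields $G_{SR}\cong G^c=H$, completing the proof. There is no real obstacle here beyond the twin-translation bookkeeping, which is why the statement is phrased as a corollary; the only subtlety worth double-checking in writing up is that $H^c$ is connected (guaranteed by $D(H^c)=2$), so that the hypotheses of Theorem~\ref{GEqualGcSR} are fully met.
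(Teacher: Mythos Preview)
Your proposal is correct and follows exactly the approach the paper intends: the paper simply remarks that $D(G)\ge 4$ forces $D(G^c)=2$ and declares the corollary a direct consequence of Theorem~\ref{GEqualGcSR}, whereas you spell out both that implication and the false-twin/true-twin correspondence $N_{H^c}[u]=N_{H^c}[v]\iff N_H(u)=N_H(v)$ in detail. The only cosmetic point is that your justification ``$D(H)\ge 4>0$, $H^c$ is not complete'' tacitly uses that $H$ has at least one edge; this is automatic when $D(H)$ is finite (so $H$ is connected on $\ge 5$ vertices), and in any case an edgeless $H$ on $\ge 2$ vertices would have false twins, so the hypothesis covers it.
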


A summary of the results we obtained related to the Realization Theorem with complete and complete bipartite graphs, paths and cycles can be found in Table~\ref{table:realization}.

\def\arraystretch{1.8}
\begin{table}[h]\label{table:realization}
\centering
\caption{Notable graphs families and Realization Problem}
\label{my-label}
\begin{tabular}{l|l|l|l|l|l|l}
 & $K_n\ (n\geq 2)$ &  $K_{1,r}\ (r\geq 1)$ &  $K_{2,r}\ (r\geq 1)$ & $K_{s,r}\ (r\geq 3)$ & $P_n\ (n\geq 2)$ & $C_n\ (n\geq 3)$ \\
 \hline
$=G_{SR}$  & $n\geq 2$ & $r=1$ & none & unknown & $n\neq 3$  & $n\neq 4$\\
 \hline
$\neq G_{SR}$ & none & $r\geq 2$ & $r\geq 1$ & unknown & $n=3$& $n=4$
\end{tabular}
\end{table}

\section{Strong Resolving Graph of Product Graphs}\label{SectionCharacterization problem for product graphs}

We begin this section with a brief overview on products of graphs, of those ones which will be further considered. According to the two books \cite{Hammack2011,Imrich2000}, a graph product of the graphs $G$ and $H$ means a graph whose vertex set is defined on the cartesian product $V(G)\times V(H)$ of the vertex sets of $G$ and $H$, and edges are determined by a function on the edges of $G$ and $H$. The graphs $G$ and $H$ are called the {\em factor graphs}. Considering such mentioned rules, there are exactly 256 possible products. However, according to several their properties such as associativity, commutativity, complementarity, etc., the most common and well investigated are the Cartesian product, the direct product, the strong product, and the lexicographic product, which are also known as the \emph{standard products} \cite{Hammack2011,Imrich2000}. Nonetheless, there exist other less known operations with graphs which are interesting for some investigations, for instance we could mention the Cartesian sum graph and the corona product graphs, among other ones.

Studies on finding relationships between properties of product graphs and properties of the factors have attracted several researchers in the last recent year. The case of strong metric generators has not escaped to this and several investigations have been published concerning this. In such researches a powerful tool has been deducing the structure of the strong resolving graph of a product from that of its factors. In this section we survey precisely some results concerning the strong resolving graphs of product graphs, but we previously gives some background on their definitions and basic properties.

The {\em direct product} of two graphs $G$ and $H$ is the graph $G\times H$, such that $V(G\times H)=V(G)\times V(H)$ and two vertices $(a,b),(c,d)$ are adjacent in $G\times H$ if and only if
\begin{itemize}
\item $ac\in E(G)$ and
\item $bd\in E(H)$.
\end{itemize}

The direct product is also known as the \emph{Kronecker product}, the \emph{tensor product}, the \emph{categorical product}, the \emph{cardinal product}, the \emph{cross product}, the \emph{conjunction}, the \emph{relational product} or the \emph{weak direct product}. This product is commutative and associative in a natural way \cite{Hammack2011,Imrich2000}. The distance and connectedness in the direct product are more subtle than for other products. The formula on the vertex distances in the direct product is the following.
\begin{remark}{\em \cite{Kim1991}}\label{dir-distance}
For any graphs $G$ and $H$ and any two vertices $(a,b)$, $(c,d)$ of $G\times H$,
$$d_{G\times H}((a,b),(c,d))=\min \{\max \{d_{G}^{e}(a,c),d_{H}^{e}(b,d)\}, \max \{d_{G}^{o}(a,c),d_{H}^{o}(b,d)\}\},$$
where $d_{G}^{e}(a,c)$ means the length of a shortest walk of even
length between $a$ and $c$ in $G$, and $d_{G}^{o}(a,c)$ the length of a shortest odd walk between $a$ and $c$ in $G$. If such a walk does not exist, we set $d_{G}^{e}(a,c)$ or $d_{G}^{o}(a,c)$ to be infinite.
\end{remark}

On the other hand, the connectedness in the direct product of two graphs relies on the bipartite properties of the factor graphs, namely, the result presented at next.

\begin{theorem}{\em \cite{Weichsel1962}}\label{dir-connected}
A direct product of nontrivial graphs is connected if and only if both factors are connected and at least one factor is nonbipartite.
\end{theorem}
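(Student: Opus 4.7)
The proof splits naturally into the two implications. For the \emph{necessity} direction, I will argue the contrapositive: if either factor is disconnected, or both are bipartite, then $G\times H$ is disconnected. The first sub-case is the easier of the two: since any edge $(a,b)(c,d)$ of $G\times H$ forces $ac\in E(G)$, the projection onto the first coordinate maps any walk in $G\times H$ to a walk in $G$, so vertices whose first coordinates lie in different components of $G$ cannot be joined in $G\times H$; symmetrically for $H$. The more delicate sub-case assumes both factors are bipartite with bipartitions $V(G)=X_1\cup X_2$ and $V(H)=Y_1\cup Y_2$. Here the plan is to define
\[
A=(X_1\times Y_1)\cup (X_2\times Y_2),\qquad B=(X_1\times Y_2)\cup(X_2\times Y_1),
\]
and check that every edge of $G\times H$ has both endpoints in $A$ or both in $B$: an edge requires its $G$-projection to cross the bipartition of $G$ and its $H$-projection to cross the bipartition of $H$, so membership in $A$ versus $B$ is preserved. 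Since both factors are nontrivial and connected (else we are already done by the first sub-case), both $A$ and $B$ are nonempty, giving disconnectedness.

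For the \emph{sufficiency} direction, the cleanest route is to invoke the distance formula recorded in Remark~\ref{dir-distance}. Given connected $G$ and $H$ with, say, $G$ nonbipartite, I need to show that the right-hand side of that formula is finite for every pair $(a,b),(c,d)$. Because $G$ is connected there is a walk from $a$ to $c$; because $G$ is nonbipartite it contains an odd closed walk based at any vertex, and concatenating such a closed walk with an existing $a$--$c$ walk flips its parity. Hence both $d_G^e(a,c)$ and $d_G^o(a,c)$ are finite. Since $H$ is connected, at least one of $d_H^e(b,d)$, $d_H^o(b,d)$ is finite; whichever parity is available in $H$ can be paired with the matching parity in $G$, making the corresponding $\max$ finite, and therefore the $\min$ finite as well.

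The main obstacle, if any, is getting the bipartite obstruction in the necessity direction written cleanly: one must verify that the partition $(A,B)$ really has no crossing edges and that both blocks are nonempty, which uses both connectedness of the factors (so each $X_i,Y_j$ is nonempty) and nontriviality (so the products are actually split into two nonempty pieces). Apart from that, the argument is entirely bookkeeping once Remark~\ref{dir-distance} is in hand; the parity-switching trick via an odd cycle in the nonbipartite factor is the only nontrivial ingredient, and it is the same trick that underlies the forward direction's bipartition obstruction.
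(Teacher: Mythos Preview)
The paper does not supply a proof of this theorem; it is quoted as a classical result from Weichsel~\cite{Weichsel1962} and used only as background for the discussion of direct products. There is therefore no ``paper's own proof'' to compare against.

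That said, your argument is correct and is essentially the standard one. The necessity direction is handled cleanly: the projection argument disposes of disconnected factors, and your partition $A=(X_1\times Y_1)\cup(X_2\times Y_2)$, $B=(X_1\times Y_2)\cup(X_2\times Y_1)$ is exactly the right invariant when both factors are bipartite (it is worth remarking explicitly that a connected nontrivial bipartite graph has both colour classes nonempty, which you do note). For sufficiency, invoking Remark~\ref{dir-distance} is efficient and legitimate in this paper's context; the parity-switching step---routing through an odd closed walk in the nonbipartite factor to realise both $d_G^e(a,c)$ and $d_G^o(a,c)$---is the standard trick, and pairing whichever parity is available in $H$ with the matching parity in $G$ then yields a finite value in the distance formula. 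No gaps.
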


In contrast to distances, the direct product is the most natural product for open neighborhoods:
\begin{equation}\label{neigh}
N_{G\times H}(a,b)=N_G(a)\times N_H(b).
\end{equation}


The \textit{Cartesian product} of two graphs $G$ and $H$ is the graph $G\Box H$, such that $V(G\Box H)=V(G)\times V(H)$ and two vertices $(a,b),(c,d)\in V(G\Box H)$ are adjacent in $G\Box H$ if and only if either
\begin{itemize}
\item $a=c$ and $bd\in E(H)$, or
\item $ac\in E(G)$  and $b=d$.
\end{itemize}

The Cartesian product is a straightforward and natural construction, and is in many respects the simplest graph product \cite{Hammack2011,Imrich2000}. Hypercubes, Hamming graphs and grid graphs are some particular cases of this product. The \emph{Hamming graph} $H_{k,n}$ is the Cartesian product of $k$ copies of the complete graph $K_n$, \emph{i.e.},
\[\begin{array}{c}
            H_{k,n}=\underbrace{K_n\;\Box\; K_n\;\Box\; ...\; \Box\; K_n} \\
            \;\;\;\;\;\;\;\;\;\;k \mbox{ times}
          \end{array}
\]

The \emph{hypercube} $Q_n$ is defined as $H_{n,2}$. Moreover, the \emph{grid graph} $P_k\Box P_n$ is the Cartesian product of the paths $P_k$ and $P_n$, the \emph{cylinder graph} $C_k\Box P_n$ is the Cartesian product of the cycle $C_k$ and the path $P_n$, and the \emph{torus graph} $C_k\Box C_n$ is the Cartesian product of the cycles $C_k$ and $C_n$.

The Cartesian product is a commutative and associative operation. Moreover, it is connected whenever the factors are both connected. The distance between any two of its vertices is given by
$$d_{G\Box H}((a,b),(c,d))=d_G(a,c)+d_H(b,d)$$
while the neighborhood of a vertex $(a,b)\in V(G\Box H)$ is
\begin{equation*}
N_{G\Box H}(a,b)=(N_G(a)\times \{b\})\cup (\{a\}\times N_H(b)).
\end{equation*}

The \textit{strong product} of two graphs $G$ and $H$ is the graph $G\boxtimes H$ such that $V(G\boxtimes H)=V(G)\times V(H)$, and two vertices $(a,b),(c,d)\in V(G\boxtimes H)$ are adjacent in $G\boxtimes H$ if and only if either
\begin{itemize}
\item $a=c$ and $bd\in E_2$, or
\item $ac\in E_1$ and $b=d$, or
\item $ac\in E_1$ and $bd\in E_2$.
\end{itemize}

Similarly to the Cartesian product, the strong product is a commutative and associative operation and, it is connected whenever the factors are both connected. The distance between any two of its vertices is computed by using the following formula
$$d_{G\boxtimes H}((a,b),(c,d))=\max\{d_G(a,c),d_H(b,d)\}.$$
On the other hand, the neighborhood of a vertex $(a,b)\in V(G\boxtimes H)$ is given by
\begin{equation*}
N_{G\boxtimes H}(a,b)=N_G[a]\times N_H[b].
\end{equation*}


The \textit{lexicographic product} of two graphs $G$ and $H$ is the graph $G\circ H$ with vertex set $V(G\circ H)=V(G)\times V(H)$ and two vertices $(a,b)\in V(G\circ H)$ and $(c,d)\in V(G\circ H)$ are adjacent in $G\circ H$ if and only if either
\begin{itemize}
\item $ac\in E_1$, or
\item $a=c$ and $bd\in E_2$.
\end{itemize}

Note that the lexicographic product of two graphs is the only not commutative operation among the four standard products. Moreover, $G\circ H$ is a connected graph if and only if $G$ is connected. The distances and neighborhoods in the lexicographic product are obtained as the following known results show.

\begin{theorem}{\rm \cite{Hammack2011}}\label{basictoolLexicographic} Let $G$ and $H$ be two nontrivial graphs such that $G$ is connected. Then the following assertions hold for any  $a,c\in V(G)$ and $b,d\in V(H)$ such  that $a\ne c$.
\begin{enumerate}[{\rm (i)}]
\item $N_{G\circ H}(a,b)=\left(\{a\}\times  N_H(b)\right)\cup \left( N_G(a)\times  V(H)\right)$.
\item  $d_{G\circ H}((a,b),(c,d)) = d_{G}(a,c)$
\item   $d_{G\circ H}((a,b),(a,d)) = \min \{d_{H}(b,d),2\}$.
\end{enumerate}
\end{theorem}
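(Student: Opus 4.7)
The plan is to establish the three assertions in order, since part (i) will be the workhorse for parts (ii) and (iii). Part (i) is a direct unfolding of the definition: a vertex $(x,y)$ lies in $N_{G\circ H}(a,b)$ exactly when either $ax\in E(G)$ (regardless of $y$) or $a=x$ and $by\in E(H)$, which partitions the neighborhood into $N_G(a)\times V(H)$ and $\{a\}\times N_H(b)$.

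For part (ii), I would prove both inequalities. For the upper bound, I would lift a shortest $a$–$c$ path $a=v_0,v_1,\dots,v_k=c$ in $G$ to the walk $(v_0,b),(v_1,y_1),\dots,(v_{k-1},y_{k-1}),(v_k,d)$ in $G\circ H$, choosing the $y_i$ arbitrarily. Each consecutive step is an edge of $G\circ H$ because of the $N_G(a)\times V(H)$ branch of part (i), giving $d_{G\circ H}((a,b),(c,d))\le d_G(a,c)$. For the lower bound I would use projection: any walk $(a,b)=(u_0,z_0),(u_1,z_1),\dots,(u_t,z_t)=(c,d)$ in $G\circ H$ satisfies, for every $i$, either $u_i=u_{i+1}$ or $u_iu_{i+1}\in E(G)$; dropping repeated vertices yields an $a$–$c$ walk in $G$ of length at most $t$, so $d_G(a,c)\le d_{G\circ H}((a,b),(c,d))$.

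For part (iii), I would distinguish cases on $d_H(b,d)$. If $b=d$ the assertion is trivial, and if $b\sim d$ in $H$ then $(a,b)(a,d)$ is an edge by the $\{a\}\times N_H(b)$ branch of part (i), giving distance $1=\min\{1,2\}$. Otherwise $d_H(b,d)\ge 2$, and the right-hand side equals $2$. The upper bound $\le 2$ follows from the two-step detour $(a,b),(a',y),(a,d)$, where $a'\in N_G(a)$ exists because $G$ is nontrivial and connected, and both edges are justified by part (i). The lower bound $\ge 2$ holds because $(a,b)$ and $(a,d)$ are nonadjacent in that case, again via part (i).

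The only subtle step is the projection argument in the lower bound of part (ii); there I must be careful that walks in $G\circ H$ need not alternate in the first coordinate, and that "stationary" steps in $G$ only shorten the induced $G$-walk, which is precisely what gives the inequality in the correct direction. Everything else is a short verification directly from the adjacency rule of the lexicographic product.
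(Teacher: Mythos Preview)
Your proof is correct and complete. Note, however, that the paper does not supply its own proof of this statement: Theorem~\ref{basictoolLexicographic} is quoted from \cite{Hammack2011} as a known tool and is used without argument. Your write-up is the standard verification one finds in that reference---part (i) unwound from the adjacency rule, part (ii) via lifting a $G$-geodesic for the upper bound and first-coordinate projection for the lower bound, and part (iii) by the short case split together with the two-step detour through a neighbour $a'\in N_G(a)$ (whose existence is guaranteed because $G$ is connected and nontrivial). There is nothing to compare against in the paper itself, but what you have written is exactly the argument one would expect.
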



The \textit{Cartesian sum} of two graphs $G$ and $H$, denoted by $G\oplus H$, is the graph with vertex set $V(G\circ H)=V(G)\times V(H)$, where $(a,b)(c,d)\in E(G\oplus H)$ if and only if
\begin{itemize}
\item $ac\in E_1$, or
\item $bd\in E_2$.
\end{itemize}

This notion of graph product was introduced by Ore \cite{Ore1962} in 1962, nevertheless it has passed almost unnoticed and just few results (for instance \cite{Cizek1994,Scheinerman1997}) have been presented about this. The Cartesian sum is also known as the \emph{disjunctive product} \cite{Scheinerman1997} and it is a commutative and associative operation \cite{Hammack2011}.



Next result summarizes some properties about the diameter of the Cartesian sum graph.

\begin{proposition}{\rm \cite{Kuziak2014b}}\label{lem Cart sum diam}
Let $G$ and $H$ be two nontrivial graphs such that at least one of them is noncomplete and let $n\ge 2$ be an integer. Then the following assertions hold.
\begin{enumerate}[{\rm (i)}]
\item $D(G\oplus N_n)=\max\{2,D(G)\}.$
\item If $G$ and $H$ have isolated vertices, then  $D(G\oplus H)=\infty$.
\item If neither $G$ nor  $H$ has isolated vertices, then $D(G\oplus H)=2$.
\item If  $D(H)\le 2$, then $D(G\oplus H)=2$.
\item If $D(H)>2$, $H$ has no isolated vertices and $G$ is a nonempty graph having at least one isolated vertex, then $D(G\oplus H)=3$.
\end{enumerate}
\end{proposition}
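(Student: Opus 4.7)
The plan is to unpack each item directly from the adjacency rule $(a,b)\sim(c,d)$ iff $ac\in E(G)$ or $bd\in E(H)$, then split into cases according to whether the coordinates coincide or whether the relevant vertices are isolated in their factor. For (i), $H=N_n$ has no edges, so adjacency in $G\oplus N_n$ reduces to $ac\in E(G)$. Consequently every walk in $G\oplus N_n$ projects to a walk in $G$, giving $d_{G\oplus N_n}((a,b),(c,d))\ge d_G(a,c)$, while conversely a shortest $a$--$c$ path in $G$ lifts to a walk of the same length when $a\ne c$. For $a=c$ with $b\ne d$ the pair is non-adjacent, but any $a'\in N_G(a)$ yields $(a,b)\sim(a',b)\sim(a,d)$, a walk of length $2$. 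These bounds combine to give $D(G\oplus N_n)=\max\{2,D(G)\}$. For (ii), if $a$ is isolated in $G$ and $b$ is isolated in $H$, then $(a,b)$ has no neighbor in $G\oplus H$, so the graph is disconnected and $D(G\oplus H)=\infty$.

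For (iii) and (iv) the target is $D(G\oplus H)=2$; the matching lower bound comes from the noncompleteness hypothesis, which produces a non-adjacent pair. In (iii), given non-adjacent $(a,b),(c,d)$, pick $x\in N_G(a)$ and $y\in N_H(d)$ (possible since neither factor has isolated vertices); then $(x,y)$ is a common neighbor, linked to $(a,b)$ via $ax\in E(G)$ and to $(c,d)$ via $dy\in E(H)$, while $x\ne a$ and $y\ne d$ make it distinct from both endpoints. In (iv), $D(H)\le 2$ forces $H$ to be connected with no isolated vertex, so the subcase $b=d$ is handled by any $y\in N_H(b)$, giving the common neighbor $(a,y)$; if $b\ne d$ and the pair is non-adjacent then $bd\notin E(H)$, so $D(H)\le 2$ produces a common neighbor $y$ of $b$ and $d$ in $H$, and $(a,y)$ works again. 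Note that this argument uses only $H$-edges, so it copes with the possibility that $G$ itself contains isolated vertices.

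Part (v) is the main point. For the upper bound, take any $(a,b),(c,d)$: if at least one of $a,c$ has a neighbor in $G$, the argument of (iii) gives distance $\le 2$, since $H$ has no isolated vertex. Otherwise both $a$ and $c$ are isolated in $G$; choose an edge $xx'\in E(G)$ (which exists as $G$ is nonempty) and vertices $y\in N_H(b)$, $y'\in N_H(d)$. Then
\[
(a,b)\sim(x,y)\sim(x',y')\sim(c,d)
\]
is a walk of length $3$, whose successive edges are supported by $by\in E(H)$, $xx'\in E(G)$, $y'd\in E(H)$ respectively; moreover $x,x'\notin\{a,c\}$ because $a,c$ are isolated while $x,x'$ are not, so consecutive vertices are distinct. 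For the lower bound, pick an isolated vertex $a$ of $G$ together with $b,d\in V(H)$ satisfying $d_H(b,d)=D(H)\ge 3$. Then $(a,b)$ and $(a,d)$ are non-adjacent, and any common neighbor $(x,y)$ would force either $ax\in E(G)$ (impossible) or both $by,dy\in E(H)$ (contradicting $d_H(b,d)\ge 3$). Hence $D(G\oplus H)\ge 3$, matching the upper bound. The main obstacle lies precisely in this last case of (v), where the absence of $G$-neighbors at the endpoints forces the route to borrow a $G$-edge for its middle step.
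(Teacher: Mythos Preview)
The paper does not supply its own proof of this proposition; it is simply quoted from \cite{Kuziak2014b} and used as input for Proposition~\ref{SRGraphDianCartSumle2}. So there is nothing to compare against, and the relevant question is just whether your argument is correct.

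Your treatment of (ii)--(v) is clean and correct. In (iii) the choice $(x,y)$ with $x\in N_G(a)$, $y\in N_H(d)$ is exactly the right move, and you correctly observe that it only needs one factor to provide a neighbor at each end, which is what lets you recycle it in (v) for the case where at least one of $a,c$ is non-isolated. The length-$3$ walk in (v) using an auxiliary $G$-edge $xx'$ is the standard construction, and your lower bound via an isolated $a$ and $b,d$ with $d_H(b,d)\ge 3$ is fine; note that $D(H)>2$ also covers the possibility $D(H)=\infty$, but your argument only needs a pair at $H$-distance at least $3$, which is available in either case. The lower bound $D\ge 2$ for (iii) and (iv) is also properly justified: noncompleteness of one factor yields $a\ne c$ with $ac\notin E(G)$ (say), and then $(a,b),(c,b)$ are non-adjacent in $G\oplus H$.

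There is one small gap in (i). When you write ``any $a'\in N_G(a)$ yields $(a,b)\sim(a',b)\sim(a,d)$'' you are implicitly assuming $N_G(a)\ne\emptyset$. If $a$ is isolated in $G$, then $(a,b)$ is isolated in $G\oplus N_n$, so the distance between $(a,b)$ and $(a,d)$ is infinite. This does not break the statement, because a nontrivial $G$ with an isolated vertex is disconnected, hence $D(G)=\infty$ and $\max\{2,D(G)\}=\infty$ as well; but you should say this explicitly rather than leave the isolated-vertex case uncovered.
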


The neighborhood of a vertex $(a,b)\in V(G\oplus H)$ is
\begin{equation*}
N_{G\oplus H}(a,b)=(N_G(a)\times V(H)) \cup (V(G)\times N_H(b)).
\end{equation*}

The \emph{corona product} $G\odot H$ is defined as the graph obtained from $G$ and $H$ by taking one copy of $G$ and $n=|V(G)|$ copies of $H$ and joining by an edge each vertex from the $i^{th}$-copy of $H$ with the $i^{th}$-vertex of $G$. We denote by $V=\{v_1,v_2,\dots ,v_n\}$ the set of vertices of $G$ and by $H_i=(V_i,E_i)$ the copy of $H$ such that $v_i\sim v$ for every $v\in V_i$. Observe that $G\odot H$ is connected if and only if $G$ is connected. The concept of corona product of two graphs was first introduced by Frucht and Harary \cite{Frucht1970}.

The following expression for the distance between two vertices $x,y$ of  $G\odot H$ is a direct consequence of the definition of corona product graph.
\begin{equation}\label{DistanceCoronaGraphs}
d_{G\odot H}(x,y)=\left\lbrace
\begin{array}{ll}
d_G(x,y),& x,y\in V;\\ \\
d_G(v_i,v_j)+1,& x=v_i, \, y\in V_j;\\ \\
d_G(v_i,v_j)+2,& x\in V_i, \, y\in V_j,\, i\ne j;\\ \\
\min\{d_{H_i}(x,y),2\}, & x,y\in V_i.
\end{array}
\right.
\end{equation}

\subsection{Cartesian Product and Direct Product of Graphs}

The next result establishes an interesting connection between the strong resolving graph of the Cartesian product of two graphs and the direct product of the strong resolving graphs of its factors. Such result was a powerful tool used in  \cite{RodriguezVelazquez2014a} while studying the strong metric dimension of Cartesian product graphs.

\begin{theorem}{\rm \cite{RodriguezVelazquez2014a}}\label{cartesian_directed}
Let $G$ and $H$ be two connected graphs. Then
$$(G\Box H)_{SR}\cong G_{SR}\times H_{SR}.$$
\end{theorem}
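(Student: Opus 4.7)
The plan is to prove the isomorphism by verifying that the identity map on $V(G)\times V(H)$ restricts to a bijection $\partial(G\Box H)\to \partial(G)\times\partial(H)$ and preserves adjacency. The crux will be the following equivalence: two vertices $(a,b),(c,d)\in V(G\Box H)$ are MMD in $G\Box H$ if and only if $a,c$ are MMD in $G$ and $b,d$ are MMD in $H$. Once this is established, the edge set of $(G\Box H)_{SR}$ coincides by definition with that of $G_{SR}\times H_{SR}$ (since in the direct product, $(a,b)\sim(c,d)$ iff $ac\in E(G_{SR})$ and $bd\in E(H_{SR})$), and similarly $\partial(G\Box H)=\partial(G)\times\partial(H)$, because a vertex belongs to the boundary exactly when it has an MMD partner.

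To prove the equivalence, I would use the two structural facts about the Cartesian product that are recalled just above the theorem: the additive distance formula $d_{G\Box H}((a,b),(c,d))=d_G(a,c)+d_H(b,d)$, and the neighborhood decomposition $N_{G\Box H}(a,b)=(N_G(a)\times\{b\})\cup(\{a\}\times N_H(b))$. Unravelling the definition of ``$(a,b)$ is maximally distant from $(c,d)$'' and combining these two facts, the condition splits cleanly: testing against $G$-neighbors $(a',b)$ becomes $d_G(a',c)\le d_G(a,c)$ for all $a'\in N_G(a)$ (i.e., $a$ is maximally distant from $c$ in $G$), while testing against $H$-neighbors $(a,b')$ becomes $d_H(b',d)\le d_H(b,d)$ for all $b'\in N_H(b)$ (i.e., $b$ is maximally distant from $d$ in $H$). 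Applying the same analysis to the symmetric condition from $(c,d)$ yields both directions simultaneously, so neither implication is really the ``hard'' one; the whole argument reduces to an additive bookkeeping observation.

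The one subtlety I will have to check is that the equivalence handles distinctness correctly, namely that MMD-ness in $G\Box H$ forces $a\neq c$ and $b\neq d$ (so that $(a,b)\neq(c,d)$ translates correctly into a direct-product edge). Under the standing assumption that the factors are nontrivial connected graphs, this is immediate: if $a=c$, then any $G$-neighbor $(a',b)$ of $(a,b)$ would strictly increase the distance to $(a,d)$ by $d_G(a',a)\ge 1$, contradicting maximality; the case $b=d$ is symmetric. Thus the main potential obstacle, namely the interaction of coordinate-wise MMD conditions with the \emph{both}-coordinates-differ condition required for direct-product adjacency, is resolved by the same neighborhood argument that drives the equivalence.
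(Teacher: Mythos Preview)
Your proposal is correct and follows essentially the same approach as the paper: both arguments use the additive distance formula and the layer structure of neighborhoods in $G\Box H$ to reduce the MMD condition in the product to coordinate-wise MMD conditions in the factors, and then read off the edge and vertex sets of the strong resolving graph. Your treatment is in fact slightly more careful than the paper's, which dispatches the key equivalence with ``it is readily seen''; in particular, you explicitly address the point that MMD-ness in $G\Box H$ forces $a\neq c$ and $b\neq d$ (needed for direct-product adjacency), a subtlety the paper's proof leaves implicit.
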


\begin{proof} Let $(g,h), (g',h')$ be any two vertices of $G \Box H$. Then, we have
$$d_{G\Box H}((g,h), (g',h'))= d_G(g,g')+d_H(h,h').$$ Thus, if $g''\sim g'$ and $d_G(g,g'')=d_G(g,g')+1$, then $(g',h') \sim (g'',h')$ and $d_{G \Box H}((g,h),(g'',h'))= d_G(g,g') + d_H(h,h') +1=d_{G\Box H}((g,h), (g',h'))+1$.

Using these observations, it is readily seen that $(g,h)$ and $(g',h')$ are MMD if and only if $g$ and $g'$ are MMD in $G$ and $h$ and $h'$ are MMD in $H$.
Moreover, $(g,h)(g',h') \in E((G\Box H)_{SR})$ if and only if $gg' \in E(G_{SR})$ and $hh' \in E(H_{SR})$. Thus
 \[V((G\Box H)_{SR})=\partial (G\Box H)=\partial(G)\times \partial(H)=V(G_{SR}\times H_{SR}),\]
and \[(G\Box H)_{SR}\cong G_{SR}\times H_{SR}.\]
\end{proof}

Figure \ref{exch Cartesian} illustrates Cartesian product of two cycles of order three and its strong resolving graph. Since the strong resolving graph of $C_3$  is isomorphic to $C_3$, we can easily observe that $(C_3\Box C_3)_{SR}$ is isomorphic to $(C_3)_{SR}\times (C_3)_{SR}$.

\begin{figure}[ht]
\centering
\begin{tabular}{cccccc}
\begin{tikzpicture}

\draw(0,0)--(3,0);
\draw(0,1.5)--(3,1.5);
\draw(0,3)--(3,3);
\draw(0,0)--(0,3);
\draw(1.5,0)--(1.5,3);
\draw(3,0)--(3,3);

\draw (0,0) .. controls (-0.4,1) and (-0.4,2) .. (0,3);
\draw (1.5,0) .. controls (1.1,1) and (1.1,2) .. (1.5,3);
\draw (3,0) .. controls (2.6,1) and (2.6,2) .. (3,3);
\draw (0,0) .. controls (1,0.4) and (2,0.4) .. (3,0);
\draw (0,1.5) .. controls (1,1.9) and (2,1.9) .. (3,1.5);
\draw (0,3) .. controls (1,3.4) and (2,3.4) .. (3,3);

\filldraw[fill opacity=0.9,fill=white]  (0,0) circle (0.12cm);
\filldraw[fill opacity=0.9,fill=white]  (0,1.5) circle (0.12cm);
\filldraw[fill opacity=0.9,fill=white]  (0,3) circle (0.12cm);
\filldraw[fill opacity=0.9,fill=white]  (1.5,0) circle (0.12cm);
\filldraw[fill opacity=0.9,fill=white]  (1.5,1.5) circle (0.12cm);
\filldraw[fill opacity=0.9,fill=white]  (1.5,3) circle (0.12cm);
\filldraw[fill opacity=0.9,fill=white]  (3,0) circle (0.12cm);
\filldraw[fill opacity=0.9,fill=white]  (3,1.5) circle (0.12cm);
\filldraw[fill opacity=0.9,fill=white]  (3,3) circle (0.12cm);

\node [below left] at (0,0) {$a1$};
\node [right] at (0,1.2) {$a2$};
\node [above] at (-0.1,3.1) {$a3$};
\node [below] at (1.5,-0.1) {$b1$};
\node [right] at (1.5,1.2) {$b2$};
\node [right] at (1.5,2.7) {$b3$};
\node [below right] at (3,0) {$c1$};
\node [right] at (3,1.3) {$c2$};
\node [above right] at (3,3) {$c3$};

\end{tikzpicture} & & \hspace*{0.7cm} & &
\begin{tikzpicture}
\draw(0,0)--(3,3);
\draw(3,0)--(0,3);
\draw(1.5,0)--(3,1.5)--(1.5,3)--(0,1.5)--cycle;
\draw(0,0)--(1.5,3);
\draw(1.5,0)--(0,3);
\draw(1.5,0)--(3,3);
\draw(3,0)--(1.5,3);
\draw(0,0)--(3,1.5);
\draw(0,1.5)--(3,0);
\draw(0,1.5)--(3,3);
\draw(0,3)--(3,1.5);

\draw (0,0) .. controls (0.5,1) and (2,2.5) .. (3,3);
\draw (0,3) .. controls (1,2.5) and (2.5,1) .. (3,0);

\filldraw[fill opacity=0.9,fill=white]  (0,0) circle (0.12cm);
\filldraw[fill opacity=0.9,fill=white]  (0,1.5) circle (0.12cm);
\filldraw[fill opacity=0.9,fill=white]  (0,3) circle (0.12cm);
\filldraw[fill opacity=0.9,fill=white]  (1.5,0) circle (0.12cm);
\filldraw[fill opacity=0.9,fill=white]  (1.5,1.5) circle (0.12cm);
\filldraw[fill opacity=0.9,fill=white]  (1.5,3) circle (0.12cm);
\filldraw[fill opacity=0.9,fill=white]  (3,0) circle (0.12cm);
\filldraw[fill opacity=0.9,fill=white]  (3,1.5) circle (0.12cm);
\filldraw[fill opacity=0.9,fill=white]  (3,3) circle (0.12cm);

\node [below left] at (0,0) {$a1$};
\node [left] at (-0.1,1.5) {$a2$};
\node [above left] at (0,3) {$a3$};
\node [below] at (1.5,-0.1) {$b1$};
\node [below] at (1.5,1.4) {$b2$};
\node [above] at (1.5,3.1) {$b3$};
\node [below right] at (3,0) {$c1$};
\node [right] at (3.1,1.5) {$c2$};
\node [above right] at (3,3) {$c3$};

\end{tikzpicture} \\
\end{tabular}
\caption{Cartesian product graph $C_3\Box C_3$ and its strong resolving graph $(C_3\Box C_3)_{SR}$.}
\label{exch Cartesian}
\end{figure}
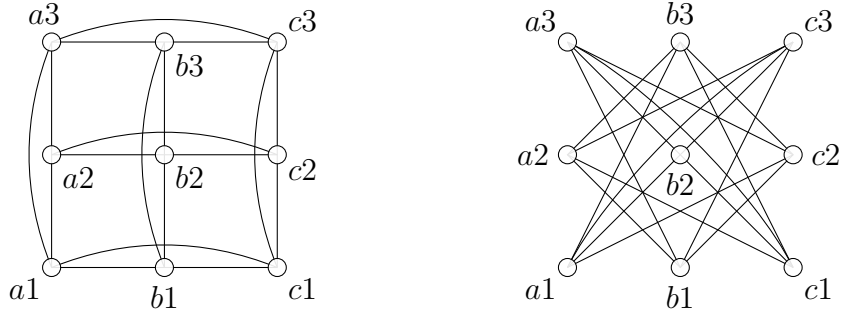

A \emph{matching} on a graph $G$ is a set of edges of $G$ such that no two edges share a vertex in common. A matching is \emph{maximum} if it has the maximum possible cardinality. Moreover, if every vertex of the graph is incident to exactly one edge of the matching, then it is called a \emph{perfect matching}.

The next result, implicitly deduced in  \cite[Proof of Theorem 6]{RodriguezVelazquez2014a},   deals with graphs whose strong resolving graphs are bipartite with a perfect matching.

\begin{theorem}\label{theoremMatching}
Let $G$ and $H$ be two connected graphs such that $H_{SR}$ is bipartite with a perfect matching.
Let $G_i$, $i\in \{1,\dots ,k\}$, be the connected components of $G_{SR}$. If for each $i\in \{1,\dots ,k\}$, $G_i$ is Hamiltonian or $G_i$ has a perfect matching, then $(G\Box H)_{SR}$
 is bipartite and has a perfect matching.
\end{theorem}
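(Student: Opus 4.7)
The strategy is to apply Theorem~\ref{cartesian_directed}, which gives the isomorphism $(G\Box H)_{SR}\cong G_{SR}\times H_{SR}$, and then analyze the direct product on the right. Let $(A,B)$ be a bipartition of $H_{SR}$ and let $M_H=\{a_1b_1,\dots,a_mb_m\}$ with $a_j\in A$, $b_j\in B$ be a perfect matching of $H_{SR}$.

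Bipartiteness follows immediately. Set $V_1=V(G_{SR})\times A$ and $V_2=V(G_{SR})\times B$. If $(g,h)(g',h')\in E(G_{SR}\times H_{SR})$, then by definition $hh'\in E(H_{SR})$, so $\{h,h'\}$ meets both $A$ and $B$. Hence every edge has one endpoint in $V_1$ and one in $V_2$, and so $G_{SR}\times H_{SR}$ is bipartite.

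For the perfect matching, I would decompose the problem. Since the direct product respects connected components in the first factor, $G_{SR}\times H_{SR}$ is the vertex-disjoint union of the subgraphs $G_i\times H_{SR}$. Moreover, if $M_H$ is viewed as a spanning subgraph of $H_{SR}$ (a union of $m$ copies of $K_2$), then $G_i\times M_H$ is a spanning subgraph of $G_i\times H_{SR}$ that equals the disjoint union of $m$ copies of $G_i\times K_2$. Thus it suffices to exhibit a perfect matching of $G_i\times K_2$ for each $i\in\{1,\dots,k\}$; taking the union over $i$ and over the $m$ matching edges will yield a perfect matching of $(G\Box H)_{SR}$.

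The main obstacle is the Hamiltonian case: I would split it. If $G_i$ has a perfect matching $\{u_1v_1,\dots,u_\ell v_\ell\}$, then for every $j$ the two edges $(u_j,a)(v_j,b)$ and $(u_j,b)(v_j,a)$ lie in $G_i\times K_2$ (where $K_2$ has vertices $a,b$), and their union over $j$ is a perfect matching of $G_i\times K_2$. If instead $G_i$ is Hamiltonian along $v_1v_2\cdots v_nv_1$ and $n$ is even, the alternating edges of this cycle form a perfect matching of $G_i$, reducing to the previous case. The delicate subcase is $n$ odd, where $G_i$ need not have a perfect matching: here I would show directly that the sequence
\[
(v_1,a),(v_2,b),(v_3,a),\dots,(v_n,a),(v_1,b),(v_2,a),\dots,(v_n,b),(v_1,a)
\]
is a Hamilton cycle of $G_i\times K_2$ of even length $2n$, whose alternating edges constitute the required perfect matching. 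Combining these perfect matchings across all components $G_i$ and across the $m$ edges of $M_H$ produces a perfect matching of $G_{SR}\times H_{SR}\cong (G\Box H)_{SR}$, completing the argument.
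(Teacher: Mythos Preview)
Your proposal is correct and follows essentially the same approach as the paper: both invoke Theorem~\ref{cartesian_directed}, derive bipartiteness from that of $H_{SR}$, and then build a perfect matching componentwise by combining the Hamiltonian cycle or perfect matching of each $G_i$ with the matching edges of $H_{SR}$. Your extra reduction to $G_i\times K_2$ (via the spanning subgraph $G_i\times M_H$) is a tidy organizational device, and the alternating edges of your Hamilton cycle in the odd case unfold to exactly the same explicit matching $\{(v_k,a)(v_{k+1},b):1\le k\le n\}$ that the paper writes down directly.
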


\begin{proof}
Since $H_{SR}$ is bipartite, $G_{SR}\times H_{SR}$ is bipartite. We show next that $G_{SR}\times H_{SR}$ has a perfect matching. Let  $n_i$ be the order of $G_i$, $i\in \{1,\dots ,k\}$, and let $\{x_1y_1,x_2y_2, ...,x_{|\partial(H)|/2} y_{|\partial(H)|/2}\}\subset E(H_{SR})$ be a perfect matching of  $H_{SR}$.
We distinguish two cases.

\noindent{Case} 1: $G_i$ has a perfect matching. If  $\{u_1v_1,u_2v_2 ...,u_{n_i/2}v_{n_i/2}\}\subset E(G_i)$ is a perfect matching of $G_i$, then the set of edges
\[
\begin{array}{l}
\{(u_1,y_1)\,(v_1,x_1),\; (v_1,y_1)\,(u_1,x_1), ..., (u_{n_i/2}, y_1)\,(v_{n_i/2},x_1),\\
(v_{n_i/2},y_1)\,(u_{n_i/2},x_1), (u_1,y_2)\,(v_1,x_2),\; (v_1,y_2)\,(u_1,x_2), ...,\\
(u_{n_i/2},y_2)\,(v_{n_i/2},x_2),\; (v_{n_i/2},y_2)\,(u_{n_i/2},x_2), \ldots, \\
(u_1,y_{|\partial(H)|/2})\,(v_1,x_{|\partial(H)|/2}),\; (v_1,y_{|\partial(H)|/2})\,(u_1,x_{|\partial(H)|/2}), \ldots, \\
(u_{n_i/2},y_{|\partial(H)|/2})\,(v_{n_i/2},x_{|\partial(H)|/2}),\; (v_{n_i/2}, y_{|\partial(H)|/2})\,(u_{n_i/2},x_{|\partial(H)|/2})\}
\end{array}
 \]
is a perfect matching of $G_i\times H_{SR}$.

\noindent{Case} 2: $G_i$ is Hamiltonian. Let $v_1,v_2,\dots ,v_{n_i},v_1$ be  a Hamiltonian cycle of $G_i$. If $n_i$ is even, then $G_i$ has a perfect matching and this case coincides with Case 1. So we suppose that $n_i$ is odd.  In this case, the set of edges
\[
\begin{array}{l}
\{(v_1,x_1)\,(v_2,y_1),\; (v_2,x_1)\,(v_3,y_1),\dots , (v_{n_i-1},x_1)\,(v_{n_i},y_1),\; (v_{n_i},x_1)\,(v_1,y_1),\\
 (v_1,x_2)\,(v_2,y_2),\; (v_2,x_2)\,(v_3,y_2), ...,(v_{n_i-1},x_2)\,(v_{n_i},y_2),\; (v_{n_i},x_2)\,(v_1,y_2), \ldots,\\
 (v_1,x_{|\partial(H)|/2})\,(v_2,y_{|\partial(H)|/2}),\; (v_2,x_{|\partial(H)|/2})\,(v_3,y_{|\partial(H)|/2}), \ldots,\\
(v_{n_i-1},x_{|\partial(H)|/2})\,(v_{n_i},y_{|\partial(H)|/2}),\; (v_{n_i},x_{|\partial(H)|/2})\,(v_1,y_{|\partial(H)|/2})\}
\end{array}
\]
is a perfect matching of $G_i\times H_{SR}$.

According to Cases 1 and 2 the graph $\bigcup_{i=1}^kG_i\times H_{SR}\cong G_{SR}\times H_{SR}$ has a perfect matching.
\end{proof}

Since $2$-antipodal graphs have strong resolving graphs that are bipartite with a perfect matching, the next result follows from the previous theorem  and Observation \ref{observation1}.

\begin{corollary}
Let $G$ be a $2$-antipodal graph. If $H$ is a $2$-antipodal graph or it is connected and $\partial(H)=\sigma(H)$, then  $(G\Box H)_{SR}$ is bipartite and has a perfect matching.
\end{corollary}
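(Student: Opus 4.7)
The plan is to apply Theorem \ref{theoremMatching}, making essential use of the commutativity $G\Box H\cong H\Box G$ (so that $(G\Box H)_{SR}\cong (H\Box G)_{SR}$) in order to choose which factor will play the role of ``$H$'' in the theorem's hypothesis. First I would record that, since $G$ is $2$-antipodal, Observation \ref{observation1}(b) gives $G_{SR}\cong \bigcup_{i=1}^{n/2} K_2$; hence $G_{SR}$ is bipartite with a perfect matching, and each of its connected components is a $K_2$ that trivially carries a perfect matching.

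If $H$ is also $2$-antipodal, Observation \ref{observation1}(b) applied to $H$ shows that $H_{SR}$ is bipartite with a perfect matching. Both hypotheses of Theorem \ref{theoremMatching} are then satisfied verbatim (with $G$ and $H$ as stated), and the conclusion follows.

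If instead $H$ is connected with $\partial(H)=\sigma(H)$, then Observation \ref{observation1}(a) gives $H_{SR}\cong K_{|\partial(H)|}$, and I would split on $|\partial(H)|$. When $|\partial(H)|=2$, the graph $H_{SR}\cong K_2$ is bipartite with a perfect matching and Theorem \ref{theoremMatching} applies as in the first case. When $|\partial(H)|\ge 3$, however, $H_{SR}$ is complete and hence not bipartite, so $H$ cannot be placed in the designated position. The remedy is to invoke commutativity: pass to $(H\Box G)_{SR}$ and apply Theorem \ref{theoremMatching} with the factors swapped. Then $G_{SR}$ plays the role of ``$H_{SR}$'' (and is indeed bipartite with a perfect matching), while $H_{SR}$ plays the role of ``$G_{SR}$'', and its single connected component $K_{|\partial(H)|}$ with $|\partial(H)|\ge 3$ is Hamiltonian. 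Thus Theorem \ref{theoremMatching} applies and delivers the conclusion.

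The main (and essentially only) obstacle is the subcase $\partial(H)=\sigma(H)$ with $|\partial(H)|\ge 3$, in which $H_{SR}$ fails the bipartiteness required by Theorem \ref{theoremMatching}; the trick of swapping the two factors via commutativity of $\Box$ is what makes the argument go through.
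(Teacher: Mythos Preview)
Your proposal is correct and follows essentially the same approach as the paper: the corollary is derived directly from Theorem~\ref{theoremMatching} together with Observation~\ref{observation1}, using that the $2$-antipodal factor has $G_{SR}\cong\bigcup K_2$ (bipartite with a perfect matching) while the other factor has $H_{SR}$ whose components are either $K_2$'s or a single complete graph, hence Hamiltonian or with a perfect matching. The paper's one-line justification implicitly places the $2$-antipodal graph in the ``$H$'' slot of the theorem from the outset, whereas you make the use of commutativity of $\Box$ explicit; the arguments are otherwise identical.
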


The next well known result characterizing whether Cartesian product graphs are direct product graphs give also an interesting consequence for describing some strong resolving graphs.

\begin{lemma}{\em \cite{Miller1968}}
Let $G$ and $H$ be two connected graphs. Then, $G\Box H\cong G\times H$ if and only if $G\cong H\cong C_{2k+1}$ for some positive integer $k$.
\end{lemma}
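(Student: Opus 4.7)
The plan is to prove both directions by reducing to cycles and then analysing parities and lengths. First, for the ``only if'' direction, I would compare degree sequences. The vertex $(a,b)$ has degree $\delta_G(a)+\delta_H(b)$ in $G\Box H$ and $\delta_G(a)\cdot\delta_H(b)$ in $G\times H$. Matching maximum (and minimum) degrees forces $\Delta(G)+\Delta(H)=\Delta(G)\Delta(H)$ and the analogous equation for $\delta$; over positive integers the equation $x+y=xy$ gives $(x-1)(y-1)=1$, whence $x=y=2$. So $G$ and $H$ are both $2$-regular, and since they are connected they must be cycles, say $G\cong C_n$ and $H\cong C_m$.

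Next I would argue the parity. If both $n$ and $m$ are even, then both factors are bipartite and by Theorem~\ref{dir-connected} (Weichsel) the graph $C_n\times C_m$ is disconnected, contradicting that $C_n\Box C_m$ is connected. If exactly one is odd, a bipartiteness mismatch arises: the Cartesian product $C_n\Box C_m$ is bipartite iff both factors are, while the direct product $C_n\times C_m$ is bipartite iff at least one factor is. Hence one is bipartite and the other is not, which is impossible. Therefore both $n$ and $m$ must be odd.

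The last step is to show $n=m$. The easy sub-case is $\min\{n,m\}=3$: if $n=3$ and $m\ge 5$, then $C_3\Box C_m$ contains triangles (coming from each $C_3$-fibre), whereas $C_3\times C_m$ contains none, since a triangle in $G\times H$ requires a closed walk of length $3$ in each factor and an odd cycle $C_m$ with $m\ge 5$ admits none. For $n,m\ge 5$ odd with $n\ne m$, I would compare a finer invariant, such as the number of $4$-cycles or the spectrum: the eigenvalues of $C_n\Box C_m$ are $2\cos(2\pi j/n)+2\cos(2\pi k/m)$, while those of $C_n\times C_m$ are $4\cos(2\pi j/n)\cos(2\pi k/m)$, and matching these multisets ultimately forces the two index sets (hence $n$ and $m$) to coincide. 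I expect this sub-step to be the main obstacle, since a clean combinatorial proof needs a well-chosen invariant.

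For the ``if'' direction, assume $n=m=2k+1$ is odd. I would exhibit the explicit isomorphism $\varphi:C_n\Box C_n\to C_n\times C_n$ defined by $\varphi(i,j)=(i+j,\,i-j)\bmod n$. The underlying linear map has determinant $-2$, which is a unit in $\mathbb{Z}_n$ because $n$ is odd, so $\varphi$ is a bijection on $\mathbb{Z}_n\times\mathbb{Z}_n$. It sends an edge $(i,j)\sim(i,j\pm 1)$ of $C_n\Box C_n$ to a pair differing by $(\pm 1,\mp 1)$, and an edge $(i,j)\sim(i\pm 1,j)$ to a pair differing by $(\pm 1,\pm 1)$; in both cases the image pair is adjacent in $C_n\times C_n$. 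Since $\varphi$ is a bijection and both graphs are $4$-regular, the edge-preservation is automatically a bijection on edges, yielding the desired graph isomorphism.
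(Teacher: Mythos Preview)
The paper does not supply a proof of this lemma; it is quoted from Miller~\cite{Miller1968} and used as a black box. So there is no ``paper's own proof'' to compare against, and your attempt must be assessed on its own merits.

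Your ``if'' direction is complete and correct: the map $(i,j)\mapsto (i+j,i-j)\bmod n$ is a bijection on $\mathbb{Z}_n^2$ for odd $n$, and the edge check is exactly right.

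In the ``only if'' direction, the degree argument reducing to cycles is sound, as is the parity analysis (via Weichsel for the both-even case and via bipartiteness for the mixed case). The genuine gap is precisely where you flag it: showing $n=m$ when both $n,m\ge 5$ are odd. You gesture at the spectrum, but you do not actually prove that the multisets $\{2\cos(2\pi j/n)+2\cos(2\pi k/m)\}$ and $\{4\cos(2\pi j/n)\cos(2\pi k/m)\}$ coincide only when $n=m$; this is not obvious, and the first few moment comparisons (trace, sum of squares, sum of fourth powers) all agree regardless of $n,m$, so a naive moment argument does not immediately bite. Counting $4$-cycles runs into the same problem: both $C_n\Box C_m$ and $C_n\times C_m$ have the same number of closed walks of length $4$ per vertex when $n,m\ge 5$. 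You would need a sharper invariant---for instance the diameter, the girth of the complement, or the full decomposition of $C_n\times C_m$ as a circulant/Cayley graph on $\mathbb{Z}_n\times\mathbb{Z}_m$ and an appeal to the unique prime factorisation of connected graphs under the Cartesian product---to finish. As it stands, this step is only a plan, not a proof.
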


The characterization above, Theorem \ref{cartesian_directed} and Observation \ref{observation1}, allow us to immediately determine the strong resolving graph of $C_{2k+1}\times C_{2k+1}$.

\begin{remark}For any nonnegative integer $k$,
$$(C_{2k+1}\times C_{2k+1})_{SR}\cong C_{2k+1}\times C_{2k+1}.$$
\end{remark}

We now turn our attention to describing the structure of the strong resolving graphs of some particular cases of direct product graphs, which in contrast to Cartesian product graphs, becomes more challenging and tedious. Moreover, the results are not stated for general direct product graphs, since it is quite frequently not a connected graphs. From now on, we say that a graph $G$ is 2-\emph{MMD free}, or 2MMF for short, if there exists no pair of MMD vertices $u$ and $v$ with $d_G(u,v)=2$. Clearly diameter two graphs are not 2MMF graphs.

We start the first particular case while describing the structure of the strong resolving graph of $G\times K_n$ for any connected graph $G$. From now on we use the following notation. Consider a set of vertices $V$ and two graphs $G$ and $H$ defined over the sets of vertices $U_1\subseteq V$ and $U_2\subseteq V$, respectively. Hence, $G\sqcup H$ is a graph defined over the set of vertices  $V(G\sqcup H)=U_1\cup U_2$ and $E(G\sqcup H)=E(G)\cup E(H)$. Note that $U_1$ and $U_2$ are not necessarily disjoint, as well as $E(G)$ and $E(H)$. For example, consider a set of seven vertices $v_1,v_2,\ldots,v_7$, the cycle $C_6=v_1v_2\ldots v_6v_1$ and the star $S_{1,6}$ with central vertex in $v_7$ and $v_1,v_2,\ldots,v_6$. Thus, the wheel graph $W_{1,6}$ can be obtained as the graph $C_6\sqcup S_{1,6}$. Another interesting example is for instance the strong product graph $G\boxtimes H$ which can be obtained as $(G\Box H) \sqcup (G\times H)$ (notice that in this case the set of vertices of $G\Box H$ and $G\times H$ coincide).

The following result was recently presented in \cite{Kuziak2016a} as follows.

\begin{theorem}{\rm \cite{Kuziak2016a}}
Let $G$ be a connected 2MMF graph of order at least three and let the integer $n\ge 3$. If
$W$ is the subset of $V(G)$ which contains all vertices belonging to a triangle in $G$, $N_{|W|}$ is the empty graph with vertex set $W$ and the graphs $K_n$, $N_n$ are defined over the same set of vertices, then
$$(G\times K_n)_{SR}\cong (G\Box N_n)\sqcup (G_{SR}\circ N_n)\sqcup (N_{|W|}\Box K_n).$$
\end{theorem}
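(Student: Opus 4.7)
My plan is to characterise the MMD pairs of $G\times K_n$ directly and match the resulting edge set with the three summands of the claimed union. The main tool is Remark~\ref{dir-distance}, which expresses distances in a direct product through shortest even and odd walks in each factor. Because $n\ge 3$, the walk distances in $K_n$ are especially simple: $d^{e}_{K_n}(b,d)=0$ if $b=d$ and $2$ otherwise, while $d^{o}_{K_n}(b,d)=3$ if $b=d$ and $1$ otherwise. A useful preliminary observation is that $d^{o}_G(a,a)=3$ precisely when $a$ lies in a triangle of $G$, i.e., when $a\in W$.

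The heart of the argument is a case analysis on a pair $(a,b),(c,d)\in V(G)\times V(K_n)$ according to the coordinate equalities. In the case $a=c$ with $b\ne d$, a direct computation gives $d_{G\times K_n}((a,b),(a,d))=2$, and inspecting the neighbourhood $N_{G\times K_n}(a,d)=N_G(a)\times(V(K_n)\setminus\{d\})$ reduces the MMD condition, in the critical sub-case where a neighbour shares the $K_n$-coordinate $b$, to the local requirement that $d^{e}_G(a,a')\le 2$ for every neighbour $a'$ of $a$ in $G$. The 2MMF hypothesis, which forbids MMD pairs at distance $2$ in $G$, is the tool I would use to match this local triangle requirement with membership in $W$, producing the edges of $N_{|W|}\Box K_n$. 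In the case $a\ne c$ with $b=d$, a symmetric analysis of both maximally-distant conditions, together with 2MMF, yields MMD iff $ac\in E(G)$ or $a,c$ are MMD in $G$; the first alternative contributes the edges of $G\Box N_n$ and the second the fibre-preserving edges of $G_{SR}\circ N_n$. Finally, in the case $a\ne c$ and $b\ne d$, where adjacency in the product is possible, a parallel neighbourhood analysis shows that the pair is MMD exactly when $a,c$ are MMD in $G$, yielding the remaining edges of $G_{SR}\circ N_n$.

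Having characterised all MMD pairs of $G\times K_n$, I would close by verifying that the resulting edge set equals $E(G\Box N_n)\cup E(G_{SR}\circ N_n)\cup E(N_{|W|}\Box K_n)$ and that the vertex sets of both graphs agree (after recording isolated vertices in the natural way), which delivers the stated isomorphism. The main obstacle I anticipate is the case $a=c$: the MMD condition produces a triangle requirement at every edge incident to $a$, and bridging this to the global condition $a\in W$ is exactly where the 2MMF hypothesis should come into play. A secondary delicate point is disentangling, in the case $a\ne c$ with $b=d$, the two different contributions coexisting on the same $K_n$-fibre (adjacency in $G$ versus being MMD in $G$), which requires careful comparisons of even- and odd-walk distances in $G$ via Remark~\ref{dir-distance}.
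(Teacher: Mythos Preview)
The statement you are attempting to prove is in fact false as written; the paper quotes it from \cite{Kuziak2016a} only to immediately note that it fails when $G\cong K_r$ with $r\ge 3$, and replaces it by the corrected Theorem~\ref{direct-complete}. Your plan follows essentially the same case analysis as the paper's proof of the corrected version, and most of it is sound, but the gap lies exactly where the original statement breaks down: in the case $a\ne c$, $b\ne d$, your assertion that ``the pair is MMD exactly when $a,c$ are MMD in $G$'' is not correct. The 2MMF hypothesis rules out MMD pairs at distance~$2$ in $G$, but it says nothing about MMD pairs at distance~$1$, i.e.\ true twins. If $a,c$ are true twins in $G$ (as every pair is in $K_r$), then for $b\ne d$ the vertices $(a,b)$ and $(c,d)$ are adjacent in $G\times K_n$, yet $N_{G\times K_n}(a,b)=N_G(a)\times(V(K_n)\setminus\{b\})$ and $N_{G\times K_n}(c,d)=N_G(c)\times(V(K_n)\setminus\{d\})$ differ (e.g.\ $(a,d)$ lies in the second but not the first), so they are not true twins in the product and hence not MMD. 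Thus $G_{SR}\circ N_n$ contributes edges that are absent from $(G\times K_n)_{SR}$.

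Concretely, for $G=K_r$ one has $G_{SR}\cong K_r$, so $G_{SR}\circ N_n$ contains all cross-fibre edges $(a,b)(c,d)$ with $a\ne c$, $b\ne d$, none of which are edges of $(K_r\times K_n)_{SR}$; the correct answer here is $(K_r\times K_n)_{SR}\cong K_r\Box K_n$ (Corollary~\ref{k_r-times-k_t}). The paper's corrected proof handles this by restricting Case~2 to $g_1\not\sim g_2$, so that 2MMF forces $d_G(g_1,g_2)\ge 3$, and then observes that when $G\cong K_r$ this case is vacuous and the $G_{SR}\circ N_n$ summand must be dropped. Your ``parallel neighbourhood analysis'' would reveal the same obstruction if carried out in full; the missing step is to separate, within the MMD pairs of $G$, those at distance~$1$ from those at distance at least~$3$.
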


If we consider $G$ isomorphic to a complete graph $K_r$, then the result above leads to that $(K_r\times K_n)_{SR}\cong (K_r\Box N_n)\sqcup (K_r\circ N_n)\sqcup (N_r\Box K_n)\cong K_r\circ K_n$, which is a contradiction with a result obtained in \cite{RodriguezVelazquez2014a}. Therefore, we next correct the result and, by completeness, include its whole proof, although part of it almost exactly matches that in \cite{Kuziak2016a}.

\begin{theorem}\label{direct-complete}
Let $G$ be a connected 2MMF graph of order at least three and let the integer $n\ge 3$. If
$W$ is the subset of $V(G)$ which contains all vertices belonging to a triangle in $G$, $N_{|W|}$ is the empty graph with vertex set $W$ and the graphs $K_n$, $N_n$ are defined over the same set of vertices, then
$$(G\times K_n)_{SR}\cong\left\{\begin{array}{ll}
  (G\Box N_n)\sqcup (N_{|W|}\Box K_n),
             & \mbox{if $G\cong K_r$ and $r\ge 3$,} \\
       & \\
  (G\Box N_n)\sqcup (G_{SR}\circ N_n)\sqcup (N_{|W|}\Box K_n), & \mbox{otherwise.}
                         \end{array} \right. $$
\end{theorem}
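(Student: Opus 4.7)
My plan is to determine, for every pair of vertices $(g,h),(g',h') \in V(G\times K_n)$, exactly when they are MMD in $G\times K_n$, and then match the set of MMD pairs to the edges of the three constituent graphs on the right-hand side of the claimed isomorphism. The main tools will be the direct product distance formula of Remark \ref{dir-distance}, the neighbourhood identity $N_{G\times K_n}((g,h)) = N_G(g) \times (V(K_n)\setminus\{h\})$ from \eqref{neigh}, and the walk-distances in $K_n$, namely $d^{e}_{K_n}(h,h)=0$, $d^{o}_{K_n}(h,h)=3$, and $d^{e}_{K_n}(h,h')=2$, $d^{o}_{K_n}(h,h')=1$ for $h\ne h'$, which are available because $n\ge 3$. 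The analysis then splits into three cases according to whether $g=g'$ and whether $h=h'$.

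I would first handle Case (A) in which $g=g'$ and $h\ne h'$: the distance formula gives $d_{G\times K_n}((g,h),(g,h'))=2$, and among the neighbours $(g'',h'')$ of $(g,h)$ only those with $h''=h'$ can realise a distance to $(g,h')$ exceeding $2$, in which case the distance equals $\min\{d^{e}_G(g'',g), 3\}$. Thus the pair is MMD exactly when every edge of $G$ incident to $g$ lies in a triangle, and I would then use the 2MMF hypothesis to upgrade this condition to $g\in W$, matching precisely the edges of $N_{|W|}\Box K_n$. In Case (B), where $g\ne g'$ and $h=h'$, I would split on whether $g \sim g'$ (giving the edges of $G\Box N_n$) or not (giving the equal-$K_n$-coordinate edges of $G_{SR}\circ N_n$), and by a neighbour-by-neighbour verification conclude that the pair is MMD if and only if $g\sim g'$ or $gg'\in E(G_{SR})$. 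Finally, in Case (C), where $g\ne g'$ and $h\ne h'$: if $g\sim g'$ the pair is already adjacent in $G\times K_n$, and the neighbour $(g',h'')$ with $h''\ne h, h'$, available since $n\ge 3$, lies at distance $2$ from $(g',h')$, so the pair is not MMD; and if $g\not\sim g'$, the same style of neighbourhood analysis combined with 2MMF yields that the pair is MMD iff $gg'\in E(G_{SR})$. Cases (B) and (C) together account for all edges of $G_{SR}\circ N_n$ whenever $G\not\cong K_r$.

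The $G\cong K_r$ branch is special because, when $G$ is complete, every Case (C) pair is trivially at distance $1$ and hence not MMD, while the Case (B) contribution from $G_{SR}\circ N_n$ is already captured by $G\Box N_n$ (since being MMD in $K_r$ coincides with being adjacent in $K_r$); removing $G_{SR}\circ N_n$ in this branch therefore prevents overcounting, and this is precisely the correction to the original statement. I expect Case (A) to be the principal obstacle, since reducing the natural condition ``every incident edge of $g$ lies in a triangle'' to the succinct ``$g\in W$'' is exactly where the 2MMF hypothesis has to be exploited in an essential way. A secondary subtlety is that the assumption $n\ge 3$ is indispensable in Case (C): it provides the third $K_n$-coordinate used to rule out MMD for adjacent pairs $(g,h)(g',h')$ with $h\ne h'$.
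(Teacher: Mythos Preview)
Your overall case analysis is the same as the paper's (organised slightly differently), and in Case~(A) you are actually more careful than the paper: you correctly derive that $(g,h)$ and $(g,h')$ are MMD in $G\times K_n$ if and only if \emph{every} edge incident to $g$ lies in a triangle (the paper simply asserts the weaker condition $g\in W$ suffices).

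The genuine gap is the step where you propose to ``use the 2MMF hypothesis to upgrade this condition to $g\in W$''. This implication is false. Take $G$ to be the triangle on $\{g,a,b\}$ together with a pendant path $g\!-\!c\!-\!d\!-\!e$. The distance--$2$ pairs in $G$ are $(g,d),(a,c),(b,c),(c,e)$, and in each of them one endpoint has a neighbour strictly farther from the other endpoint (e.g.\ $d\in N(c)$ with $d_G(a,d)=3>2$), so none is MMD and $G$ is 2MMF. Here $g\in W$, yet the edge $gc$ lies in no triangle; and indeed in $G\times K_3$ the vertex $(c,2)\in N_{G\times K_3}(g,1)$ satisfies $d_{G\times K_3}((c,2),(g,2))=\min\{d^e_G(c,g),3\}=3>2$, so $(g,1)$ and $(g,2)$ are \emph{not} MMD. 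Thus the $N_{|W|}\Box K_n$ piece overcounts, and the statement as written fails for this $G$.

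In other words, the obstacle you flagged in Case~(A) is real and cannot be overcome: the 2MMF hypothesis does not force every edge at a triangle vertex to lie in a triangle. The paper's proof contains the same error in disguise (it claims, for $g_1\in W$, that $d_{G\times K_n}((g_1,h_2),(g'',h))\le 2$ for \emph{every} $g''\in N_G(g_1)$, which is exactly the step that breaks for $g''=c$ above). Your derived condition in Case~(A) is the correct one; the set $W$ in the theorem should be replaced by $\{g\in V(G):\text{every edge at }g\text{ lies in a triangle}\}$.
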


\begin{proof}
Let $(g_1,h_1),(g_2,h_2)$ be two different vertices of $G\times K_n$. We first consider a triangle free graph $G$
and analyze the following possible situations.

\noindent Case 1: $g_1\ne g_2$, $h_1=h_2$ and $g_1\sim g_2$. Hence
$d_{G\times K_n}((g_1,h_1),(g_2,h_1))=3$, since $G$ is triangle free. Also, we observe that
$N_{G\times K_n}(g_1,h_1)=N_G(g_1)\times (V(K_n)-\{h_1\})$ and for every vertex $g\in N_G(g_1)$
and every $h\in V(K_n)-\{h_1\}$ it follows, $d_{G\times K_n}((g_2,h_1),(g,h))=2$. Similarly,
$N_{G\times K_n}(g_2,h_1)=N_G(g_2)\times (V(K_n)-\{h_1\})$ and for every vertex $g\in N_G(g_2)$
and every $h\in V(K_n)-\{h_1\}$ it follows, $d_{G\times K_n}((g_1,h_1),(g,h))=2$. Thus,
$(g_1,h_1)$ and $(g_2,h_2)$ are MMD in $G\times K_n$.

As a consequence of Case 1, for any vertex $h\in V(K_n)$ and any two adjacent vertices
$g,g'$ of $G$, it follows that $(g,h)$ and $(g',h)$ are MMD in $G\times K_n$.
Therefore, the strong resolving graph $(G\times K_n)_{SR}$ contains $n$ copies of $G$ as subgraphs,
or equivalently the graph $G\Box N_n$. We continue describing the other part of $(G\times K_n)_{SR}$.

\noindent Case 2: $g_1\ne g_2$, $g_1\not\sim g_2$ and $g_1,g_2$ are MMD in $G$.
Hence, it follows by Remark \ref{dir-distance} that $d_{G\times K_n}((g_1,h_1),(g_2,h_2))=d_G(g_1,g_2)\geq 3$, since
$G$ is 2MMF graph. It is straightforward
to observe that $(g_1,h_1)$ and $(g_2,h_2)$ are MMD in $G\times K_n$.

As a consequence of Case 2, for any vertices $h,h'\in V(K_n)$ and any two mutually maximally
distant vertices $g,g'$ of $G$, it follows that $(g,h)$ and $(g',h')$ are MMD
in $G\times K_n$. Therefore, the strong resolving graph $(G\times K_n)_{SR}$ contains a subgraph
isomorphic to the lexicographic product of $G_{SR}$ and $N_n$. Next we show that $(G\times K_n)_{SR}$
has no more edges than those ones described until now, which leads to
$(G\times K_n)_{SR}\cong (G\Box N_n)\sqcup (G_{SR}\circ N_n)$.

\noindent Case 3: $g_1\ne g_2$, $g_1\not\sim g_2$ and $g_1,g_2$ are not MMD
in $G$. Similarly to Case 2, it clearly follows that $(g_1,h_1)$ and $(g_2,h_2)$ are not mutually
maximally distant in $G\times K_n$, since for a neighbor $g_3$ of $g_2$ with $d_{G}(g_1,g_3)>d_{G}(g_1,g_2)$
we obtain $d_{G\times K_n}((g_1,h_1),(g_2,h_2))<d_{G\times K_n}((g_1,h_1),(g_3,h))$ for any $h\ne h_2$.

\noindent Case 4: $g_1\ne g_2$, $h_1\ne h_2$ and $g_1\sim g_2$. Hence
$d_{G\times K_n}((g_1,h_1),(g_2,h_2))=1$. Since $n\ge 3$, for any vertex $h_3\notin \{h_1,h_2\}$ we have
that $(g_1,h_3)\in N_{G\times K_n}(g_2,h_2)$ and $d_{G\times K_n}((g_1,h_1),(g_1,h_3))=2$. Thus,
$(g_1,h_1)$ and $(g_2,h_2)$ are not MMD in $G\times K_n$.

\noindent Case 5: $g_1=g_2$. Hence, $d_{G\times K_n}((g_1,h_1),(g_1,h_2))=2$. Since $G$ has order
greater than one, there exists a vertex $g_3\in N_G(g_1)$ and we observe that the vertex
$(g_3,h_1)\in N_{G\times K_n}(g_1,h_2)$. Also, as $G$ is triangle free,
$d_{G\times K_n}((g_1,h_1),(g_3,h_1))=3$. Thus, $(g_1,h_1)$ and $(g_2,h_2)$ are not mutually
maximally distant in $G\times K_n$.

So, if $G$ is triangle free, then we have that $(G\times K_n)_{SR}\cong (G\Box N_n)\sqcup (G_{SR}\circ N_n)$. We consider now that $W$ is the set of vertices of $G$ belonging to a triangle and $|W|=t$. We notice that the fact that there exist vertices belonging to a triangle in $G$ only affects Case 5 and Case 2 (this case is impossible when $G\cong K_r$, $r\ge 3$), and actually it also has an effect on Case 1, but there are no changes in conclusions. That is, if $g_1=g_2$ and $g_1\in W$, then as above $d_{G\times K_n}((g_1,h_1),(g_1,h_2))=2$. However, we have that $N_{G\times K_n}(g_1,h_1)=N_G(g_1)\times (V(K_n)-\{h_1\})$ and for every vertex
$g\in N_G(g_1)$ and every $h\in V(K_n)-\{h_1\}$ it follows, $d_{G\times K_n}((g_2,h_2),(g,h))\le 2$. Similarly, $N_{G\times K_n}(g_2,h_2)=N_G(g_2)\times (V(K_n)-\{h_2\})$ and for every vertex $g\in N_G(g_2)$ and every $h\in V(K_n)-\{h_2\}$ it follows, $d_{G\times K_n}((g_1,h_1),(g,h))\le 2$. Thus, $(g_1,h_1)$ and $(g_1,h_2)$ are MMD in $G\times K_n$.

As a consequence, given a vertex $g\in W$, for any two vertices $h,h'\in V(K_n)$ it follows that $(g,h)$ and $(g,h')$ are MMD in $G\times K_n$. Therefore, the strong resolving graph $(G\times K_n)_{SR}$ contains some other edges than that ones already described for the case of triangle free graphs. These are from $t=|W|$ subgraphs isomorphic to $K_n$, each one corresponding to a vertex in $W$, which is equivalent to the Cartesian product of $N_{|W|}$ (having vertex set $W$) and $K_n$. Moreover, if $G\cong K_r$, $r\ge 3$, then $(G\times K_n)_{SR}\cong (G\Box N_n)\sqcup (N_{|W|}\Box K_n)$, since the situation like in Case 2 is impossible.
\end{proof}

In Figure \ref{Figure-graphs-direct} we exemplify the theorem above. There we give a direct product graphs and its strong resolving graph, drawn in such way we can see all the three subgraphs appearing in the union given in Theorem \ref{direct-complete}, for the case $G$ is not a complete graph.

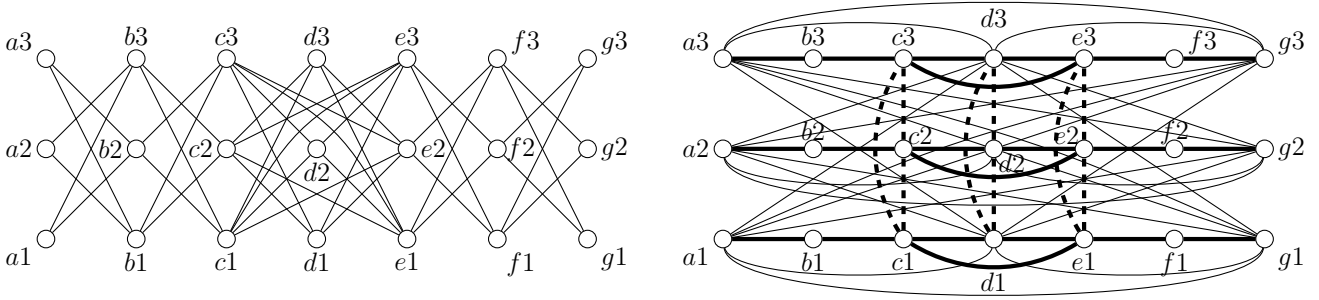
\begin{figure}[ht]
  \centering
\begin{tikzpicture}[scale=.6, transform shape]
\node [draw, shape=circle] (aa1) at  (0,0) {};
\node [draw, shape=circle] (bb1) at  (2,0) {};
\node [draw, shape=circle] (cc1) at  (4,0) {};
\node [draw, shape=circle] (dd1) at  (6,0) {};
\node [draw, shape=circle] (ee1) at  (8,0) {};
\node [draw, shape=circle] (ff1) at  (10,0) {};
\node [draw, shape=circle] (gg1) at  (12,0) {};

\node [draw, shape=circle] (aa2) at  (0,2) {};
\node [draw, shape=circle] (bb2) at  (2,2) {};
\node [draw, shape=circle] (cc2) at  (4,2) {};
\node [draw, shape=circle] (dd2) at  (6,2) {};
\node [draw, shape=circle] (ee2) at  (8,2) {};
\node [draw, shape=circle] (ff2) at  (10,2) {};
\node [draw, shape=circle] (gg2) at  (12,2) {};

\node [draw, shape=circle] (aa3) at  (0,4) {};
\node [draw, shape=circle] (bb3) at  (2,4) {};
\node [draw, shape=circle] (cc3) at  (4,4) {};
\node [draw, shape=circle] (dd3) at  (6,4) {};
\node [draw, shape=circle] (ee3) at  (8,4) {};
\node [draw, shape=circle] (ff3) at  (10,4) {};
\node [draw, shape=circle] (gg3) at  (12,4) {};

\draw(aa1)--(bb2);
\draw(aa1)--(bb3);
\draw(aa2)--(bb1);
\draw(aa2)--(bb3);
\draw(aa3)--(bb1);
\draw(aa3)--(bb2);

\draw(bb1)--(cc2);
\draw(bb1)--(cc3);
\draw(bb2)--(cc1);
\draw(bb2)--(cc3);
\draw(bb3)--(cc1);
\draw(bb3)--(cc2);

\draw(cc1)--(dd2);
\draw(cc1)--(dd3);
\draw(cc2)--(dd1);
\draw(cc2)--(dd3);
\draw(cc3)--(dd1);
\draw(cc3)--(dd2);

\draw(dd1)--(ee2);
\draw(dd1)--(ee3);
\draw(dd2)--(ee1);
\draw(dd2)--(ee3);
\draw(dd3)--(ee1);
\draw(dd3)--(ee2);

\draw(ee1)--(ff2);
\draw(ee1)--(ff3);
\draw(ee2)--(ff1);
\draw(ee2)--(ff3);
\draw(ee3)--(ff1);
\draw(ee3)--(ff2);

\draw(ff1)--(gg2);
\draw(ff1)--(gg3);
\draw(ff2)--(gg1);
\draw(ff2)--(gg3);
\draw(ff3)--(gg1);
\draw(ff3)--(gg2);

\draw(cc1)--(ee2);
\draw(cc2)--(ee1);
\draw(cc2)--(ee3);
\draw(cc3)--(ee2);
\draw(cc1) .. controls (5.5,2.5) .. (ee3);
\draw(cc3) .. controls (6.5,2.5) .. (ee1);

\node [below left, scale=1.4] at (-0.1,0) {$a1$};
\node [left, scale=1.4] at (-0.1,2) {$a2$};
\node [above left, scale=1.4] at (-0.1,4) {$a3$};

\node [below, scale=1.4] at (2,-0.1) {$b1$};
\node [left, scale=1.4] at (1.9,2) {$b2$};
\node [above, scale=1.4] at (2,4.1) {$b3$};

\node [below, scale=1.4] at (4,-0.1) {$c1$};
\node [left, scale=1.4] at (3.9,2) {$c2$};
\node [above, scale=1.4] at (4,4.1) {$c3$};

\node [below, scale=1.4] at (6,-0.1) {$d1$};
\node [below, scale=1.4] at (6,1.9) {$d2$};
\node [above, scale=1.4] at (6,4.1) {$d3$};

\node [below, scale=1.4] at (8,-0.1) {$e1$};
\node [right, scale=1.4] at (8.1,2) {$e2$};
\node [above, scale=1.4] at (8,4.1) {$e3$};

\node [below right, scale=1.4] at (10,-0.1) {$f1$};
\node [right, scale=1.4] at (10,2) {$f2$};
\node [above right, scale=1.4] at (10.1,3.9) {$f3$};

\node [below right, scale=1.4] at (12.1,0) {$g1$};
\node [right, scale=1.4] at (12.1,2) {$g2$};
\node [above right, scale=1.4] at (12.1,3.9) {$g3$};

\node [draw, shape=circle] (a1) at  (15,0) {};
\node [draw, shape=circle] (b1) at  (17,0) {};
\node [draw, shape=circle] (c1) at  (19,0) {};
\node [draw, shape=circle] (d1) at  (21,0) {};
\node [draw, shape=circle] (e1) at  (23,0) {};
\node [draw, shape=circle] (f1) at  (25,0) {};
\node [draw, shape=circle] (g1) at  (27,0) {};

\node [draw, shape=circle] (a2) at  (15,2) {};
\node [draw, shape=circle] (b2) at  (17,2) {};
\node [draw, shape=circle] (c2) at  (19,2) {};
\node [draw, shape=circle] (d2) at  (21,2) {};
\node [draw, shape=circle] (e2) at  (23,2) {};
\node [draw, shape=circle] (f2) at  (25,2) {};
\node [draw, shape=circle] (g2) at  (27,2) {};

\node [draw, shape=circle] (a3) at  (15,4) {};
\node [draw, shape=circle] (b3) at  (17,4) {};
\node [draw, shape=circle] (c3) at  (19,4) {};
\node [draw, shape=circle] (d3) at  (21,4) {};
\node [draw, shape=circle] (e3) at  (23,4) {};
\node [draw, shape=circle] (f3) at  (25,4) {};
\node [draw, shape=circle] (g3) at  (27,4) {};

\node [below left, scale=1.4] at (14.9,0) {$a1$};
\node [left, scale=1.4] at (14.9,2) {$a2$};
\node [above left, scale=1.4] at (14.9,4) {$a3$};
\node [below, scale=1.4] at (17,-0.1) {$b1$};
\node [above, scale=1.4] at (17,2) {$b2$};
\node [above, scale=1.4] at (17,4.1) {$b3$};
\node [below, scale=1.4] at (19,-0.1) {$c1$};
\node [above right, scale=1.4] at (18.9,1.9) {$c2$};
\node [above, scale=1.4] at (19,4.1) {$c3$};
\node [below right, scale=1.4] at (20.5,-0.55) {$d1$};
\node [below right, scale=1.4] at (20.9,2.1) {$d2$};
\node [above, scale=1.4] at (21,4.5) {$d3$};
\node [below, scale=1.4] at (23,-0.1) {$e1$};
\node [above left, scale=1.4] at (23.1,1.9) {$e2$};
\node [above, scale=1.4] at (23,4.1) {$e3$};
\node [below, scale=1.4] at (25,-0.1) {$f1$};
\node [above, scale=1.4] at (25,1.9) {$f2$};
\node [above right, scale=1.4] at (25.1,3.9) {$f3$};
\node [below right, scale=1.4] at (27.1,0) {$g1$};
\node [right, scale=1.4] at (27.1,2) {$g2$};
\node [above right, scale=1.4] at (27.1,3.9) {$g3$};

\draw[ultra thick](a1)--(b1)--(c1)--(d1)--(e1)--(f1)--(g1);
\draw[ultra thick](a2)--(b2)--(c2)--(d2)--(e2)--(f2)--(g2);
\draw[ultra thick](a3)--(b3)--(c3)--(d3)--(e3)--(f3)--(g3);

\draw[ultra thick](c1) .. controls (20.3,-0.8) and (21.7,-0.8) .. (e1);
\draw[ultra thick](c2) .. controls (20.3,1.2) and (21.7,1.2) .. (e2);
\draw[ultra thick](c3) .. controls (20.3,3.2) and (21.7,3.2) .. (e3);

\draw[dashed, ultra thick](c1)--(c2)--(c3);
\draw[dashed, ultra thick](d1)--(d2)--(d3);
\draw[dashed, ultra thick](e1)--(e2)--(e3);

\draw[dashed, ultra thick](c1) .. controls (18.2,1.3) and (18.2,2.7) .. (c3);
\draw[dashed, ultra thick](d1) .. controls (20.2,1.3) and (20.2,2.7) .. (d3);
\draw[dashed, ultra thick](e1) .. controls (22.2,1.3) and (22.2,2.7) .. (e3);

\draw(a1)--(d2);
\draw(a1)--(d3);
\draw(a2)--(d1);
\draw(a2)--(d3);
\draw(a3)--(d1);
\draw(a3)--(d2);

\draw(d1)--(g2);
\draw(d1)--(g3);
\draw(d2)--(g1);
\draw(d2)--(g3);
\draw(d3)--(g1);
\draw(d3)--(g2);

\draw(a1)--(g2);
\draw(a2)--(g1);
\draw(a2)--(g3);
\draw(a3)--(g2);

\draw (a1) .. controls (20,2.2) .. (g3);
\draw (a3) .. controls (22,2.2) .. (g1);

\draw(a1) .. controls (15.2,-1) and (20.8,-1) .. (d1);
\draw(a1) .. controls (15.2,-1.6) and (26.8,-1.6) .. (g1);
\draw(d1) .. controls (21.2,-1) and (26.8,-1) .. (g1);

\draw(a2) .. controls (15.2,1) and (20.8,1) .. (d2);
\draw(a2) .. controls (15.2,0.4) and (26.8,0.4) .. (g2);
\draw(d2) .. controls (21.2,1) and (26.8,1) .. (g2);

\draw(a3) .. controls (15.2,5) and (20.8,5) .. (d3);
\draw(a3) .. controls (15.2,5.6) and (26.8,5.6) .. (g3);
\draw(d3) .. controls (21.2,5) and (26.8,5) .. (g3);
\end{tikzpicture}
\caption{The direct product $H_7\times K_3$ and its strong resolving graph, where $H_7$ is obtained from a path $P_7=abcdefg$ by adding the edge $ce$. According to Theorem \ref{direct-complete}, notice that $W=\{c,d,e\}$. In the strong resolving graph $(H_7\times K_3)_{SR}$: the edges in bold correspond to the subgraph $H_7\Box N_3$ ($N_3$ has vertex set $\{1,2,3\}$); the dashed edges to the subgraph $N_{3}\Box K_3$ ($N_3$ has vertex set $W=\{c,d,e\}$); and the remaining edges to the subgraph $((H_7)_{SR}\circ N_3)\cong (K_3\circ N_3)$ ($K_3$ has vertex set $\{a,d,g\}$).} \label{Figure-graphs-direct}
\end{figure}

For the particular case when $G$ is isomorphic to a complete graph, Theorem \ref{direct-complete} leads to the next corollary.

\begin{corollary}{\rm \cite{RodriguezVelazquez2014a}}\label{k_r-times-k_t}
For any positive integers $r,t\ge 3$,
$$(K_r\times K_t)_{SR}\cong K_r\Box K_t.$$
\end{corollary}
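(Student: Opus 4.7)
The plan is to apply Theorem \ref{direct-complete} directly to the case $G=K_r$, $n=t$, and then recognize the resulting union as a Cartesian product.

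First I would verify that the hypotheses of Theorem \ref{direct-complete} are satisfied: $K_r$ is connected and has order $r\ge 3$, and it is vacuously 2MMF since $D(K_r)=1$ means no two distinct vertices are at distance $2$, so no MMD pair can lie at distance $2$. Next I would identify the set $W$ of vertices belonging to a triangle of $K_r$; since $r\ge 3$, every vertex of $K_r$ lies in some triangle, so $W=V(K_r)$ and $|W|=r$. Applying the ``$G\cong K_r$'' branch of Theorem \ref{direct-complete} then yields
\[
(K_r\times K_t)_{SR}\cong (K_r\Box N_t)\sqcup (N_r\Box K_t).
\]

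The remaining step is to identify this union with $K_r\Box K_t$. Both graphs in the union are defined on the common vertex set $V(K_r)\times V(K_t)$, so by the definition of the $\sqcup$ operation the resulting graph has this same vertex set, with edge set equal to the union of the two edge sets. Since $N_t$ is edgeless, the edges of $K_r\Box N_t$ are exactly the pairs $\{(a,b),(c,b)\}$ with $a\ne c$; symmetrically, the edges of $N_r\Box K_t$ are exactly the pairs $\{(a,b),(a,d)\}$ with $b\ne d$. The union of these two edge sets is precisely the edge set of $K_r\Box K_t$, which completes the isomorphism.

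There is essentially no obstacle here: the content of the corollary is that when $G$ is complete the ``lexicographic-product'' term of Theorem \ref{direct-complete} disappears, and the two remaining Cartesian-product terms, sharing a common vertex set, glue together into a single Cartesian product because the two edge classes they contribute are exactly the two edge classes of $K_r\Box K_t$. The only point requiring a moment's care is checking that $K_r$ is 2MMF, which is immediate from $D(K_r)=1$.
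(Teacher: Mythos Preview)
Your proof is correct and follows exactly the route the paper intends: the corollary is stated immediately after Theorem \ref{direct-complete} as the specialization $G\cong K_r$, and you have filled in the verification of the 2MMF hypothesis, the identification $W=V(K_r)$, and the recognition $(K_r\Box N_t)\sqcup(N_r\Box K_t)\cong K_r\Box K_t$ precisely as required.
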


From Theorem \ref{cartesian_directed} and Corollary \ref{k_r-times-k_t} we obtain the following.

\begin{corollary}
For any positive integers $r,t\ge 3$,
$$\left((K_r\times K_t)_{SR}\right)_{SR}\cong K_r\times K_t.$$
\end{corollary}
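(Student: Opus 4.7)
The plan is to simply chain together three results already established in the excerpt. First I would invoke Corollary~\ref{k_r-times-k_t}, which tells us $(K_r \times K_t)_{SR} \cong K_r \Box K_t$. So the problem reduces to computing the strong resolving graph of $K_r \Box K_t$.

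Next I would apply Theorem~\ref{cartesian_directed} to the connected graphs $K_r$ and $K_t$ to obtain
\[
(K_r \Box K_t)_{SR} \cong (K_r)_{SR} \times (K_t)_{SR}.
\]
Finally, by Observation~\ref{observation1}(a), since $\partial(K_n) = \sigma(K_n) = V(K_n)$ for every $n \geq 2$, we have $(K_n)_{SR} \cong K_n$. Substituting $n = r$ and $n = t$ yields $(K_r)_{SR} \times (K_t)_{SR} \cong K_r \times K_t$, and composing the three isomorphisms gives the result.

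There is no real obstacle here; everything follows by direct concatenation of existing results. The only thing to double-check is that Theorem~\ref{cartesian_directed} is applicable, which requires connectedness of both factors. Since $K_r$ and $K_t$ are trivially connected for $r,t \geq 3$, the hypothesis is satisfied and the chain of isomorphisms goes through without further work.
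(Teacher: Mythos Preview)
Your proposal is correct and follows exactly the paper's approach: the paper simply states that the corollary follows from Theorem~\ref{cartesian_directed} and Corollary~\ref{k_r-times-k_t}, and your chain of isomorphisms spells this out explicitly, including the (implicit in the paper) use of $(K_n)_{SR}\cong K_n$ from Observation~\ref{observation1}(a).
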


The following result was also implicitly deduced in \cite[Proof of Theorem 37]{RodriguezVelazquez2014a}, although we now present part of it by using the ideas of Theorem \ref{direct-complete}. To this end, given an odd cycle $C_n = v_0v_1\ldots v_n v_0$, by $C_n^{*}$ we mean the cycle $v_0v_{\left\lfloor n/2\right\rfloor}v_{2\cdot\left\lfloor n/2\right\rfloor}v_{3\cdot\left\lfloor n/2\right\rfloor}\ldots v_{(n-1)\cdot\left\lfloor n/2\right\rfloor}v_0$ where the multiplication operation $x\cdot\left\lfloor n/2\right\rfloor$ with $x\in \{1,\ldots,n-1\}$ is done modulo $n$.

\begin{proposition}\label{c_r-times-k_t}
Let $r\ge 4$ and $t\ge 3$ be positive integers. Let $V(K_t)=V(N_t)=\{v_1,v_2,\dots ,v_{t}\}$ and $C_r=u_0 u_1 \ldots u_{r-1}u_0$ Then the following assertions hold.
\begin{enumerate}[{\rm (i)}]
\item If $r\in \{4,5\}$, then $(C_r\times K_t)_{SR}\cong \bigcup_{i=1}^t K_r$.

\item If $r\ge 6$ is even and $K_2^{(i)}$ is a complete graph on the two vertices $u_i,u_{i+r/2}$ with $i\in \{0,\ldots, r/2-1\}$, then
$$(C_r\times K_t)_{SR}\cong (C_r\Box N_t)\sqcup \left(\bigsqcup_{i=0}^{r/2-1} (K_2^{(i)}\circ N_t)\right).$$

\item If $r\ge 7$, then
$$(C_r\times K_t)_{SR}\cong (C_r\Box N_t)\sqcup (C_r^{*}\circ N_t).$$
\end{enumerate}
\end{proposition}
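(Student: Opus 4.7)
The plan is to treat parts (ii) and (iii) as applications of Theorem~\ref{direct-complete}, and to handle (i) by a direct argument because its hypothesis fails. Since $C_r$ is triangle-free for every $r\ge 4$, the set $W$ appearing in that theorem is empty, so the summand $N_{|W|}\Box K_n$ vanishes. Moreover, for $r\ge 6$ every MMD pair of $C_r$ lies at distance $\lfloor r/2\rfloor\ge 3$, hence $C_r$ is 2MMF and Theorem~\ref{direct-complete} reduces to
$$
(C_r\times K_t)_{SR}\cong (C_r\Box N_t)\sqcup ((C_r)_{SR}\circ N_t);
$$
thereby (ii) and (iii) boil down to identifying the factor $(C_r)_{SR}$. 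For $r\in\{4,5\}$, by contrast, the diameter of $C_r$ is two and its MMD pairs are at distance two, so $C_r$ is not 2MMF and the theorem cannot be invoked.

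For part (ii), with even $r\ge 6$, Observation~\ref{observation1}(b) supplies $(C_r)_{SR}\cong\bigcup_{i=0}^{r/2-1}K_2^{(i)}$, where $K_2^{(i)}$ joins the antipodal pair $u_i,u_{i+r/2}$; since the lexicographic product with $N_t$ distributes on the left over disjoint unions, $(C_r)_{SR}\circ N_t\cong\bigsqcup_{i=0}^{r/2-1}(K_2^{(i)}\circ N_t)$, which proves (ii). For part (iii), with odd $r\ge 7$, each $u_i$ is MMD in $C_r$ precisely with its two diametral partners $u_{i+\lfloor r/2\rfloor}$ and $u_{i+\lceil r/2\rceil}$ (indices mod $r$); the resulting $2$-regular graph is Hamiltonian since $\gcd(\lfloor r/2\rfloor,r)=1$, and starting at $u_0$ and repeatedly stepping by $\lfloor r/2\rfloor$ visits every vertex exactly once before closing, yielding $(C_r)_{SR}\cong C_r^{*}$ and hence (iii).

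The main technical step is case (i). Combining Remark~\ref{dir-distance} with the values $d^{e}_{K_t}(v,v)=0$, $d^{o}_{K_t}(v,v)=3$, $d^{e}_{K_t}(v,w)=2$ and $d^{o}_{K_t}(v,w)=1$ for $v\ne w$ (which need $t\ge 3$), together with the even/odd walk distances in $C_r$, I would tabulate $d_{C_r\times K_t}((u_i,v_j),(u_k,v_\ell))$ case by case: the diameter equals three and is realized exactly by pairs $(u_i,v),(u_k,v)$ with $u_i\sim u_k$ in $C_r$. Two claims then close the argument. Claim (a): if $j=\ell$ and $i\ne k$, the pair is MMD, since diametral pairs are automatic and when $d_{C_r}(u_i,u_k)=2$ every neighbor $(u_m,v_p)$ of $(u_i,v_j)$ satisfies $p\ne j=\ell$, hence lies at distance at most two from $(u_k,v_\ell)$. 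Claim (b): if $j\ne\ell$, some neighbor of $(u_i,v_j)$ lies strictly farther from $(u_k,v_\ell)$ than the pair itself; concretely, when $d_{C_r}(u_i,u_k)\in\{0,2\}$ one chooses $(u_m,v_\ell)$ with $u_m$ a common neighbor of $u_i$ and $u_k$ (such a neighbor always exists in $C_4$ and $C_5$ for these distances) to reach product distance three, and when $d_{C_r}(u_i,u_k)=1$ one takes $u_m$ as the other neighbor of $u_i$, yielding product distance two versus a pair distance of one. Grouping the surviving MMD edges by the common second coordinate then produces $t$ disjoint copies of $K_r$. The chief obstacle is precisely this uniform bookkeeping in (i), where $C_4$ (bipartite) and $C_5$ (nonbipartite) must be handled simultaneously and each subcase of (b) must be verified to supply the required witness neighbor.
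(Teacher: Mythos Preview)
Your proposal is correct and follows essentially the same approach as the paper: parts (ii) and (iii) are deduced from Theorem~\ref{direct-complete} (the paper simply writes ``direct consequence of Theorem~\ref{direct-complete}'' and leaves the identification of $(C_r)_{SR}$ implicit, whereas you spell it out via Observation~\ref{observation1}(b) and the explicit description of $C_r^{*}$), while part (i) is handled by a direct case analysis on the pair $(d_{C_r}(u_i,u_k),\,[v_j=v_\ell])$. Your organization of (i) into Claims (a) and (b) is a slight repackaging of the paper's Cases 1--4, but the underlying computations and witness vertices coincide; the only cosmetic looseness is that in the subcase $d_{C_r}(u_i,u_k)=1$ of Claim~(b) you name only the first coordinate $u_m$ of the witness neighbor, but any second coordinate $v_p\ne v_j$ works, as you implicitly intend.
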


\begin{proof}
Let $V(K_t)=\{v_1,v_2,\dots ,v_{t}\}$ and $V(C_r)=\{u_0,u_1,\dots ,u_{r-1}\}$, where $u_i\sim u_{i+1}$ for every $i\in \{0,\dots , r-1\}$ and $u_{r-1}\sim u_0$. From now on all the operations with the subscript of a vertex $u_i$ of $C_r$ are expressed modulo $r$. Let $(u_i,v_j),(u_l,v_k)$ be two distinct vertices of $C_r\times K_t$.

(i) Let $r=4$ or $5$. We differentiate four cases.

\noindent{Case} 1: $u_i=u_l$. Hence, $d_{C_r\times K_t}((u_i,v_j),(u_l,v_k))=2$. Since $(u_i,v_j)\sim (u_{i-1},v_k)$, if $k \ne j$ and $d_{C_r\times K_t}((u_{i-1},v_k),(u_l,v_k))=3$, then it follows that  $(u_i,v_j)$ and $(u_l,v_k)$ are not MMD in $C_r\times K_t$.

\noindent{Case} 2: $v_j=v_k$. If $l=i+1$ or $i=l+1$, then without loss of generality we suppose $l=i+1$ and we have that $d_{C_r\times K_t}((u_i,v_j),(u_l,v_k))=3=D(C_r\times K_t)$. Thus, $(u_i,v_j)$ and $(u_l,v_k)$ are MMD in $C_r\times K_t$. On the other hand, if $l\ne i+1$ and $i\ne l+1$, then $d_{C_r\times K_t}((u_i,v_j),(u_l,v_k))=2$. Since for every vertex $(u,v)\in N_{C_r\times K_t}(u_i,v_j)$ we have that $d_{C_r\times K_t}((u,v),(u_l,v_k))\le 2$ and also for every vertex $(u,v)\in N_{C_r\times K_t}(u_l,v_k)$ we have that $d_{C_r\times K_t}((u,v),(u_i,v_j))\le 2$, we obtain that $(u_i,v_j)$ and $(u_l,v_k)$ are MMD in $C_r\times K_t$.

\noindent{Case} 3: $u_i\ne u_l$, $v_j\ne v_k$ and $(u_i,v_j)\sim (u_l,v_k)$. So, there exists a vertex $(u,v)\in N_{C_r\times K_t}(u_l,v_k)$ such that $d_{C_r\times K_t}((u,v),(u_i,v_j))=2$ and, as a consequence, $(u_i,v_j)$ and $(u_l,v_k)$ are not MMD in $C_r\times K_t$.

\noindent{Case} 4: $u_i\ne u_l$, $v_j\ne v_k$ and $(u_i,v_j)\not\sim (u_l,v_k)$. Hence, we have that $d_{C_r\times K_t}((u_i,v_j),(u_l,v_k))=2$. We can suppose, without loss of generality, that $l=i+2$. Since

\begin{itemize}
\item $(u_i,v_j)\sim (u_{l-1},v_k)$ and $(u_l,v_k)\sim (u_{l-1},v_j)$ and also,
\item $d_{C_r\times K_t}((u_i,v_j),(u_{l-1},v_j))=3$ and $d_{C_r\times K_t}((u_l,v_k),(u_{l-1},v_k))=3$,
\end{itemize}
we obtain that $(u_i,v_j)$ and $(u_l,v_k)$ are not MMD in $C_r\times K_t$. Hence the strong resolving graph $(C_r\times K_t)_{SR}$ is isomorphic to $\bigcup_{i=1}^t K_r$.

(ii) Let $r\ge 6$. The result is a direct consequence of Theorem \ref{direct-complete}.
\end{proof}

The first item of next result was implicitly deduced in \cite[Proof of Theorem 38]{RodriguezVelazquez2014a}.


\begin{proposition}\label{p_r-times-k_t}
Let $r\ge 2$ and $t\ge 3$ be positive integers. Then the following assertions hold.
\begin{enumerate}[{\rm (i)}]
\item If   $r\in \{2,3\}$, then $(P_r\times K_t)_{SR}\cong \bigcup_{i=1}^t K_r$.

\item If $r\ge 4$, $P_r=v_1v_2\ldots v_r$ and $P_2=v_1v_r$, then $(P_r\times K_t)_{SR}\cong (P_r\Box N_t)\sqcup (P_2\circ N_t)$.
\end{enumerate}
\end{proposition}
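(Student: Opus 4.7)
The plan is to compute distances in $P_r\times K_t$ explicitly and then identify mutually maximally distant pairs by case analysis. Write $V(P_r)=\{v_1,\dots,v_r\}$ with $v_iv_{i+1}\in E(P_r)$ and $V(K_t)=\{u_1,\dots,u_t\}$. Since $t\ge 3$, the graph $K_t$ is non-bipartite and admits a closed walk of every length $\ge 2$ at every vertex and a walk of every length $\ge 1$ between any two distinct vertices; in $P_r$ with $r\ge 2$, a walk of length $n$ from $v_i$ to $v_l$ exists if and only if $n\ge |i-l|$ and $n\equiv |i-l|\pmod 2$ (backtracking always being available). Substituting these into Remark~\ref{dir-distance} yields the distance formula
\begin{equation*}
d_{P_r\times K_t}((v_i,u_j),(v_l,u_k))=\begin{cases} |i-l| & i\ne l,\ (j\ne k\ \text{or}\ |i-l|\ge 2),\\ 3 & j=k,\ |i-l|=1,\\ 2 & i=l,\ j\ne k.\end{cases}
\end{equation*}

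Next I will show that the MMD pairs are exactly those in one of the following three families: \textbf{(A)} $j=k$ and $|i-l|=1$; \textbf{(B)} $j=k$ and $\{i,l\}=\{1,r\}$; \textbf{(C)} $j\ne k$, $\{i,l\}=\{1,r\}$ and $r\ge 4$. For \textbf{(A)}, the pair distance is $3$ and every neighbour $(v_{i\pm 1},u_{j'})$ with $j'\ne j$ sits at distance $\le 2$ from the other vertex, proving MMD. For \textbf{(B)}, each endpoint has a single $P_r$-neighbour ($v_2$ or $v_{r-1}$), and the neighbours $(v_2,u_{j'})$ with $j'\ne j$ reach $(v_r,u_j)$ at distance $r-2$ when $r\ge 4$ or $1$ when $r=3$, never exceeding the pair distance. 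For \textbf{(C)} with $r\ge 4$, the pair attains the diameter $r-1$ and is therefore MMD automatically. Conversely, every other combination is excluded by exhibiting a witness neighbour at strictly greater distance: if $i=l$ and $j\ne k$, the neighbour $(v_{i\pm 1},u_k)$ has distance $3>2$; if $|i-l|=1$ and $j\ne k$, any column $j'\notin\{j,k\}$ (available since $t\ge 3$) produces a neighbour $(v_l,u_{j'})$ at distance $2>1$; if $|i-l|\ge 2$ with $\{i,l\}\ne\{1,r\}$, the interior $P_r$-endpoint admits a neighbour moving away from the other vertex at pair distance $|i-l|+1$; and if $\{i,l\}=\{1,r\}$, $j\ne k$, $r=3$, then the neighbour $(v_2,u_k)$ of $(v_1,u_j)$ reaches $(v_3,u_k)$ at distance $3>2$.

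With the MMD description settled, the rest is bookkeeping. Every vertex appears in a family-\textbf{(A)} pair, so $\partial(P_r\times K_t)=V(P_r\times K_t)$. For part \textbf{(ii)} with $r\ge 4$, family \textbf{(A)} furnishes $t$ vertex-disjoint copies of $P_r$, i.e.\ $P_r\Box N_t$, while families \textbf{(B)} and \textbf{(C)} together yield the complete bipartite graph $K_{t,t}$ between $\{v_1\}\times V(K_t)$ and $\{v_r\}\times V(K_t)$, which is $P_2\circ N_t$ with $P_2=v_1v_r$; hence $(P_r\times K_t)_{SR}\cong(P_r\Box N_t)\sqcup(P_2\circ N_t)$. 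For part \textbf{(i)}, when $r=2$ families \textbf{(A)} and \textbf{(B)} coincide and give the pairs $(v_1,u_j)(v_2,u_j)$, yielding $\bigcup_{i=1}^tK_2$; when $r=3$, families \textbf{(A)} and \textbf{(B)} together form a triangle on each column $\{(v_1,u_j),(v_2,u_j),(v_3,u_j)\}$, yielding $\bigcup_{i=1}^tK_3$.

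The main obstacle is the exhaustive case analysis in the second paragraph: the distance formula is piecewise, forcing a split by parity of $|i-l|$ and by whether the columns agree, and for each non-MMD combination one must construct an explicit witness neighbour. The subtle point that distinguishes $r=3$ from $r\ge 4$ is that moving one step in the $P_r$-coordinate can actually \emph{increase} the pairwise distance, because of the anomalous value $3$ attained by same-column adjacent-row pairs.
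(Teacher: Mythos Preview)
Your proof is correct. The distance formula you derive from Remark~\ref{dir-distance} is accurate, and your case analysis of MMD pairs is complete; in particular, the witness neighbours you produce for each excluded case work as claimed, and the subtlety you flag for $r=3$ (where a one-step move in the path coordinate can raise the distance from $2$ to the anomalous value $3$) is exactly what forces family \textbf{(C)} to require $r\ge 4$.

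The route, however, differs from the paper's. For $r\in\{2,3\}$ the paper simply asserts which pairs are MMD, so your argument there is essentially a fleshed-out version of the same idea. For $r\ge 4$ the paper does \emph{not} compute anything directly: it observes that $P_r$ is a triangle-free 2MMF graph (its unique MMD pair $v_1,v_r$ lies at distance $r-1\ge 3$) and invokes the general structural Theorem~\ref{direct-complete} with $W=\emptyset$, which immediately yields $(P_r\times K_t)_{SR}\cong (P_r\Box N_t)\sqcup ((P_r)_{SR}\circ N_t)$ and hence the stated form since $(P_r)_{SR}\cong K_2$ on $\{v_1,v_r\}$. Your approach trades that single citation for an explicit, self-contained distance computation and MMD classification; this makes the proof independent of Theorem~\ref{direct-complete} and arguably more transparent for this specific family, at the cost of a longer case analysis. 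The paper's approach, conversely, highlights that the result for paths is just an instance of a much broader phenomenon.
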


\begin{proof}
Let $V(K_t)=\{v_1,v_2,\dots ,v_{t}\}$ and $V(P_r)=\{u_0,u_1,\dots ,u_{r-1}\}$, where $u_i\sim u_{i+1}$ for every $i\in \{0,\dots , r-1\}$.

If $r=2$, then a vertex $(u_i,v_j)$ in $P_2\times K_t$ is MMD only with the vertex $(u_l, v_j)$, where $i\ne l$. So, $(P_2\times K_t)_{SR}\cong \bigcup_{m=1}^{t} K_2$.

If $r=3$, then a vertex $(u_i,v_j)$ in $P_3\times K_t$ is MMD only with those vertices $(u_l, v_j)$, where $i\ne l$. Thus, $(P_3\times K_t)_{SR}\cong \bigcup_{m=1}^{t} K_3$.

If $r\ge 4$, then the result is a particular case of Theorem \ref{direct-complete}.
\end{proof}

We next deal with the direct product of a complete bipartite graph and a complete graph. In contrast with Theorem \ref{direct-complete}, in this case all the MMD vertices of the complete bipartite graph are at distance two.

\begin{remark}{\rm \cite{Kuziak2016a}}\label{lem-K_n-K_r-t}
For any $r,t\ge 1$ and any $n\ge 3$, $$(K_{r,t}\times K_n)_{SR}\cong \bigcup_{i=1}^n K_{r+t}.$$
\end{remark}

\begin{proof}
Let $X,Y$ be the bipartition sets of $K_{r,t}$ such that $|X|=r$ and $|Y|=t$. Consider the vertices $g\in X$ and $h\in V(K_n)$. We notice that vertices in $A=Y\times (V(K_n)-\{h\})$ form the open neighborhood of $(g,h)$. Since $n\ge 3$, every vertex from $(X\times V(K_n))-\{(g,h)\})$ has a neighbor in $A$ and viceversa. Thus, vertices of $A$ are not MMD with $(g,h)$. On the other hand, the remaining vertices are $Y\times \{h\}$ and they are adjacent to all vertices in $X\times (V(K_n)-\{h\})$. Clearly, any vertex in $Y\times \{h\}$ is MMD with $(g,h)$. Moreover, the vertices in $X\times (V(K_n)-\{h\})$ are not MMD with $(g,h)$. Finally, we notice that the vertices in $(X-\{g\})\times \{h\}$ are not adjacent to any vertex in $Y\times \{h\}$. So, every vertex in $(X-\{g\})\times \{h\}$ is MMD with $(g,h)$. As a consequence, $(g,h)$ is adjacent in $(K_{r,t}\times K_n)_{SR}$ to every vertex of $(V(K_{r,t})-\{g\})\times \{h\}$. Therefore, by symmetry, the proof is completed.
\end{proof}

Now we present some results for graphs of diameter two as factors of a direct product. Since it is necessary to be careful with connectedness of the direct product, the results are separated with respect to whether one factor is bipartite or not. It is not hard to see that the only bipartite graphs of diameter two are the complete bipartite graphs $K_{k,\ell}$, where $\max \{k,\ell\}\geq 2$.

Another important measure for the strong resolving graphs of a direct product of two graphs of diameter two is when the factors are triangle free and moreover, when every pair of vertices is on a five-cycle. Hence, we call a graph in which every pair of vertices is on a common five-cycle, a $C_5$-\emph{connected} graph. Clearly, a $C_5$-connected graph has diameter at most two. Moreover, if $G$ is a triangle free $C_5$-connected graph, then its diameter equals two. The Petersen graph is $C_5$-connected triangle free graph. The graph $G$ of Figure \ref{fig:C6} is an example of a triangle free graph of diameter two in which $u$ and $v$ are not on a common five-cycle and $G$ is not $C_5$-connected. The graph $H$ of the same figure is a
triangle free $C_5$-connected graph of diameter two.

\begin{figure}[ht!]
\begin{center}
\begin{tikzpicture}[scale=0.6,style=thick]
\def\vr{6pt}
\path (5,-3) coordinate (a); \path (8,0) coordinate (b);
\path (5,3) coordinate (c); \path (2,0) coordinate (d);
\path (4,0) coordinate (e); \path (5,1) coordinate (f);
\path (6,0) coordinate (g); \path (5,-1) coordinate (h);

\path (-3,-2) coordinate (v); \path (-4,0.5) coordinate (w);
\path (-1.5,0.5) coordinate (x); \path (-4,3) coordinate (y);
\path (-5,-2) coordinate (u); \path (-6.5,0.5) coordinate (z);

\draw (a) -- (b); \draw (g) -- (h);
\draw (c) -- (b); \draw (c) -- (f);
\draw (b) -- (g); \draw (a) -- (h);
\draw (a) -- (d); \draw (c) -- (d);
\draw (d) -- (e); \draw (f) -- (e);
\draw (g) -- (e); \draw (f) -- (h);

\draw (z) -- (u);
\draw (u) -- (v); \draw (u) -- (w);
\draw (v) -- (x); \draw (x) -- (y);
\draw (y) -- (z); \draw (y) -- (w);
\draw (a)  [fill=white] circle (\vr); \draw (b)  [fill=white] circle (\vr);
\draw (c)  [fill=white] circle (\vr); \draw (d)  [fill=white] circle (\vr);
\draw (e)  [fill=white] circle (\vr); \draw (f)  [fill=white] circle (\vr);
\draw (g)  [fill=white] circle (\vr); \draw (h)  [fill=white] circle (\vr);
\draw (u)  [fill=white] circle (\vr); \draw (v)  [fill=white] circle (\vr);
\draw (w)  [fill=white] circle (\vr); \draw (x)  [fill=white] circle (\vr);
\draw (y)  [fill=white] circle (\vr); \draw (z)  [fill=white] circle (\vr);
\draw (2,3) node {$H$}; \draw (-6,3) node {$G$};
\draw (-7,0.5) node {$u$}; \draw (-3.5,0.5) node {$v$};
\end{tikzpicture}
\end{center}
\caption{Two triangle free graphs of diameter two.} \label{fig:C6}
\end{figure}
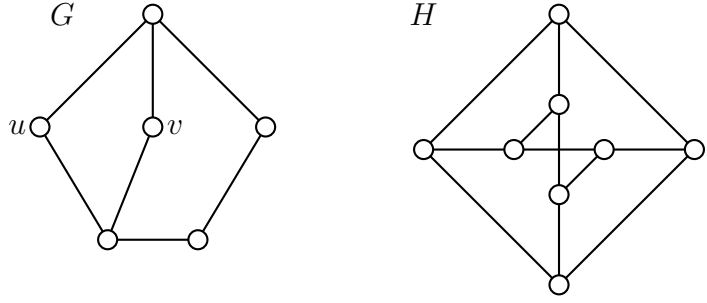

\begin{theorem}{\rm \cite{Kuziak2016a}}\label{complbip}
Let $G$ be a nonbipartite triangle free graph of order $n\ge 2$ and let $\max\{k,\ell\}\geq 2$. If $G$ is $C_5$-connected, then
$$(G\times K_{k,\ell})_{SR}\cong N_n\Box K_{k+\ell}.$$
\end{theorem}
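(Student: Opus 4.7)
The plan is to compute every pairwise distance in $G\times K_{k,\ell}$ explicitly and then read off the MMD pairs. First, I would fix a bipartition $V(K_{k,\ell})=X\cup Y$ with $|X|=k$, $|Y|=\ell$, and note that $G\times K_{k,\ell}$ is connected by Theorem~\ref{dir-connected}, since $G$ is nonbipartite. Next, using the triangle-freeness and $C_5$-connectedness of $G$, I would verify the walk-parity distance formulas
$$d_G^e(a,c)=\begin{cases}0,& a=c,\\ 4,& a\sim c,\\ 2,& a\ne c,\ a\not\sim c,\end{cases}\qquad d_G^o(a,c)=\begin{cases}5,& a=c,\\ 1,& a\sim c,\\ 3,& a\ne c,\ a\not\sim c.\end{cases}$$
The lower bounds come from triangle-freeness (no triangle forces $d_G^e\ge 4$ for adjacent vertices and odd girth $\ge 5$ forces $d_G^o(a,a)\ge 5$), and the upper bounds come from walking along a $5$-cycle through the given pair. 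For the second factor, bipartiteness of $K_{k,\ell}$ gives $d_{K_{k,\ell}}^e=2$ and $d_{K_{k,\ell}}^o=\infty$ between two distinct vertices in the same part, and $d_{K_{k,\ell}}^e=\infty$, $d_{K_{k,\ell}}^o=1$ between two vertices in different parts.

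Substituting these values into Remark~\ref{dir-distance}, I obtain that $d_{G\times K_{k,\ell}}((a,b),(c,d))$ depends only on the type of $(a,c)$ (equal, adjacent, or nonadjacent-distinct) and the type of $(b,d)$ (equal, same part, different parts). A short case analysis yields that the value is always in $\{1,2,3,4,5\}$, that the diameter equals $5$, and that it is attained exactly when $a=c$ and $b,d$ lie in different parts of $K_{k,\ell}$.

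With all distances available, I would first check that every pair $\{(a,b),(a,b')\}$ with $b\ne b'$ is MMD. When $b,b'$ lie in different parts, the distance equals the diameter $5$, so the pair is trivially MMD. When $b,b'$ lie in the same part, any neighbor of $(a,b)$ has the form $(a',b'')$ with $a'\in N_G(a)$ (hence $a\sim a'$) and $b''$ in the opposite part of $K_{k,\ell}$; by the distance table, such a neighbor is at distance $1$ from $(a,b')$, which does not exceed $d((a,b),(a,b'))=2$. By symmetry both vertices are maximally distant from each other, so the pair is MMD.

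The main obstacle is the converse: showing that no pair $\{(a,b),(c,d)\}$ with $a\ne c$ is MMD. I would split into the four subcases $(a\sim c$ vs.\ $a\not\sim c)\times (b,d$ in the same part vs.\ in different parts$)$ and, in each, exhibit a neighbor of $(a,b)$ strictly farther from $(c,d)$ than $(a,b)$ is. When $a\sim c$, the choice $(c,b'')$ with $b''\in N_{K_{k,\ell}}(b)$ (or, in the subcase where $b,d$ lie in different parts, a neighbor $(a',d)$ with $a'\in N_G(a)\setminus\{c\}$, available because each vertex has degree $\ge 2$ in a $C_5$-connected graph) lands us in a strictly larger distance class of the table. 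When $a\not\sim c$, one needs a suitable neighbor $a'\in N_G(a)$ with prescribed adjacency to $c$; here the structural input of $C_5$-connectedness together with triangle-freeness is essential. Namely, on any $5$-cycle $a=v_0v_1v_2v_3v_4v_0$ through $a$ and $c=v_2$, the vertex $v_1\in N_G(a)$ is adjacent to $c$, while the vertex $v_4\in N_G(a)$ is not (any edge $v_4c=v_4v_2$ would, together with the cycle edges $v_2v_3$ and $v_3v_4$, produce a triangle). Selecting the correct neighbor in each subcase produces a neighbor of $(a,b)$ whose distance to $(c,d)$ is strictly larger, establishing non-MMD. Combining the two directions, the edge set of $(G\times K_{k,\ell})_{SR}$ is precisely $\{(a,b)(a,b')\,:\,a\in V(G),\ b\ne b'\}$, so $(G\times K_{k,\ell})_{SR}$ is the disjoint union of $n$ cliques of size $k+\ell$, i.e.\ $N_n\Box K_{k+\ell}$.
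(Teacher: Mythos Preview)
Your proof is correct and follows essentially the same strategy as the paper's: compute all parity distances in $G$ and $K_{k,\ell}$, plug them into Remark~\ref{dir-distance} to get the full distance table for $G\times K_{k,\ell}$, identify the MMD pairs with equal first coordinate, and rule out all pairs with $a\ne c$ by exhibiting a farther neighbor chosen on a common $5$-cycle through $a$ and $c$. The only stylistic difference is that for the case $a=c$ with $b,b'$ in the same part, the paper observes directly that $(a,b)$ and $(a,b')$ have identical open neighborhoods (since $N_{K_{k,\ell}}(b)=N_{K_{k,\ell}}(b')$) and hence are MMD, whereas you verify the maximally-distant condition by checking distances; both arguments are immediate.
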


\begin{proof}
Let $V(G)=\{g_1,\ldots ,g_n\}$ and $U(K_{k,\ell})=U_1\cup U_2$ where $U_1=\{u_1,\ldots ,u_k\}$ and $U_2=\{v_1,\ldots ,v_{\ell}\}$.
Clearly, $d^e_{K_{k,\ell}}(u_i,v_j)=\infty$, $d^o_{K_{k,\ell}}(u_i,v_j)=1$, $d^o_{K_{k,\ell}}(v_i,v_j)=\infty$ and
$d^o_{K_{k,\ell}}(u_i,u_j)=\infty$ for any $i$ and $j$. Also, $d^e_{K_{k,\ell}}(u_i,u_j)=2$ and $d^e_{K_{k,\ell}}(v_i,v_j)=2$
for every $i\neq j$. Conversely, by $C_5$-connectedness of $G$, $d^e_G(g_i,g_j)$ and $d^o_G(g_i,g_j)$ always exist.
Moreover, $d^e_G(g_i,g_j)$ is between 0 and 4, while $d^o_G(g_i,g_j)$ is between 1 and 5. Hence, by the distance formula presented in
Remark \ref{dir-distance} we can have the distances between 0 and 5 in $G\times K_{k,\ell}$.
Again, by this distance formula, it is easy to see that $d_{G\times K_{k,\ell}}((g_1,u_1),(g_1,v_j))=5$ for any $j\in \{1,\ldots ,\ell\}$
and that $d_{G\times K_{k,\ell}}((g_1,u_1),(g_1,u_j))=2$ for any $j\in \{2,\ldots ,k\}$. We show that vertices satisfying these equalities above are the only neighbors of $(g_1,u_1)$ in the strong resolving graph $(G\times K_{k,\ell})_{SR}$. Clearly, $(g_1,u_1)$ and $(g_1,v_j)$ are MMD, since they are
diametral vertices for any $j\in \{1,\ldots ,\ell\}$. Since $N_{K_{k,\ell}}(u_1)=N_{K_{k,\ell}}(u_j)$, for any $j\in \{2,\ldots ,k\}$,
by equation \eqref{neigh} that describes  neighborhoods in the direct product, we see that $(g_1,u_1)$ and $(g_1,u_j)$ have the same neighborhood and therefore, they are MMD.

Next we show that no other vertex of $G\times K_{k,\ell}$ is MMD with $(g_1,u_1)$. In this case, we reduce it
to a five-cycle, since $G$ is $C_5$-connected. We may assume that $g_1g_2g_3g_4g_5g_1$ is a five-cycle. By the symmetry of a five-cycle
we need to present the arguments only for $g_2$ and $g_3$. For every $j\in \{1,\ldots ,\ell\}$ and $i\in \{2,\ldots ,\ell\}$ they are as follows:
\begin{itemize}
\item $(g_2,v_j)\sim (g_3,u_1)$ and $(g_2,v_j)$ is closer to $(g_1,u_1)$ than $(g_3,u_1)$;
\item $(g_2,u_i)\sim (g_1,v_1)$ and $(g_2,u_i)$ is closer to $(g_1,u_1)$ than $(g_1,v_1)$;
\item $(g_3,v_j)\sim (g_2,u_1)$ and $(g_3,v_j)$ is closer to $(g_1,u_1)$ than $(g_2,u_1)$;
\item $(g_3,u_i)\sim (g_4,v_1)$ and $(g_3,u_i)$ is closer to $(g_1,u_1)$ than $(g_4,v_1)$.
\end{itemize}
See the graph $C_5\times K_{1,2}\cong C_5\times P_3$ on the left part of Figure \ref{second}, where the distances from $(g_1,u_1)$ are marked.
Thus, the vertex $(g_1,u_1)$ is adjacent to all vertices of $\{g_1\}\times (V(K_{k,\ell})-\{u_1\})$ in $(G\times K_{k,\ell})_{SR}$.
Notice that the same argument also holds when $\min\{k,\ell\}=1$.
We can use the same arguments for any vertex of $G\times K_{k,\ell}$ and therefore, we have $(G\times K_{k,\ell})_{SR}\cong N_n\Box K_{k+\ell}$.
\end{proof}


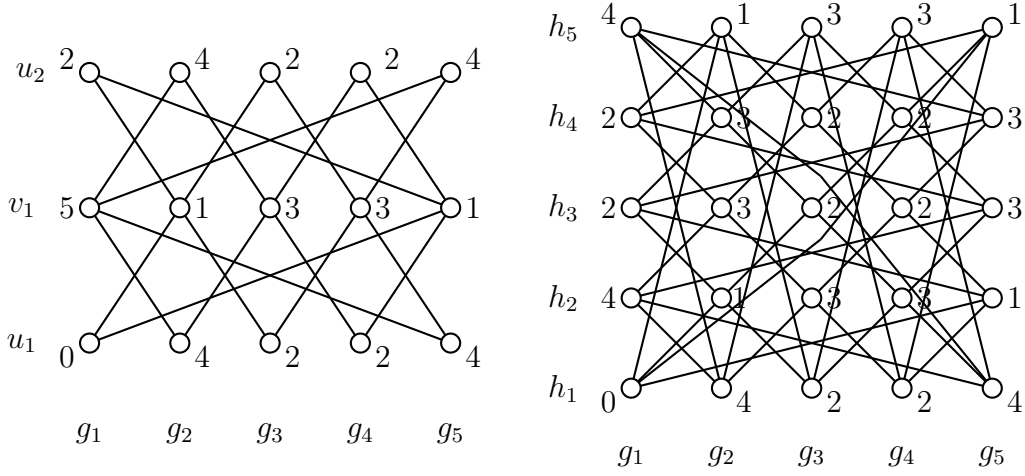
\begin{figure}[ht!]
\begin{center}
\begin{tikzpicture}[scale=0.6,style=thick]
\def\vr{6pt}
\path (2,-4) coordinate (a1); \path (4,-4) coordinate (a2);
\path (6,-4) coordinate (a3); \path (8,-4) coordinate (a4);
\path (10,-4) coordinate (a5); \path (2,-2) coordinate (b1);
\path (4,-2) coordinate (b2); \path (6,-2) coordinate (b3);
\path (8,-2) coordinate (b4); \path (10,-2) coordinate (b5);
\path (2,0) coordinate (c1); \path (4,0) coordinate (c2);
\path (6,0) coordinate (c3); \path (8,0) coordinate (c4);
\path (10,0) coordinate (c5); \path (2,2) coordinate (d1);
\path (4,2) coordinate (d2); \path (6,2) coordinate (d3);
\path (8,2) coordinate (d4); \path (10,2) coordinate (d5);
\path (2,4) coordinate (e1); \path (4,4) coordinate (e2);
\path (6,4) coordinate (e3); \path (8,4) coordinate (e4);
\path (10,4) coordinate (e5);
\path (6.2,-0.7) coordinate (x); \path (6.2,0.7) coordinate (y);

\path (-10,-3) coordinate (u1); \path (-8,-3) coordinate (u2);
\path (-6,-3) coordinate (u3); \path (-4,-3) coordinate (u4);
\path (-2,-3) coordinate (u5); \path (-10,0) coordinate (v1);
\path (-8,0) coordinate (v2); \path (-6,0) coordinate (v3);
\path (-4,0) coordinate (v4); \path (-2,0) coordinate (v5);
\path (-10,3) coordinate (w1); \path (-8,3) coordinate (w2);
\path (-6,3) coordinate (w3); \path (-4,3) coordinate (w4);
\path (-2,3) coordinate (w5);

\draw (a1) -- (b2); \draw (a1) -- (e2);
\draw (a1) -- (b5); 
\draw (a2) -- (b1); \draw (a2) -- (e1);
\draw (a2) -- (b3); \draw (a2) -- (e3);
\draw (a3) -- (b2); \draw (a3) -- (e2);
\draw (a3) -- (b4); \draw (a3) -- (e4);
\draw (a4) -- (b3); \draw (a4) -- (e3);
\draw (a4) -- (b5); \draw (a4) -- (e5);
\draw (a5) -- (b1); 
\draw (a5) -- (b4); \draw (a5) -- (e4);
\draw (b1) -- (c2); \draw (b1) -- (c5);
\draw (b2) -- (c1); \draw (b2) -- (c3);
\draw (b3) -- (c2); \draw (b3) -- (c4);
\draw (b4) -- (c3); \draw (b4) -- (c5);
\draw (b5) -- (c1); \draw (b5) -- (c4);
\draw (c1) -- (d2); \draw (c1) -- (d5);
\draw (c2) -- (d1); \draw (c2) -- (d3);
\draw (c3) -- (d2); \draw (c3) -- (d4);
\draw (c4) -- (d3); \draw (c4) -- (d5);
\draw (c5) -- (d1); \draw (c5) -- (d4);
\draw (d1) -- (e2); \draw (d1) -- (e5);
\draw (d2) -- (e1); \draw (d2) -- (e3);
\draw (d3) -- (e2); \draw (d3) -- (e4);
\draw (d4) -- (e3); \draw (d4) -- (e5);
\draw (d5) -- (e1); \draw (d5) -- (e4);
\draw (e5) -- (x); \draw (x) -- (a1);
\draw (y) -- (e1); \draw (a5) -- (y);

\draw (u1) -- (v2); \draw (u1) -- (v5);
\draw (u2) -- (v1); \draw (u2) -- (v3);
\draw (u3) -- (v2); \draw (u3) -- (v4);
\draw (u4) -- (v3); \draw (u4) -- (v5);
\draw (u5) -- (v4); \draw (u5) -- (v1);
\draw (v1) -- (w2); \draw (v1) -- (w5);
\draw (v2) -- (w3); \draw (v2) -- (w1);
\draw (v3) -- (w2); \draw (v3) -- (w4);
\draw (v4) -- (w3); \draw (v4) -- (w5);
\draw (v5) -- (w1); \draw (v5) -- (w4);
\draw (a1)  [fill=white] circle (\vr); \draw (a2)  [fill=white] circle (\vr);
\draw (a3)  [fill=white] circle (\vr); \draw (a4)  [fill=white] circle (\vr);
\draw (a5)  [fill=white] circle (\vr); \draw (b1)  [fill=white] circle (\vr);
\draw (b2)  [fill=white] circle (\vr); \draw (b3)  [fill=white] circle (\vr);
\draw (b4)  [fill=white] circle (\vr); \draw (b5)  [fill=white] circle (\vr);
\draw (c1)  [fill=white] circle (\vr); \draw (c2)  [fill=white] circle (\vr);
\draw (c3)  [fill=white] circle (\vr); \draw (c4)  [fill=white] circle (\vr);
\draw (c5)  [fill=white] circle (\vr); \draw (d1)  [fill=white] circle (\vr);
\draw (d2)  [fill=white] circle (\vr); \draw (d3)  [fill=white] circle (\vr);
\draw (d4)  [fill=white] circle (\vr); \draw (d5)  [fill=white] circle (\vr);
\draw (e1)  [fill=white] circle (\vr); \draw (e2)  [fill=white] circle (\vr);
\draw (e3)  [fill=white] circle (\vr); \draw (e4)  [fill=white] circle (\vr);
\draw (e5)  [fill=white] circle (\vr);

\draw (u1)  [fill=white] circle (\vr);
\draw (u2)  [fill=white] circle (\vr); \draw (u3)  [fill=white] circle (\vr);
\draw (u4)  [fill=white] circle (\vr); \draw (u5)  [fill=white] circle (\vr);
\draw (v1)  [fill=white] circle (\vr); \draw (v2)  [fill=white] circle (\vr);
\draw (v3)  [fill=white] circle (\vr); \draw (v4)  [fill=white] circle (\vr);
\draw (v5)  [fill=white] circle (\vr); \draw (w1)  [fill=white] circle (\vr);
\draw (w2)  [fill=white] circle (\vr); \draw (w3)  [fill=white] circle (\vr);
\draw (w4)  [fill=white] circle (\vr); \draw (w5)  [fill=white] circle (\vr);

\draw (-10.5,-3.3) node {$0$}; \draw (-7.5,-3.3) node {$4$};
\draw (-5.5,-3.3) node {$2$}; \draw (-3.5,-3.3) node {$2$};
\draw (-1.5,-3.3) node {$4$}; \draw (-10.5,0) node {$5$};
\draw (-7.5,0) node {$1$}; \draw (-5.5,0) node {$3$};
\draw (-3.5,0) node {$3$}; \draw (-1.5,0) node {$1$};
\draw (-10.5,3.3) node {$2$}; \draw (-7.5,3.3) node {$4$};
\draw (-5.5,3.3) node {$2$}; \draw (-3.3,3.3) node {$2$};
\draw (-1.5,3.3) node {$4$}; \draw (-10,-5) node {$g_1$};
\draw (-8,-5) node {$g_2$}; \draw (-6,-5) node {$g_3$};
\draw (-4,-5) node {$g_4$}; \draw (-2,-5) node {$g_5$};
\draw (-11.5,-3) node {$u_1$}; \draw (-11.3,3) node {$u_2$};
\draw (-11.5,0) node {$v_1$}; 

\draw (1.5,-4.3) node {$0$}; \draw (4.5,-4.3) node {$4$};
\draw (6.5,-4.3) node {$2$}; \draw (8.5,-4.3) node {$2$};
\draw (10.5,-4.3) node {$4$}; \draw (1.5,-2) node {$4$};
\draw (4.4,-2) node {$1$}; \draw (6.5,-2) node {$3$};
\draw (8.5,-2) node {$3$}; \draw (10.5,-2) node {$1$};
\draw (1.5,0) node {$2$}; \draw (4.5,0) node {$3$};
\draw (6.5,0) node {$2$}; \draw (8.5,0) node {$2$};
\draw (10.5,0) node {$3$}; \draw (2,-5.5) node {$g_1$};
\draw (4,-5.5) node {$g_2$}; \draw (6,-5.5) node {$g_3$};
\draw (8,-5.5) node {$g_4$}; \draw (10,-5.5) node {$g_5$};
\draw (0.5,-4) node {$h_1$}; \draw (0.5,-2) node {$h_2$};
\draw (0.5,0) node {$h_3$}; \draw (0.5,2) node {$h_4$};
\draw (0.5,4) node {$h_5$}; \draw (1.5,2) node {$2$};
\draw (4.5,2) node {$3$}; \draw (6.5,2) node {$2$};
\draw (8.5,2) node {$2$}; \draw (10.5,2) node {$3$};
\draw (1.5,4.3) node {$4$}; \draw (4.5,4.3) node {$1$};
\draw (6.5,4.3) node {$3$}; \draw (8.5,4.3) node {$3$};
\draw (10.5,4.3) node {$1$};
\end{tikzpicture}
\end{center}
\caption{Situations from the proofs of Theorems \ref{complbip} and \ref{C5connected}.} \label{second}
\end{figure}

\begin{theorem}{\rm \cite{Kuziak2016a}}\label{C5connected}
For any nonbipartite triangle free $C_5$-connected graphs $G$ and $H$ of diameter two, $$(G\times H)_{SR}\cong G\Box H.$$
\end{theorem}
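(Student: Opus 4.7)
The plan is to use the distance formula in Remark~\ref{dir-distance} together with the structural information provided by $C_5$-connectedness and triangle-freeness to tabulate all relevant distances in $G\times H$ and then identify the MMD pairs. First I would observe that in any triangle-free $C_5$-connected graph of diameter two, the pair $(d^e(a,c),d^o(a,c))$ is determined by $d(a,c)$: it equals $(0,5)$ when $a=c$ (the 5-cycle through $a$ yields a closed walk of odd length $5$, while triangle-freeness excludes closed walks of odd length $1$ or $3$), $(4,1)$ when $a\sim c$ (adjacent vertices have no common neighbor by triangle-freeness, so $d^e\geq 4$, while the 5-cycle provides an even walk of length $4$), and $(2,3)$ when $d(a,c)=2$ (a common neighbor realizes $d^e=2$, and going the long way around the 5-cycle realizes $d^o=3$).

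Plugging these six values into Remark~\ref{dir-distance} yields a complete distance table for $G\times H$. A direct inspection shows that the diameter of $G\times H$ equals $4$, attained exactly by the pairs that differ in one coordinate by an edge of the corresponding factor, i.e., the edges of $G\Box H$. Pairs agreeing in one coordinate while the other pair is at factor-distance two sit at distance $2$; pairs with one coordinate adjacent and the other at factor-distance two sit at distance $3$; pairs adjacent in both coordinates sit at distance $1$; and pairs at factor-distance two in both coordinates sit at distance $2$. Since diametral pairs are automatically MMD, every edge of $G\Box H$ is already an edge of $(G\times H)_{SR}$, and in particular $\partial(G\times H)=V(G\times H)$, so the two graphs have the same vertex set.

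The heart of the argument is to rule out any further MMD pair. The key auxiliary lemma I would establish is: if $H$ is triangle-free, $C_5$-connected and $d_H(h,h')=2$, then $N_H(h)$ contains some $h''$ with $h''\neq h'$ and $h''\not\sim h'$. Indeed, take a 5-cycle $h\, a\, h'\, c\, d\, h$ through $h$ and $h'$; then $d\in N_H(h)$, $d\neq h'$, and $d\sim h'$ would force the triangle $c,d,h'$. The dual fact, that $h\sim h'$ implies no common neighbor of $h$ and $h'$, is immediate from triangle-freeness, and every vertex of $H$ has degree at least two by $C_5$-connectedness.

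With these tools in hand I would handle the six non-edge configurations of $G\Box H$ one by one, in each case exhibiting a neighbor $(g'',h'')$ of one endpoint whose distance to the other endpoint strictly exceeds the original pair distance, thereby breaking maximal distance. For a distance-$2$ pair $(g,h),(g,h')$ with $d_H(h,h')=2$, one takes any $g''\in N_G(g)$ and the $h''$ supplied by the key lemma, obtaining distance $3$; the symmetric case is analogous. For a distance-$3$ pair $(g,h),(g',h')$ with $g\sim g'$ and $d_H(h,h')=2$, one takes a common neighbor $h''$ of $h$ and $h'$, and the neighbor $(g',h'')$ of $(g,h)$ reaches $(g',h')$ at distance $4$. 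For a distance-$1$ pair (both coordinates adjacent), the neighbor $(g',h'')$ with $h''\in N_H(h)\setminus\{h'\}$ lies at distance $2$ from $(g',h')$ by the dual fact. The ``doubly factor-distance-two'' pair is handled by combining a common neighbor on the $G$ side with the key lemma on the $H$ side. The main obstacle is bookkeeping rather than conceptual depth: one has to verify, for each of the six configurations, that the required witness exists, which is always either a common neighbor (guaranteed by factor-distance two) or a ``non-neighbor of the second coordinate inside the first's neighborhood'' (guaranteed by the key lemma). Assembling the cases shows that the MMD pairs of $G\times H$ are exactly the edges of $G\Box H$, hence $(G\times H)_{SR}\cong G\Box H$.
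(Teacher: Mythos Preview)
Your proof is correct and follows essentially the same route as the paper: compute distances in $G\times H$ via Remark~\ref{dir-distance}, identify the diametral pairs (distance $4$) as precisely the edges of $G\Box H$, and then for every other pair exhibit a neighbor at strictly greater distance to break MMD. Your presentation is in fact somewhat cleaner than the paper's, since you compute the pairs $(d^e,d^o)\in\{(0,5),(4,1),(2,3)\}$ explicitly and then run through the six configurations abstractly, whereas the paper only bounds these distances and argues by picking specific labeled five-cycles $g_1g_2g_3g_4g_5g_1$ and $h_1h_2h_3h_4h_5h_1$ and inspecting $C_5\times C_5$ (Figure~\ref{second}); your ``key auxiliary lemma'' is the abstract form of the paper's choice of the vertex $d$ on the cycle $h\,a\,h'\,c\,d\,h$.
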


\begin{proof}
Let $V(G)=\{g_1,\ldots ,g_n\}$ and $V(H)=\{h_1,\ldots ,h_k\}$. Note that $G$ and $H$ are $C_5$-connected graphs, which implies that their even and odd distances between arbitrary vertices always exist. Moreover, the even distances are between 0 and 4, while the odd distances are between 1 and 5. Now, according to Remark \ref{dir-distance}, the distances in $G\times H$ are between 0 and 4. We may assume that $g_1g_2g_3g_4g_5g_1$
and $h_1h_2h_3h_4h_5h_1$ are induced five-cycles of triangle free $C_5$-connected graphs $G$ and $H$, respectively. Again, by this
distance formula, it is easy to see that $d_{G\times H}((g_1,h_1),(g_1,h_j))=4$ for $j\in \{2,5\}$ and that $d_{G\times H}((g_1,h_1),(g_j,h_1))=4$ for
$j\in \{2,5\}$. We show that these are the only neighbors of $(g_1,u_1)$ in $(G\times H)_{SR}$. Clearly, these pairs are mutually
maximally distant since they are diametral vertices.

We now show that $(g_1,u_1)$ is not MMD with any other vertex of $G\times K_{k,\ell}$. By the symmetry of a five-cycle
and the commutativity of the direct product we need to present the arguments only for $g_1,g_2$ and $g_3$ and for $h_1,h_2$ and $h_3$.
They are as follows:
\begin{itemize}
\item $(g_1,h_3)\sim (g_2,h_4)$ and $(g_1,h_3)$ is closer to $(g_1,h_1)$ than $(g_2,h_4)$;
\item $(g_2,h_2)\sim (g_3,h_1)$ and $(g_2,h_2)$ is closer to $(g_1,h_1)$ than $(g_3,h_1)$;
\item $(g_2,h_3)\sim (g_1,h_2)$ and $(g_2,h_3)$ is closer to $(g_1,h_1)$ than $(g_1,h_2)$;
\item $(g_3,h_1)\sim (g_4,h_2)$ and $(g_3,h_1)$ is closer to $(g_1,h_1)$ than $(g_4,h_2)$;
\item $(g_3,h_2)\sim (g_2,h_1)$ and $(g_3,h_2)$ is closer to $(g_1,h_1)$ than $(g_2,h_1)$;
\item $(g_3,h_3)\sim (g_2,h_4)$ and $(g_3,h_3)$ is closer to $(g_1,h_1)$ than $(g_2,h_4)$.
\end{itemize}
See the graph $C_5\times C_5$ on the right part of Figure \ref{second}, where the distances from $(g_1,h_1)$ are marked.
So, the vertex $(g_1,h_1)$ is adjacent to $(g_1,h_2),(g_1,h_5),(g_2,h_1)$ and $(g_5,h_1)$ in $(G\times K_{k,\ell})_{SR}$.
Continuing with the same arguments, we obtain that $(g_1,u_1)$ is adjacent to all vertices of
$(\{g_1\}\times N_H(h_1))\cup (N_G(g_1)\times \{h_1\})$ in $(G\times H)_{SR}$.
By using the same arguments for any vertex of $G\times H$ we obtain $(G\times H)_{SR}\cong G\Box H$, which completes the proof.
\end{proof}

\subsection{Cartesian sum and Strong product graphs}

The description of the strong resolving graph of $G\oplus H$ can be easily obtained from Proposition~\ref{proposition diameter two} and
Proposition~\ref{lem Cart sum diam}.

\begin{proposition}{\rm \cite{Kuziak2014b}}\label{SRGraphDianCartSumle2}
Let $G$ and $H$ be two nontrivial graphs such that at least one of them is noncomplete. If $D(G)\le 2$
or neither $G$ nor  $H$ has isolated vertices, then
$$(G\oplus H)_{SR}\cong (G\oplus H)^*_-.$$
\end{proposition}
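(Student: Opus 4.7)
The proof I would write is essentially a two-line application of the machinery already set up in the paper. The plan is to show that under the given hypotheses the diameter of $G\oplus H$ equals $2$, and then invoke Proposition~\ref{proposition diameter two} directly.

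First I would handle the diameter. The hypothesis splits into two cases: either $D(G)\le 2$, or neither factor has isolated vertices. If neither $G$ nor $H$ has isolated vertices, then item (iii) of Proposition~\ref{lem Cart sum diam} immediately gives $D(G\oplus H)=2$. If instead $D(G)\le 2$, I use that the Cartesian sum is commutative, so $G\oplus H\cong H\oplus G$, and then item (iv) of Proposition~\ref{lem Cart sum diam} (applied with the roles of the factors swapped, since it is the factor playing the role of ``$H$'' in that statement that must have diameter at most two) yields $D(G\oplus H)=2$. Note that the hypothesis that at least one of $G,H$ is noncomplete is exactly what is needed for Proposition~\ref{lem Cart sum diam} to be applicable.

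Once $D(G\oplus H)=2$ is established, I apply Proposition~\ref{proposition diameter two} to the graph $G\oplus H$ itself, obtaining
\[
(G\oplus H)_{SR}\;\cong\;(G\oplus H)^*_-,
\]
which is exactly the claim. No further computation of neighborhoods in the Cartesian sum is required, because the characterization of MMD pairs in a diameter-two graph (non-adjacent vertices, or true twins) has already been packaged into the operation $(\cdot)^*_-$.

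The only genuine obstacle is checking that Proposition~\ref{lem Cart sum diam} really applies in both branches of the hypothesis; in particular one must verify that when $D(G)\le 2$ but $G$ or $H$ might contain isolated vertices, case (iv) (rather than (v)) is the relevant one after commuting the factors. This is a small bookkeeping point but not a mathematical difficulty, so I expect the whole proof to fit comfortably in a short paragraph.
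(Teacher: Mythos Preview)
Your proposal is correct and follows exactly the same two-step route as the paper: use Proposition~\ref{lem Cart sum diam} to conclude $D(G\oplus H)=2$, then apply Proposition~\ref{proposition diameter two}. In fact you spell out more detail than the paper does (identifying items (iii) and (iv) and invoking commutativity for the $D(G)\le 2$ branch), whereas the paper simply cites Proposition~\ref{lem Cart sum diam} without specifying which items are being used.
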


\begin{proof}
We assume that $D(G)\le 2$
or neither $G$ nor  $H$ has isolated vertices. Then, by Proposition \ref{lem Cart sum diam} we have $D(G\oplus H)=2$ and hence, by Proposition~\ref{proposition diameter two}, $(G\oplus H)_{SR}\cong (G\oplus H)^*_-$.
\end{proof}

We now describe the structure of the strong resolving graph of $G\boxtimes H$.

\begin{lemma}{\rm \cite{Kuziak2013c}}\label{lem boundary}
Let $G$ and $H$ be two connected nontrivial graphs. Let $u,x$ be two vertices of $G$ and let $v,y$ be two vertices of $H$. Then $(u,v)$ and $(x,y)$ are MMD vertices in $G\boxtimes H$ if and only if one of the following conditions holds:
\begin{enumerate}[{\rm (i)}]
\item $u,x$ are MMD in $G$ and $v,y$ are MMD in $H$;
\item $u,x$ are MMD in $G$ and $v=y$;
\item $v,y$ are MMD in $H$ and $u=x$;
\item $u,x$ are MMD in $G$ and $d_G(u,x) > d_H(v,y)$;
\item $v,y$ are MMD in $H$ and $d_G(u,x) < d_H(v,y)$.
\end{enumerate}
\end{lemma}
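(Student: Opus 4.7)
The plan is to exploit the two structural identities that define the strong product: the distance formula $d_{G\boxtimes H}((a,b),(c,d))=\max\{d_G(a,c),d_H(b,d)\}$ and the neighborhood identity $N_{G\boxtimes H}(a,b)=N_G[a]\times N_H[b]$. Writing $d_1=d_G(u,x)$, $d_2=d_H(v,y)$, and $D=\max\{d_1,d_2\}$, the condition that $(u,v)$ is maximally distant from $(x,y)$ unfolds, via these identities, into the requirement that for every $w\in N_G(u)$ one has $d_G(w,x)\le D$ and for every $z\in N_H(v)$ one has $d_H(z,y)\le D$. The symmetric condition for $(x,y)$ maximally distant from $(u,v)$ yields the analogous inequalities with the roles of $u,v$ and $x,y$ swapped. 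Thus MMD in $G\boxtimes H$ decouples cleanly into separate statements in $G$ and in $H$, parametrized by which of $d_1$ and $d_2$ realizes the maximum $D$.

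From here I would run a case analysis on the relative sizes of $d_1$ and $d_2$. If $d_1>d_2$, the two conditions on $N_H(v)$ and $N_H(y)$ are automatic because any such $z$ satisfies $d_H(z,\cdot)\le d_2+1\le d_1=D$; so MMD reduces exactly to the requirement that no neighbor of $u$ is farther from $x$ than $d_1$ and vice versa, i.e., $u$ and $x$ are MMD in $G$. This is condition (iv). The case $d_1<d_2$ is symmetric and gives (v). If $d_1=d_2>0$, none of the four inequalities is automatic; the four conditions separate into two MMD relations, giving (i). Finally the degenerate cases $d_1=0$ (so $u=x$, forcing $d_2>0$ since the vertices are distinct) and $d_2=0$ give (iii) and (ii) respectively, where the neighborhood condition on the "collapsed" factor is trivially satisfied by $d(w,u)=1\le D$ while the other factor produces an MMD pair.

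To close the proof I would then verify the converse implications, namely that each of (i)--(v) does make $(u,v)$ and $(x,y)$ MMD in $G\boxtimes H$. For (iv), given $u,x$ MMD in $G$ and $d_1>d_2$, one checks the four inequalities directly: the two for the $H$-neighborhoods are automatic as above, and the two for the $G$-neighborhoods follow from the $G$-MMD hypothesis since $d_G(w,x)\le d_1=D$ for every $w\in N_G(u)$, and similarly on the other side. The case (v) is symmetric; (i) follows because whenever $d_1=d_2$ the two MMD hypotheses directly yield all four required inequalities, and when $d_1\ne d_2$ one of (iv) or (v) already applies. Cases (ii) and (iii) are verified in the same direct manner, noting that the factor-distance hypothesis $u=x$ (resp.\ $v=y$) makes the corresponding neighborhood inequality trivial.

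The main obstacle is purely bookkeeping: one must keep track of which of the four neighborhood inequalities become automatic in each sub-case, and ensure that the degenerate situations $d_1=0$ or $d_2=0$ are handled separately so that they produce precisely (ii) and (iii) rather than being absorbed into (iv) or (v). Beyond this organizational care, no further ingredient is needed, since the distance formula and neighborhood identity do all the work of translating an MMD relation in $G\boxtimes H$ into separable MMD and distance statements in the factors.
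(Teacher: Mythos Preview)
The paper does not actually include a proof of this lemma; it is merely quoted from \cite{Kuziak2013c} and then used as input to Theorem~\ref{th products subgraphs}. So there is nothing in the paper to compare your argument against.

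That said, your approach is correct and is essentially the natural one. Your key reduction---that $(u,v)$ is maximally distant from $(x,y)$ in $G\boxtimes H$ if and only if $d_G(w,x)\le D$ for every $w\in N_G(u)$ and $d_H(z,y)\le D$ for every $z\in N_H(v)$, where $D=\max\{d_1,d_2\}$---follows cleanly from the distance and closed-neighborhood formulas for the strong product, and your subsequent case split on the sign of $d_1-d_2$ recovers exactly the five clauses. One small remark: the bookkeeping worry you raise about the degenerate cases is unnecessary, since (ii) is literally a special case of (iv) (if $v=y$ then $d_H(v,y)=0<d_G(u,x)$ whenever $u\ne x$) and likewise (iii) is a special case of (v); so your case analysis on $d_1>d_2$, $d_1<d_2$, $d_1=d_2$ already covers everything without needing $d_1=0$ or $d_2=0$ to be treated separately. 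The lemma's list is simply redundant in that respect.
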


We need to introduce more notation. Let $G=(V,E)$ and $G'=(V',E')$ be two graphs. If $V'\subseteq V$ and $E'\subseteq E$, then $G'$ is a subgraph of $G$ and we denote that by $G'\sqsubseteq G$\label{g subgraph}. Notice that Lemma \ref{lem boundary} leads to the following relationship.

\begin{theorem}{\rm \cite{Kuziak2013c}}\label{th products subgraphs}
For any connected graphs $G$ and $H$,
$$G_{SR+I}\boxtimes H_{SR+I}\sqsubseteq (G\boxtimes H)_{SR+I}\sqsubseteq G_{SR+I}\oplus H_{SR+I}.$$
\end{theorem}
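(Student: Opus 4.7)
The plan is to verify each of the two edge containments by a direct case analysis driven by Lemma~\ref{lem boundary}. Note first that all three graphs $G_{SR+I}\boxtimes H_{SR+I}$, $(G\boxtimes H)_{SR+I}$, and $G_{SR+I}\oplus H_{SR+I}$ share the common vertex set $V(G)\times V(H)$ (isolated vertices are kept in the ``$+I$'' version), so it suffices to check edge inclusions.

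For the first inclusion, I would take an arbitrary edge $(u,v)(x,y)$ of $G_{SR+I}\boxtimes H_{SR+I}$ and split into the three defining subcases of the strong product: either (a) $u=x$ and $vy\in E(H_{SR+I})$, or (b) $v=y$ and $ux\in E(G_{SR+I})$, or (c) both $ux\in E(G_{SR+I})$ and $vy\in E(H_{SR+I})$. Since adjacency in $G_{SR+I}$ (resp.\ $H_{SR+I}$) is precisely the MMD relation in $G$ (resp.\ $H$), subcases (a), (b), (c) match conditions (iii), (ii), (i) of Lemma~\ref{lem boundary}, respectively. In each subcase Lemma~\ref{lem boundary} yields that $(u,v)$ and $(x,y)$ are MMD in $G\boxtimes H$, hence $(u,v)(x,y)\in E((G\boxtimes H)_{SR+I})$.

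For the second inclusion, I would take an arbitrary edge $(u,v)(x,y)$ of $(G\boxtimes H)_{SR+I}$ and apply Lemma~\ref{lem boundary} to obtain that at least one of the five listed conditions holds. Conditions (i), (ii), (iv) each imply that $u$ and $x$ are MMD in $G$ (in particular $u\ne x$), so $ux\in E(G_{SR+I})$; conditions (i), (iii), (v) each imply that $v$ and $y$ are MMD in $H$ (in particular $v\ne y$), so $vy\in E(H_{SR+I})$. In every case, one of the two defining alternatives for an edge of the Cartesian sum is satisfied, whence $(u,v)(x,y)\in E(G_{SR+I}\oplus H_{SR+I})$.

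The argument is essentially mechanical once Lemma~\ref{lem boundary} is available; I do not anticipate any real obstacle. The only minor point to keep in mind is to confirm, in each case, that the relevant coordinates are actually distinct so that we obtain a genuine edge rather than a loop; this follows immediately from the fact that the MMD relation is defined between distinct vertices. It may also be worth remarking, for contextual clarity, that both inclusions can be strict in general, which justifies stating the result as a sandwich rather than an equality.
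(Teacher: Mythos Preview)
Your proposal is correct and follows essentially the same approach as the paper's proof: both arguments first observe that the three graphs share the vertex set $V(G)\times V(H)$, then establish the left inclusion by matching the three defining cases of the strong product to conditions (iii), (ii), (i) of Lemma~\ref{lem boundary}, and the right inclusion by noting that every condition in Lemma~\ref{lem boundary} forces $ux\in E(G_{SR+I})$ or $vy\in E(H_{SR+I})$. Your version is slightly more explicit in enumerating which of the five conditions yield which edge, and your closing remarks on distinctness and possible strictness are reasonable additions, but the core argument is identical.
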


\begin{proof}
Notice that $$V(G_{SR+I}\boxtimes H_{SR+I})=V((G\boxtimes H)_{SR+I})=V(G_{SR+I}\oplus H_{SR+I})=V(G)\times V(H).$$ Let $(u,v)$ and $(x,y)$ be two vertices adjacent in $G_{SR+I}\boxtimes H_{SR+I}$. So, either
\begin{itemize}
\item $u=x$ and $vy\in E(H_{SR+I})$, or
\item $ux\in E(G_{SR+I})$ and $v=y$, or
\item $ux\in E(G_{SR+I})$ and $vy\in E(H_{SR+I})$.
\end{itemize}
Hence, by using respectively the condition (iii), (ii) and (i) of Lemma \ref{lem boundary} we have that $(u,v)$ and $(x,y)$ are also adjacent in $(G\boxtimes H)_{SR+I}$.

Now, let $(u',v')$ and $(x',y')$ be two vertices adjacent in $(G\boxtimes H)_{SR+I}$. From Lemma \ref{lem boundary} we obtain that $u'x'\in E(G_{SR+I})$ or $v'y'\in E(H_{SR+I})$. Thus, $(u',v')$ and $(x',y')$ are also adjacent in $G_{SR+I}\oplus H_{SR+I}$.
\end{proof}

\subsection{Lexicographic product graphs}\label{SectionLexicographic}


From the next lemmas we can describe the structure of the strong resolving graph of $G\circ H$.

\begin{lemma}{\rm \cite{Kuziak2014}}\label{lem mmd}
Let $G$ be a connected nontrivial graph and let $H$ be a nontrivial graph. Let $a,b\in V(G)$ be such that they are not true twin vertices and let $x,y\in V(H)$. Then $(a,x)$ and $(b,y)$ are MMD in $G\circ H$ if and only if $a$ and $b$ are MMD in $G$.
\end{lemma}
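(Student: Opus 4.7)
My plan is to prove both directions by translating the MMD condition in $G\circ H$ back to the MMD condition in $G$ via the distance and neighbourhood formulas of Theorem~\ref{basictoolLexicographic}. Since the conclusion only makes sense for $a\neq b$, I will take this as an implicit hypothesis; then $d_{G\circ H}((a,x),(b,y))=d_G(a,b)$ and the neighbourhood $N_{G\circ H}(a,x)$ splits into the $H$--fibre $\{a\}\times N_H(x)$ (on which every vertex is at distance $d_G(a,b)$ from $(b,y)$) and the $G$--fibre $N_G(a)\times V(H)$.

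The key preliminary step, which I would establish first, is that under the non-twin hypothesis both MMD conditions force $a\not\sim b$, i.e.\ $d_G(a,b)\geq 2$. If $a,b$ were MMD in $G$ with $a\sim b$, every $w\in N_G(a)$ would satisfy $d_G(w,b)\leq d_G(a,b)=1$, so $w\in N_G[b]$, giving $N_G[a]\subseteq N_G[b]$, and by the symmetric MMD condition $N_G[a]=N_G[b]$, contradicting the hypothesis. If $(a,x),(b,y)$ were MMD in $G\circ H$ with $a\sim b$, the same inequality applied to neighbours of the form $(w,z)\in N_G(a)\times V(H)$ would once more force $N_G[a]=N_G[b]$. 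This is, in my view, the only real obstacle in the proof: without it, the $G$--fibre neighbour $(b,z)$ (which only appears when $a\sim b$) would have distance $\min\{d_H(z,y),2\}$ to $(b,y)$, potentially exceeding $d_G(a,b)=1$ and breaking the equivalence.

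Once $d_G(a,b)\geq 2$ is secured, both directions become routine. For the forward direction, assuming $a,b$ are MMD in $G$, each $H$--fibre neighbour of $(a,x)$ sits at distance exactly $d_G(a,b)$ from $(b,y)$, and each $G$--fibre neighbour $(w,z)$ has $w\neq b$ and hence distance $d_G(w,b)\leq d_G(a,b)$ to $(b,y)$; the symmetric check for $(b,y)$ is identical. For the converse, given any $w\in N_G(a)$ (necessarily $\neq b$) and any $z\in V(H)$, the neighbour $(w,z)$ of $(a,x)$ satisfies $d_G(w,b)=d_{G\circ H}((w,z),(b,y))\leq d_G(a,b)$, so $a$ is maximally distant from $b$ in $G$; swapping roles finishes the argument.
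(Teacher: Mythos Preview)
Your proof is correct and follows essentially the same route as the paper's: both arguments use the distance and neighbourhood formulas of Theorem~\ref{basictoolLexicographic} together with the observation that the non-true-twin hypothesis forces $d_G(a,b)\ge 2$, after which the MMD conditions in $G\circ H$ and in $G$ translate into one another directly. Your version is in fact slightly tidier, since you establish $d_G(a,b)\ge 2$ up front for both directions (the paper states it only for the forward implication and leaves the converse to work without it).
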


\begin{proof}
Let $x,y\in V(H)$. We assume that $a,b\in V(G)$ are MMD in $G$ and that they are not true twins. 
 First of all, notice that $d_G(a,b)\ge 2$, (if $d_G(a,b)=1$, then to be MMD in $G$, they must be true twins).
 Hence, by Theorem \ref{basictoolLexicographic} (i) we have that if
$(c,d)\in N_{G\circ H}(b,y)$, then either $c=b$ or $c\in N_G(b)$. In both cases, by Theorem \ref{basictoolLexicographic} (ii) we obtain $d_{G\circ H}((a,x),(c,d))=d_G(a,c)\le d_G(a,b)=d_{G\circ H}((a,x),(b,y))$. So, $(b,y)$ is maximally distant from $(a,x)$ and, by symmetry, we conclude that $(b,y)$ and $(a,x)$ are MMD in $G\circ H$.

Conversely, assume that $(a,x)$ and $(b,y)$, $a\ne b$, are MMD in $G\circ H$. If  $c\in N_G(b)$, then for any $z\in V(H)$ we have $(c,z)\in N_{G\circ H}(b,y)$. Now,  by Theorem \ref{basictoolLexicographic} (ii) we obtain $d_G(a,c)=d_{G\circ H}((a,x),(c,z))\le d_{G\circ H}((a,x),(b,y))=d_G(a,b)$.
So, $b$ is maximally distant from $a$ and, by symmetry, we conclude that $b$ and $a$ are MMD in $G$.
\end{proof}

\begin{lemma}{\rm \cite{Kuziak2014}}\label{lemmaTrueTwins}
Let $G$ be a connected nontrivial  graph, let $H$ be a graph of order $n\ge 2$, let $a,b\in V(G)$ be two distinct true twin vertices and let $x,y\in V(H)$. Then $(a,x)$ and $(b,y)$ are  MMD in $G\circ H$ if and only if both, $x$ and $y$, have degree $n-1$.
\end{lemma}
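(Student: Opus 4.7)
The plan is to exploit the fact that true twins $a,b$ in $G$ satisfy $N_G[a]=N_G[b]$, so in particular $a\sim b$, giving $d_G(a,b)=1$ and hence $d_{G\circ H}((a,x),(b,y))=d_G(a,b)=1$ by Theorem~\ref{basictoolLexicographic}(ii). So the MMD condition reduces to checking that every neighbor of $(b,y)$ in $G\circ H$ is at distance at most $1$ from $(a,x)$, and vice versa.

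For the ``only if'' direction, I would enumerate the neighbors of $(b,y)$ using Theorem~\ref{basictoolLexicographic}(i): they form $(\{b\}\times N_H(y))\cup(N_G(b)\times V(H))$. Neighbors of the form $(b,z)$ with $z\in N_H(y)$ are at distance $d_G(a,b)=1$, and neighbors $(c,z)$ with $c\in N_G(b)\setminus\{a\}$ lie in $N_G(a)$ by the true-twin hypothesis $N_G[a]=N_G[b]$, so they are at distance $d_G(a,c)=1$. The only problematic neighbors are of the form $(a,z)$ (which are indeed neighbors since $a\in N_G(b)$); here Theorem~\ref{basictoolLexicographic}(iii) gives $d_{G\circ H}((a,x),(a,z))=\min\{d_H(x,z),2\}$, and forcing this to be at most $1$ for every $z\neq x$ is exactly the statement that $z\in N_H(x)$ for every $z\neq x$, i.e.\ $\delta_H(x)=n-1$. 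Running the symmetric argument with the roles of $(a,x)$ and $(b,y)$ swapped yields $\delta_H(y)=n-1$.

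For the ``if'' direction, I would simply check that both conditions on neighborhoods above are now satisfied. Assuming $\delta_H(x)=\delta_H(y)=n-1$, every neighbor $(c,z)$ of $(b,y)$ is either of the first two types (giving distance $1$ to $(a,x)$) or has $c=a$, and in the latter case $z\in N_H[x]$ forces $d_{G\circ H}((a,x),(a,z))\le 1$. Hence $(b,y)$ is maximally distant from $(a,x)$, and a symmetric verification shows $(a,x)$ is maximally distant from $(b,y)$.

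The only subtle point, and the one I would be careful with, is the bookkeeping of which neighbors of $(b,y)$ are forced by the true-twin relation $N_G[a]=N_G[b]$ to have distance $1$ from $(a,x)$ automatically, versus which ones (precisely those in $\{a\}\times(V(H)\setminus\{x\})$) actually impose the degree condition on $x$ in $H$. Once this case distinction is made cleanly, the proof is essentially a direct verification using Theorem~\ref{basictoolLexicographic}.
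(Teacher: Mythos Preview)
Your proof is correct and follows essentially the same approach as the paper: both arguments reduce the MMD question to distance~$1$ via $d_G(a,b)=1$, and both pivot on the neighbor $(a,z)$ of $(b,y)$ (coming from $a\in N_G(b)$) to force $\delta_H(x)=n-1$ in the ``only if'' direction. The only minor difference is that for the ``if'' direction the paper observes in one line that $\delta_H(x)=\delta_H(y)=n-1$ makes $(a,x)$ and $(b,y)$ true twins in $G\circ H$ (since $N_{G\circ H}[(a,x)]=N_G[a]\times V(H)=N_G[b]\times V(H)=N_{G\circ H}[(b,y)]$), which immediately gives MMD, whereas you verify the maximally-distant condition neighbor by neighbor; this is purely cosmetic.
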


\begin{proof}
If  $x\in V(H)$ has degree $n-1$, then for any  $y\in V(H)$ of degree $n-1$ we have that
$(a,x)$  and $(b,y)$ are true twins in $G\circ H$. Hence, $(a,x)$ and $(b,y)$ are  MMD in $G\circ H$.

Now, suppose that  there exists $z\in V(H)-N_H(x)$. By Theorem \ref{basictoolLexicographic} (iii), it follows that $d_{G\circ H}((a,x),(a,z))=2$. Also, for every $y\in V(H)$, Theorem \ref{basictoolLexicographic} (ii) gives $d_{G\circ H}((a,x),(b,y))=1$. Thus, we conclude that $(a,x)$ and $(b,y)$ are not MMD in $G\circ H$.
\end{proof}

The strong resolving graph of the Lexicographic product can be described using graphs $G^*$ and $G^*_-$ already defined in Section~\ref{SectionDetermination problem}.

\begin{remark}{\rm \cite{Kuziak2014}}\label{rem G*}
 Let $G$ be a connected graph of diameter $D(G)$, order $n$ and maximum degree $\Delta(G).$
\begin{enumerate}[{\rm (i)}]
\item If $\Delta(G)\le n-2$, then $G^*\cong (K_1+G)_{SR}$. 
\item If $D(G)\le 2$, then $G^*_-\cong G_{SR}$.

\item If $G$ has no true twins, then $G^*\cong G^c$.
\end{enumerate}
\end{remark}

\begin{lemma}{\rm \cite{Kuziak2014}}\label{lem mmd in copy H}
Let $G$ be a connected nontrivial graph. Let $x,y\in V(H)$ be two distinct vertices of a graph $H$  and let $a\in V(G)$. Then $(a,x)$ and $(a,y)$ are MMD vertices in $G\circ H$ if and only if $x$ and $y$ are adjacent in $H^*$.
\end{lemma}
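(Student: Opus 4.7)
The plan is to prove this by a clean case analysis driven by the distance $d_{G\circ H}((a,x),(a,y))$, which by Theorem~\ref{basictoolLexicographic}(iii) equals $\min\{d_H(x,y),2\}$, and hence is either $1$ or $2$. Recall that $xy\in E(H^*)$ means either $d_H(x,y)\ge 2$ or $x,y$ are true twins in $H$. So the biconditional splits naturally into: (a) the non-adjacent case in $H$ (distance $2$ in $G\circ H$), which should always give MMD; and (b) the adjacent case in $H$ (distance $1$ in $G\circ H$), where MMD is equivalent to being true twins in $G\circ H$, which in turn should reduce to being true twins in $H$.

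For direction ($\Leftarrow$), I would handle the two cases separately. If $d_H(x,y)\ge 2$, then $d_{G\circ H}((a,x),(a,y))=2$, and I check that every neighbor $(b,z)$ of $(a,y)$ lies at distance $\le 2$ from $(a,x)$: by Theorem~\ref{basictoolLexicographic}(i), either $b=a$ with $z\in N_H(y)$ (giving $d_{G\circ H}((a,x),(a,z))\le 2$ by part (iii) of the same theorem), or $b\in N_G(a)$ with $z\in V(H)$ (giving distance $d_G(a,b)=1$ by part (ii)). Maximality of $(a,x)$ from $(a,y)$ follows by symmetry. If instead $x,y$ are true twins in $H$, then $N_H[x]=N_H[y]$ easily implies $N_{G\circ H}[(a,x)]=N_{G\circ H}[(a,y)]$, so $(a,x),(a,y)$ are true twins in $G\circ H$ and hence MMD.

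For direction ($\Rightarrow$), I argue the contrapositive: assume $xy\notin E(H^*)$, i.e.\ $x,y$ are adjacent in $H$ but are not true twins in $H$. Then $d_{G\circ H}((a,x),(a,y))=1$. Since $N_H[x]\ne N_H[y]$, we may assume by symmetry there exists $z\in N_H[x]\setminus N_H[y]$; this $z$ is neither $x$ nor $y$ (because $x\in N_H[y]$ as $xy\in E(H)$, and $y\notin N_H[y]\setminus N_H[y]$... wait, $y\in N_H[y]$, so $z\ne y$; and $x\in N_H[x]$, but $x\in N_H[y]$ too, so $z\ne x$). Then $(a,z)\in N_{G\circ H}(a,x)$, while $z\notin N_H[y]$ forces $d_H(z,y)\ge 2$ and therefore $d_{G\circ H}((a,z),(a,y))=2>1=d_{G\circ H}((a,x),(a,y))$. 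Hence $(a,x)$ is not maximally distant from $(a,y)$, so the two are not MMD.

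The only mild obstacle is keeping the bookkeeping in the true-twin case straight, specifically verifying that $z\neq x$ when extracting an element of $N_H[x]\setminus N_H[y]$; this uses the adjacency $xy\in E(H)$ to guarantee $x\in N_H[y]$. Everything else is routine application of Theorem~\ref{basictoolLexicographic}.
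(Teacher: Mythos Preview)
Your proof is correct and follows essentially the same approach as the paper's. The paper packages the argument as a chain of equivalences---$(a,x),(a,y)$ MMD $\Leftrightarrow$ they are true twins in $G\circ H$ or nonadjacent in $G\circ H$ $\Leftrightarrow$ $x,y$ are true twins in $H$ or nonadjacent in $H$---while you unfold the same case analysis into separate $\Leftarrow$ and $\Rightarrow$ directions with more explicit detail, but the underlying ideas are identical.
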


\begin{proof}
By Theorem \ref{basictoolLexicographic} (iii),  $ d_{G\circ H}((a,x),(a,y))\le 2$ and, by Theorem \ref{basictoolLexicographic} (i), if $c\ne a$, then  $(c,w)\in N_{G\circ H}(a,x)$ if and only if $c\in N_{G}(a)$. Hence,  $(a,x)$ and $(a,y)$ are MMD if and only if either $(a,x)$ and $(a,y)$ are true twins in $G\circ H$ or $(a,x)$ and $(a,y)$ are not adjacent in $G\circ H$.

On one hand, by the definition of the lexicographic product, $(a,x)$ and $(a,y)$ are not adjacent in $G\circ H$ if and only if $x$ and $y$ are not adjacent in $ H$.

Moreover, by Theorem \ref{basictoolLexicographic} (i),  $(a,x)$ and $(a,y)$ are true twins in $G\circ H$ if and only if $x$ and $y$ are true twins in $H$.

Therefore, the result follows.
\end{proof}


\begin{proposition}{\rm \cite{Kuziak2014}}\label{SR-graph-twins-free}
Let $G$ be a connected  graph of order $n\ge 2$ and let $H$ be a noncomplete graph of order $n'\ge 2$.
If $G$ has no true twin vertices, then
$$(G\circ H)_{SR}\cong \left(G_{SR}\circ H^*\right) \cup \bigcup_{i=1}^{n-|\partial(G)|} H^*_-. $$
\end{proposition}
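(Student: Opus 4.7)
The plan is to classify every vertex of $G\circ H$ as either a boundary vertex or not, and then determine all edges of $(G\circ H)_{SR}$ by splitting into two cases according to whether two vertices $(a,x), (b,y)$ share a first coordinate. Since $G$ has no true twin vertices, Lemma~\ref{lem mmd} gives a clean criterion in the case $a\neq b$: the pair is MMD in $G\circ H$ if and only if $a$ and $b$ are MMD in $G$ (independently of $x,y$). In the case $a=b$, Lemma~\ref{lem mmd in copy H} applies directly: $(a,x)$ and $(a,y)$ are MMD if and only if $xy\in E(H^*)$.

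With these two criteria in hand, I would first pin down $\partial(G\circ H)$. If $a\in\partial(G)$, there is some $b\in V(G)$ MMD with $a$, and then for any $x,y\in V(H)$ the pair $(a,x),(b,y)$ is MMD in $G\circ H$, so $(a,x)\in\partial(G\circ H)$ for every $x$. If $a\notin\partial(G)$, no vertex with distinct first coordinate can witness $(a,x)$ as a boundary vertex, so the only possible witnesses are pairs $(a,y)$ with $y\neq x$; by Lemma~\ref{lem mmd in copy H} such a witness exists exactly when $x$ is not isolated in $H^*$. Note that since $H$ is noncomplete, $H^*$ has at least one edge, so this is a meaningful restriction giving precisely the vertex set of $H^*_-$ within each copy $\{a\}\times V(H)$.

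Next I would describe the edges on these two pieces of the vertex set. On $\partial(G)\times V(H)$, combining the two criteria we obtain exactly the adjacency defining the lexicographic product $G_{SR}\circ H^*$: namely $(a,x)\sim(b,y)$ with $a\neq b$ iff $ab\in E(G_{SR})$ (regardless of $x,y$), and $(a,x)\sim(a,y)$ iff $xy\in E(H^*)$. On $\{a\}\times V(H)$ for each $a\notin\partial(G)$, no cross-copy edges exist (since $a$ is MMD with nobody in $G$), so the induced subgraph on the boundary vertices in this copy is exactly $H^*_-$; there are $n-|\partial(G)|$ such copies and, for the same reason, no edges between them. Thus the three pieces are pairwise disjoint and together exhaust $(G\circ H)_{SR}$, giving the claimed isomorphism.

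The main obstacle is subtle but essentially bookkeeping: one must verify that no edges connect the $G_{SR}\circ H^*$ part to the $H^*_-$ copies, and that the isolated vertices of $H^*$ correctly drop out precisely in the copies indexed by $V(G)\setminus\partial(G)$ while being retained in the copies indexed by $\partial(G)$. The first point follows because any pair $(a,x),(b,y)$ with $a\notin\partial(G)$ and $a\neq b$ fails the Lemma~\ref{lem mmd} criterion; the second point is automatic from the definition of the lexicographic product $G_{SR}\circ H^*$, whose vertex set is $\partial(G)\times V(H)$ without removing isolated vertices of $H^*$. Once these two checks are made, the decomposition falls out of the case analysis.
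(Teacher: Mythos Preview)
Your proposal is correct and follows essentially the same approach as the paper: both arguments rely on Lemmas~\ref{lem mmd} and~\ref{lem mmd in copy H} to handle the cases $a\neq b$ and $a=b$ separately, identify $\partial(G\circ H)$ accordingly, and then check that no edges cross between the $G_{SR}\circ H^*$ piece and the $H^*_-$ copies. Your write-up is in fact more explicit than the paper's, which simply lists the resulting facts as bullet points without spelling out the boundary computation or the role of the noncompleteness of $H$.
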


\begin{proof}
We assume that $G$ has no true twin vertices. By Lemmas \ref{lem mmd} and \ref{lem mmd in copy H}, we have the following facts.
\begin{itemize}
\item For any $a\not\in \partial (G)$ it follows that $(G\circ H)_{SR}$ has a subgraph, say $H_a$, induced by $(\{a\}\times V(H))\cap \partial(G\circ H)$ which is isomorphic to $H^*_-$
\item For any $b\in \partial (G)$, we have that $(G\circ H)_{SR}$ has a subgraph, say $H_b$, induced by $(\{b\}\times V(H))\cap \partial (G\circ H)$ which is isomorphic to $H^* $.
\item The set $(\partial(G)\times V(H))\cap \partial (G\circ H)$ induces a subgraph in $(G\circ H)_{SR}$ which is isomorphic to $G_{SR}\circ H^*$.
\item For any $a\not\in \partial (G)$ and any $b\in \partial (G)$ there are no edges of $(G\circ H)_{SR}$ joining vertices belonging to $H_a$ with vertices belonging to $H_b$.
\item For any distinct vertices $a_1,a_2\not\in \partial (G)$ there are no edges of $(G\circ H)_{SR}$ joining vertices belonging to $H_{a_1}$ with vertices belonging to $H_{a_2}$.
\end{itemize}
Therefore, the result follows.
\end{proof}

Figure \ref{no true twins} shows the graph $P_4\circ P_3$ and its strong resolving graph. Notice that $(P_3)_-^*\cong K_2$, $(P_3)^*\cong K_2\cup K_1$ and $(P_4)_{SR}\cong K_2$. So, $(P_4\circ P_3)_{SR}\cong K_2\circ (K_2\cup K_1)\cup K_2\cup K_2$.

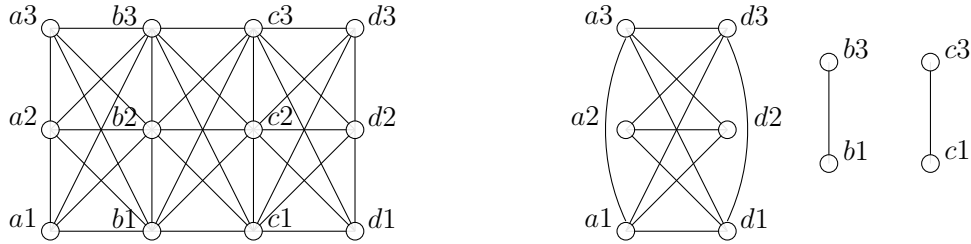
\begin{figure}[ht]
\centering
\begin{tikzpicture}[scale=.9, transform shape]
\draw(0,0)--(0,3);
\draw(1.5,0)--(1.5,3);
\draw(3,0)--(3,3);
\draw(4.5,0)--(4.5,3);
\draw(0,0)--(4.5,0);
\draw(0,1.5)--(4.5,1.5);
\draw(0,3)--(4.5,3);

\draw(0,0)--(1.5,1.5);
\draw(0,0)--(1.5,3);

\draw(0,1.5)--(1.5,0);
\draw(0,1.5)--(1.5,3);

\draw(0,3)--(1.5,0);
\draw(0,3)--(1.5,1.5);

\draw(1.5,0)--(3,1.5);
\draw(1.5,0)--(3,3);

\draw(1.5,1.5)--(3,0);
\draw(1.5,1.5)--(3,3);

\draw(1.5,3)--(3,0);
\draw(1.5,3)--(3,1.5);

\draw(3,0)--(4.5,1.5);
\draw(3,0)--(4.5,3);

\draw(3,1.5)--(4.5,0);
\draw(3,1.5)--(4.5,3);

\draw(3,3)--(4.5,0);
\draw(3,3)--(4.5,1.5);

\filldraw[fill opacity=0.9,fill=white]  (0,0) circle (0.13cm);
\filldraw[fill opacity=0.9,fill=white]  (0,1.5) circle (0.13cm);
\filldraw[fill opacity=0.9,fill=white]  (0,3) circle (0.13cm);
\filldraw[fill opacity=0.9,fill=white]  (1.5,0) circle (0.13cm);
\filldraw[fill opacity=0.9,fill=white]  (1.5,1.5) circle (0.13cm);
\filldraw[fill opacity=0.9,fill=white]  (1.5,3) circle (0.13cm);
\filldraw[fill opacity=0.9,fill=white]  (3,0) circle (0.13cm);
\filldraw[fill opacity=0.9,fill=white]  (3,1.5) circle (0.13cm);
\filldraw[fill opacity=0.9,fill=white]  (3,3) circle (0.13cm);
\filldraw[fill opacity=0.9,fill=white]  (4.5,0) circle (0.13cm);
\filldraw[fill opacity=0.9,fill=white]  (4.5,1.5) circle (0.13cm);
\filldraw[fill opacity=0.9,fill=white]  (4.5,3) circle (0.13cm);
\node at (-0.4,0.2) {$a1$ };
\node at (-0.4,1.7) {$a2$ };
\node at (-0.4,3.2) {$a3$ };
\node at (1.1,0.2) {$b1$ };
\node at (1.1,1.7) {$b2$ };
\node at (1.1,3.2) {$b3$ };
\node at (3.4,0.2) {$c1$ };
\node at (3.4,1.7) {$c2$ };
\node at (3.4,3.2) {$c3$ };
\node at (4.9,0.2) {$d1$ };
\node at (4.9,1.7) {$d2$ };
\node at (4.9,3.2) {$d3$ };

\draw(8.5,0)--(10,1.5);
\draw(8.5,0)--(10,3);

\draw(8.5,1.5)--(10,0);
\draw(8.5,1.5)--(10,3);

\draw(8.5,3)--(10,0);
\draw(8.5,3)--(10,1.5);

\draw(11.5,1)--(11.5,2.5);

\draw(13,1)--(13,2.5);

\draw (8.5,0) -- (10,0);
\draw (8.5,1.5) -- (10,1.5);
\draw (8.5,3) -- (10,3);

\filldraw[fill opacity=0.9,fill=white]  (8.5,0) circle (0.13cm);
\filldraw[fill opacity=0.9,fill=white]  (8.5,1.5) circle (0.13cm);
\filldraw[fill opacity=0.9,fill=white]  (8.5,3) circle (0.13cm);
\filldraw[fill opacity=0.9,fill=white]  (10,0) circle (0.13cm);
\filldraw[fill opacity=0.9,fill=white]  (10,1.5) circle (0.13cm);
\filldraw[fill opacity=0.9,fill=white]  (10,3) circle (0.13cm);
\filldraw[fill opacity=0.9,fill=white]  (11.5,1) circle (0.13cm);
\filldraw[fill opacity=0.9,fill=white]  (11.5,2.5) circle (0.13cm);
\filldraw[fill opacity=0.9,fill=white]  (13,1) circle (0.13cm);
\filldraw[fill opacity=0.9,fill=white]  (13,2.5) circle (0.13cm);

\draw (8.5,0.15) .. controls (8.1,1) and (8.1,2) .. (8.5,2.85);
\draw (10,0.15) .. controls (10.4,1) and (10.4,2) .. (10,2.85);

\node at (8.1,0.2) {$a1$ };
\node at (7.9,1.7) {$a2$ };
\node at (8.1,3.2) {$a3$ };
\node at (11.9,1.2) {$b1$ };
\node at (11.9,2.7) {$b3$ };
\node at (10.4,0.2) {$d1$ };
\node at (10.6,1.7) {$d2$ };
\node at (10.4,3.2) {$d3$ };
\node at (13.4,1.2) {$c1$ };
\node at (13.4,2.7) {$c3$ };

\end{tikzpicture}
\caption{The graph $P_4\circ P_3$ and its strong resolving graph}
\label{no true twins}
\end{figure}

\begin{proposition}{\rm \cite{Kuziak2014}}\label{SR G and complete}
For any connected nontrivial graph $G$ of order $n\ge 2$ and any integer $n'\ge 2$,
$$(G\circ K_{n'})_{SR}\cong (G_{SR}\circ K_{n'}) \cup \bigcup_{i=1}^{n-|\partial(G)|} K_{n'}.$$
\end{proposition}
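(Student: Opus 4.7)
The plan is to classify pairs of vertices of $G\circ K_{n'}$ according to whether they share their first coordinate and, when they do not, whether that pair of first coordinates constitutes a pair of true twins in $G$. Applying Lemmas~\ref{lem mmd}, \ref{lemmaTrueTwins} and \ref{lem mmd in copy H} then lets me decide exactly which pairs are MMD in $G\circ K_{n'}$. The target decomposition will be read off directly from this classification, using the observation that every fiber $\{a\}\times V(K_{n'})$ induces a complete graph in the strong resolving graph, and that cross-fiber MMD pairs are governed purely by $G_{SR}$.

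First I would handle two vertices $(a,x),(b,y)$ with $a\neq b$. If $a,b$ are not true twins in $G$, Lemma~\ref{lem mmd} gives that they are MMD in $G\circ K_{n'}$ if and only if $a,b$ are MMD in $G$. If instead $a,b$ are true twins (and thus MMD in $G$), Lemma~\ref{lemmaTrueTwins} applies: since every vertex of $K_{n'}$ has degree $n'-1$, both $x$ and $y$ automatically satisfy the hypothesis, so $(a,x),(b,y)$ are MMD. Collapsing the two subcases yields
\[
(a,x) \text{ MMD } (b,y)\quad\Longleftrightarrow\quad ab\in E(G_{SR}).
\]
Next I would treat pairs sharing their first coordinate. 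By Lemma~\ref{lem mmd in copy H}, $(a,x)$ and $(a,y)$ are MMD exactly when $xy\in E(K_{n'}^{*})$. Because all vertices of $K_{n'}$ are mutually true twins, $K_{n'}^{*}\cong K_{n'}$, so every two distinct vertices of the fiber $\{a\}\times V(K_{n'})$ are MMD.

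Finally I would assemble the graph. Every fiber $\{a\}\times V(K_{n'})$ induces a $K_{n'}$ in $(G\circ K_{n'})_{SR}$, so in particular every vertex lies in $\partial(G\circ K_{n'})$ and $V((G\circ K_{n'})_{SR})=V(G)\times V(K_{n'})$. For $a\notin\partial(G)$, the first step guarantees no cross-fiber MMD pair containing $(a,x)$, so such a fiber contributes an isolated $K_{n'}$ component; since there are $n-|\partial(G)|$ such vertices, together they give $\bigcup_{i=1}^{n-|\partial(G)|} K_{n'}$. The subgraph induced by $\partial(G)\times V(K_{n'})$ has within-fiber edges forming a $K_{n'}$ and cross-fiber edges $(a,x)(b,y)$ precisely when $ab\in E(G_{SR})$; this is the very definition of $G_{SR}\circ K_{n'}$. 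Combining yields the claimed isomorphism. There is no genuine obstacle beyond keeping the cases straight; the only subtle point is noticing that the potentially troublesome true-twin case of Lemma~\ref{lemmaTrueTwins} is trivial here, because $K_{n'}$ is $(n'-1)$-regular, which is exactly what lets the two subcases of the $a\neq b$ analysis merge into the clean condition $ab\in E(G_{SR})$.
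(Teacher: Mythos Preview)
Your proof is correct and follows essentially the same approach as the paper: both arguments apply Lemma~\ref{lem mmd in copy H} with $K_{n'}^{*}\cong K_{n'}$ to handle within-fiber pairs, and Lemmas~\ref{lem mmd} and~\ref{lemmaTrueTwins} to handle cross-fiber pairs, then read off the decomposition. Your write-up is in fact more explicit than the paper's in spelling out why the true-twin subcase of Lemma~\ref{lemmaTrueTwins} collapses (via the $(n'-1)$-regularity of $K_{n'}$) into the single criterion $ab\in E(G_{SR})$.
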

\begin{proof}
Notice that $(K_{n'})^*\cong K_{n'}$ and, by Lemma \ref{lem mmd in copy H}, for any $a\in V(G)$, the subgraph of $(G\circ K_{n'})_{SR}$ induced by $(\{a\}\times V(K_{n'}))\cap \partial(G\circ K_{n'})$ is isomorphic to $K_{n'}$. Also, from Lemmas \ref{lem mmd} and \ref{lemmaTrueTwins}, the subgraph of $(G\circ K_{n'})_{SR}$ induced by $(\partial(G)\times V(K_{n'}))\cap \partial (G\circ K_{n'})$ is isomorphic to $G_{SR}\circ K_{n'}$. Moreover, for $a\not\in \partial (G)$ and $b\in \partial (G)$ there are not edges of $(G\circ K_{n'})_{SR}$ joining vertices belonging to $\{a\}\times V(K_{n'})$ with vertices belonging to $\{b\}\times V(K_{n'})$. Therefore, the result follows.
\end{proof}

We have studied the case in which the second factor in the lexicographic product is a complete graph. Since this product is not commutative, we now consider the case in which the first factor is a complete graph.

\begin{proposition}{\rm \cite{Kuziak2014}}\label{SR-Completegraph}
Let $n\ge 2$ be an integer and let $H$ be a graph of order $n'\ge 2$.
If $H$ has maximum degree $\Delta(H)\le n'-2$, then
$$(K_n\circ H)_{SR}\cong  \bigcup_{i=1}^n H^*. $$
\end{proposition}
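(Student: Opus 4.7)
My plan is to apply the two characterization lemmas for MMD pairs in a lexicographic product (Lemmas \ref{lem mmd}, \ref{lemmaTrueTwins}, \ref{lem mmd in copy H}) to the special case where the first factor is $K_n$, and then check that under the hypothesis $\Delta(H)\le n'-2$ no vertex of $K_n\circ H$ gets discarded from the boundary. I would split the analysis of a generic pair $(a,x),(b,y)\in V(K_n\circ H)$ into the two natural cases $a\neq b$ and $a=b$.

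For the case $a\neq b$, I would first observe that any two distinct vertices of $K_n$ are true twins (since $N_{K_n}[a]=V(K_n)=N_{K_n}[b]$), so Lemma~\ref{lemmaTrueTwins} applies: the pair $(a,x),(b,y)$ is MMD in $K_n\circ H$ iff both $x$ and $y$ have degree $n'-1$ in $H$. The hypothesis $\Delta(H)\le n'-2$ kills this possibility outright, so there are \emph{no} MMD edges between different $K_n$-fibers. This is what forces the product $(K_n\circ H)_{SR}$ to be a disjoint union of graphs, one per $a\in V(K_n)$. (Note that Lemma~\ref{lem mmd} is not triggered in this case because its hypothesis excludes true twins.)

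For the case $a=b$, I would invoke Lemma~\ref{lem mmd in copy H}: the pair $(a,x),(a,y)$ is MMD in $K_n\circ H$ iff $xy\in E(H^*)$. Thus the subgraph of $(K_n\circ H)_{SR+I}$ induced on $\{a\}\times V(H)$ is exactly $H^*$, for every $a\in V(K_n)$. Combined with the previous paragraph, the graph $(K_n\circ H)_{SR+I}$ is therefore isomorphic to $\bigcup_{i=1}^n H^*$.

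The only remaining subtlety is the distinction between $G_{SR+I}$ and $G_{SR}$: I need to confirm that no vertex of $K_n\circ H$ is isolated in $(K_n\circ H)_{SR+I}$, equivalently that $H^*$ has no isolated vertices. Here is where I would again use $\Delta(H)\le n'-2$: for every $x\in V(H)$ there exists $y\neq x$ with $y\notin N_H(x)$, so $d_H(x,y)\ge 2$ and hence $xy\in E(H^*)$. Thus $H^*=H^*_-$, no vertex is dropped when passing from $G_{SR+I}$ to $G_{SR}$, and the isomorphism $(K_n\circ H)_{SR}\cong \bigcup_{i=1}^n H^*$ follows. I do not foresee a genuine obstacle here; the main point to be careful about is remembering that every pair of vertices in $K_n$ are true twins, which sends us down the Lemma~\ref{lemmaTrueTwins} branch rather than the Lemma~\ref{lem mmd} branch.
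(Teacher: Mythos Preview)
Your proof is correct and follows essentially the same approach as the paper: both arguments use Lemma~\ref{lemmaTrueTwins} (together with the observation that any two distinct vertices of $K_n$ are true twins and the hypothesis $\Delta(H)\le n'-2$) to rule out MMD pairs across different fibers, and Lemma~\ref{lem mmd in copy H} to identify each fiber's contribution with $H^*$. Your explicit verification that $H^*$ has no isolated vertices (so that $(K_n\circ H)_{SR+I}=(K_n\circ H)_{SR}$) matches the paper's one-line remark to the same effect.
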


\begin{proof}
We assume that $H$ has maximum degree $\Delta(H)\le n'-2$.  Notice that $H^*$ has no isolated vertices and, by Lemma \ref{lem mmd in copy H}, for any $a\in V(K_n)$, the subgraph $(K_n\circ H)_{SR}$ induced by $(\{a\}\times V(H))\cap \partial(K_n\circ H)$ is isomorphic to $H^*$.

Also, by Lemma \ref{lemmaTrueTwins}, for any distinct $a,b\in V(K_n)$ and any $x,y\in V(H)$, the vertices $(a,x)$ and $(b,y)$ are not MMD in $K_n\circ H$. Therefore, the result follows.
\end{proof}

We define the {\em TF-boundary} of a  noncomplete graph $G=(V,E)$ as a set $\partial_{TF}(G) \subseteq \partial(G)$, where $x\in \partial_{TF}(G)$ whenever there exists $y\in \partial (G)$, such that $x$ and $y$ are MMD in $G$ and $N_G[x]\ne N_G[y]$ (which means that $x,y$ are not true twins).
The \emph{strong resolving TF-graph} of $G$ is a graph $G_{SRS}$  with vertex set $V(G_{SRS}) = \partial_{TF}(G)$, where two vertices $u,v$ are adjacent in $G_{SRS}$ if and only if $u$ and $v$ are MMD in $G$ and $N_G[x]\ne N_G[y]$. Since the strong resolving TF-graph is a subgraph of the strong resolving graph, an instance of the problem of transforming a graph into its strong resolving TF-graph forms part of the general problem of transforming a graph into its strong resolving graph. From \cite{Oellermann2007}, it is known that this general transformation is polynomial. Thus, the problem of  transforming a graph into its strong resolving TF-graph is also polynomial.

\begin{proposition}{\rm \cite{Kuziak2014}}\label{SR-MaxDgn'-2}
Let $G$ be a connected noncomplete   graph of order $n\ge 2$ and let $H$ be a graph of order $n'\ge 2$.
If $H$ has maximum degree $\Delta(H)\le n'-2$, then
$$(G\circ H)_{SR}\cong (G_{SRS}\circ H^*) \cup \bigcup_{i=1}^{n-|\partial_{TF}(G)|} H^*. $$
\end{proposition}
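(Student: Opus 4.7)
The plan is to combine the building blocks from the proofs of Propositions~\ref{SR-graph-twins-free} and \ref{SR-Completegraph}, but now allowing true twins in $G$; the role of $\partial(G)$ and $G_{SR}$ will be taken by $\partial_{TF}(G)$ and $G_{SRS}$ precisely because the hypothesis $\Delta(H)\le n'-2$ will kill every contribution coming from true-twin pairs.

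First, I would record a preliminary fact: since $\Delta(H)\le n'-2$, the graph $H^*$ has no isolated vertices. Indeed, given $x\in V(H)$, pick $z\in V(H)\setminus N_H[x]$ (which exists by the degree assumption); then $d_H(x,z)\ge 2$, so $xz\in E(H^*)$. Combined with Lemma~\ref{lem mmd in copy H}, this gives that for every $a\in V(G)$ the layer $\{a\}\times V(H)$ is entirely contained in $\partial(G\circ H)$ and induces a copy of $H^*$ in $(G\circ H)_{SR}$.

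Next I would analyse cross-layer edges. Take distinct $a,b\in V(G)$ and arbitrary $x,y\in V(H)$. If $a$ and $b$ are true twins in $G$, Lemma~\ref{lemmaTrueTwins} together with $\Delta(H)\le n'-2$ (so no vertex of $H$ has degree $n'-1$) shows that $(a,x)$ and $(b,y)$ are never MMD in $G\circ H$. If $a$ and $b$ are not true twins, Lemma~\ref{lem mmd} says $(a,x)$ and $(b,y)$ are MMD in $G\circ H$ if and only if $a,b$ are MMD in $G$, i.e.\ if and only if $ab\in E(G_{SRS})$. Hence every cross-layer edge of $(G\circ H)_{SR}$ is of the second kind and corresponds to an edge of $G_{SRS}$.

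The key step — and the one I expect to need the most care — is to verify that non-$\partial_{TF}$ layers are genuinely disconnected from the rest. Suppose $a\in \partial_{TF}(G)$ and $b\notin \partial_{TF}(G)$. I claim $a$ and $b$ cannot even be MMD in $G$. If they were true twins then $N_G[a]=N_G[b]$, and any witness $c$ of $a\in \partial_{TF}(G)$ would automatically witness $b\in \partial_{TF}(G)$, contradiction. If they were MMD but not true twins, then $N_G[a]\ne N_G[b]$, so $b$ itself would witness $a$ being in $\partial_{TF}(G)$ (fine) and symmetrically $a$ would witness $b\in \partial_{TF}(G)$, again a contradiction. Thus $a,b$ are not MMD in $G$, and by Lemma~\ref{lem mmd} no pair $(a,x),(b,y)$ is MMD in $G\circ H$.

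Putting the pieces together: on $\partial_{TF}(G)\times V(H)$ the intra-layer edges form copies of $H^*$ and the inter-layer edges are precisely those dictated by $G_{SRS}$, yielding an induced subgraph isomorphic to $G_{SRS}\circ H^*$; on the remaining $n-|\partial_{TF}(G)|$ layers, each $\{a\}\times V(H)$ forms a copy of $H^*$ with no edges to any other vertex of $(G\circ H)_{SR}$. This gives exactly the decomposition
\[
(G\circ H)_{SR}\cong (G_{SRS}\circ H^*)\cup \bigcup_{i=1}^{n-|\partial_{TF}(G)|} H^*,
\]
as claimed.
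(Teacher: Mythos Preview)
Your proof is correct and follows essentially the same route as the paper's: both use Lemma~\ref{lem mmd in copy H} to get the intra-layer $H^*$ structure, Lemma~\ref{lemmaTrueTwins} to eliminate cross-layer edges between true-twin layers, and Lemma~\ref{lem mmd} to identify the remaining cross-layer edges with those of $G_{SRS}$. The only difference is that you spell out explicitly why a layer indexed by $a\notin\partial_{TF}(G)$ cannot be joined to any other layer, whereas the paper simply asserts this; your added case analysis is a welcome clarification but not a new idea.
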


\begin{proof}
We assume that $H$ has maximum degree $\Delta(H)\le n'-2$.  Notice that $H^*$ has no isolated vertices and, by Lemma  \ref{lem mmd in copy H}, for any $a\in V(G)$, the subgraph $(G\circ H)_{SR}$ induced by $(\{a\}\times V(H))\cap \partial(G\circ H)$ is isomorphic to $H^*$.

Also, by Lemma \ref{lemmaTrueTwins}, if two distinct vertices $a,b$ are true twins in $G$ and $x,y\in V(H)$, then $(a,x)$ and $(b,y)$ are not MMD in $G\circ H$. So, from Lemmas \ref{lem mmd} and \ref{lem mmd in copy H} we deduce that the subgraph of $(G\circ H)_{SR}$ induced by $(\partial_{TF}(G)\times V(H))\cap \partial (G\circ H)$ is isomorphic to $G_{SRS}\circ H^*$. Moreover, for $a\not\in \partial_{TF} (G)$ and $b\in \partial_{TF} (G)$ there are no edges of $(G\circ H)_{SR}$ joining vertices belonging to $\{a\}\times V(H)$ with vertices belonging to $\{b\}\times V(H)$. Therefore,
the result follows.
\end{proof}

Figure \ref{max degree} shows the graph $(K_1+(K_1\cup K_2))\circ P_4$ and its strong resolving graph. Notice that $(P_4)^*\cong P_4$ and $(K_1+(K_1\cup K_2))_{SRS} \cong P_3$. So, $((K_1+(K_1\cup K_2))\circ P_4)_{SR}\cong (P_3\circ P_4)\cup P_4$.

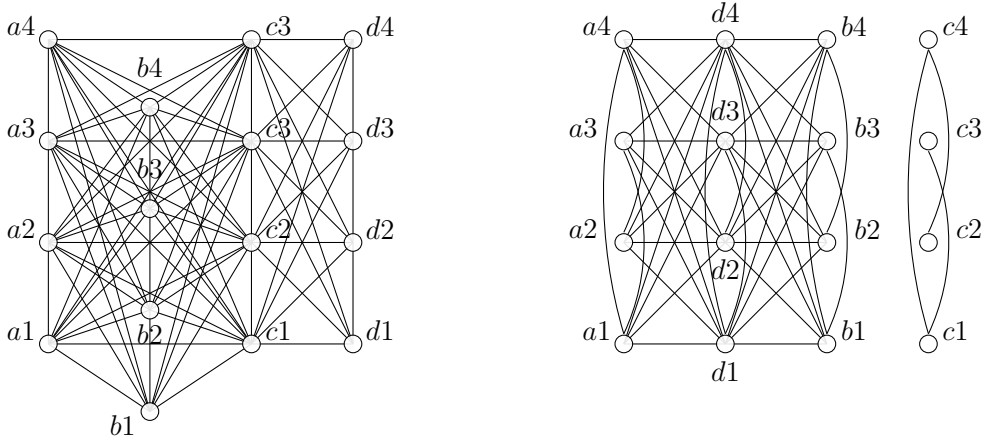
\begin{figure}[h]
\centering
\begin{tikzpicture}[scale=.9, transform shape]
\draw(0,0)--(0,4.5);
\draw(1.5,-1)--(1.5,3.5);
\draw(3,0)--(3,4.5);
\draw(4.5,0)--(4.5,4.5);
\draw(0,0)--(4.5,0);
\draw(0,0)--(1.5,-1);
\draw(1.5,-1)--(3,0);
\draw(0,1.5)--(4.5,1.5);
\draw(0,1.5)--(1.5,0.5);
\draw(1.5,0.5)--(3,1.5);
\draw(0,3)--(4.5,3);
\draw(0,3)--(1.5,2);
\draw(1.5,2)--(3,3);
\draw(0,4.5)--(4.5,4.5);
\draw(0,4.5)--(1.5,3.5);
\draw(1.5,3.5)--(3,4.5);

\draw(0,0)--(1.5,0.5);
\draw(0,0)--(1.5,2);
\draw(0,0)--(1.5,3.5);

\draw(0,1.5)--(1.5,-1);
\draw(0,1.5)--(1.5,2);
\draw(0,1.5)--(1.5,3.5);

\draw(0,3)--(1.5,-1);
\draw(0,3)--(1.5,0.5);
\draw(0,3)--(1.5,3.5);

\draw(0,4.5)--(1.5,-1);
\draw(0,4.5)--(1.5,0.5);
\draw(0,4.5)--(1.5,2);

\draw(0,0)--(3,1.5);
\draw(0,0)--(3,3);
\draw(0,0)--(3,4.5);

\draw(0,1.5)--(3,0);
\draw(0,1.5)--(3,3);
\draw(0,1.5)--(3,4.5);

\draw(0,3)--(3,0);
\draw(0,3)--(3,1.5);
\draw(0,3)--(3,4.5);

\draw(0,4.5)--(3,0);
\draw(0,4.5)--(3,1.5);
\draw(0,4.5)--(3,3);

\draw(1.5,-1)--(3,1.5);
\draw(1.5,-1)--(3,3);
\draw(1.5,-1)--(3,4.5);

\draw(1.5,0.5)--(3,0);
\draw(1.5,0.5)--(3,3);
\draw(1.5,0.5)--(3,4.5);

\draw(1.5,2)--(3,0);
\draw(1.5,2)--(3,1.5);
\draw(1.5,2)--(3,4.5);

\draw(1.5,3.5)--(3,0);
\draw(1.5,3.5)--(3,1.5);
\draw(1.5,3.5)--(3,3);

\draw(3,0)--(4.5,1.5);
\draw(3,0)--(4.5,3);
\draw(3,0)--(4.5,4.5);

\draw(3,1.5)--(4.5,0);
\draw(3,1.5)--(4.5,3);
\draw(3,1.5)--(4.5,4.5);

\draw(3,3)--(4.5,0);
\draw(3,3)--(4.5,1.5);
\draw(3,3)--(4.5,4.5);

\draw(3,4.5)--(4.5,0);
\draw(3,4.5)--(4.5,1.5);
\draw(3,4.5)--(4.5,3);

\filldraw[fill opacity=0.9,fill=white]  (0,0) circle (0.13cm);
\filldraw[fill opacity=0.9,fill=white]  (0,1.5) circle (0.13cm);
\filldraw[fill opacity=0.9,fill=white]  (0,3) circle (0.13cm);
\filldraw[fill opacity=0.9,fill=white]  (0,4.5) circle (0.13cm);
\filldraw[fill opacity=0.9,fill=white]  (1.5,-1) circle (0.13cm);
\filldraw[fill opacity=0.9,fill=white]  (1.5,0.5) circle (0.13cm);
\filldraw[fill opacity=0.9,fill=white]  (1.5,2) circle (0.13cm);
\filldraw[fill opacity=0.9,fill=white]  (1.5,3.5) circle (0.13cm);
\filldraw[fill opacity=0.9,fill=white]  (3,0) circle (0.13cm);
\filldraw[fill opacity=0.9,fill=white]  (3,1.5) circle (0.13cm);
\filldraw[fill opacity=0.9,fill=white]  (3,3) circle (0.13cm);
\filldraw[fill opacity=0.9,fill=white]  (3,4.5) circle (0.13cm);
\filldraw[fill opacity=0.9,fill=white]  (4.5,0) circle (0.13cm);
\filldraw[fill opacity=0.9,fill=white]  (4.5,1.5) circle (0.13cm);
\filldraw[fill opacity=0.9,fill=white]  (4.5,3) circle (0.13cm);
\filldraw[fill opacity=0.9,fill=white]  (4.5,4.5) circle (0.13cm);

\node at (-0.4,0.2) {$a1$ };
\node at (-0.4,1.7) {$a2$ };
\node at (-0.4,3.2) {$a3$ };
\node at (-0.4,4.7) {$a4$ };
\node at (1.1,-1.2) {$b1$ };
\node at (1.5,0.15) {$b2$ };
\node at (1.5,2.6) {$b3$ };
\node at (1.5,4.1) {$b4$ };
\node at (3.4,0.2) {$c1$ };
\node at (3.4,1.7) {$c2$ };
\node at (3.4,3.2) {$c3$ };
\node at (3.4,4.7) {$c3$ };
\node at (4.9,0.2) {$d1$ };
\node at (4.9,1.7) {$d2$ };
\node at (4.9,3.2) {$d3$ };
\node at (4.9,4.7) {$d4$ };

\draw(8.5,0)--(11.5,0);
\draw(8.5,1.5)--(11.5,1.5);
\draw(8.5,3)--(11.5,3);
\draw(8.5,4.5)--(11.5,4.5);

\draw(8.5,0)--(10,1.5);
\draw(8.5,0)--(10,3);
\draw(8.5,0)--(10,4.5);

\draw(8.5,1.5)--(10,0);
\draw(8.5,1.5)--(10,3);
\draw(8.5,1.5)--(10,4.5);

\draw(8.5,3)--(10,0);
\draw(8.5,3)--(10,1.5);
\draw(8.5,3)--(10,4.5);

\draw(8.5,4.5)--(10,0);
\draw(8.5,4.5)--(10,1.5);
\draw(8.5,4.5)--(10,3);

\draw(11.5,0)--(10,1.5);
\draw(11.5,0)--(10,3);
\draw(11.5,0)--(10,4.5);

\draw(11.5,1.5)--(10,0);
\draw(11.5,1.5)--(10,3);
\draw(11.5,1.5)--(10,4.5);

\draw(11.5,3)--(10,0);
\draw(11.5,3)--(10,1.5);
\draw(11.5,3)--(10,4.5);

\draw(11.5,4.5)--(10,0);
\draw(11.5,4.5)--(10,1.5);
\draw(11.5,4.5)--(10,3);


\filldraw[fill opacity=0.9,fill=white]  (8.5,0) circle (0.13cm);
\filldraw[fill opacity=0.9,fill=white]  (8.5,1.5) circle (0.13cm);
\filldraw[fill opacity=0.9,fill=white]  (8.5,3) circle (0.13cm);
\filldraw[fill opacity=0.9,fill=white]  (8.5,4.5) circle (0.13cm);
\filldraw[fill opacity=0.9,fill=white]  (10,0) circle (0.13cm);
\filldraw[fill opacity=0.9,fill=white]  (10,1.5) circle (0.13cm);
\filldraw[fill opacity=0.9,fill=white]  (10,3) circle (0.13cm);
\filldraw[fill opacity=0.9,fill=white]  (10,4.5) circle (0.13cm);
\filldraw[fill opacity=0.9,fill=white]  (11.5,0) circle (0.13cm);
\filldraw[fill opacity=0.9,fill=white]  (11.5,1.5) circle (0.13cm);
\filldraw[fill opacity=0.9,fill=white]  (11.5,3) circle (0.13cm);
\filldraw[fill opacity=0.9,fill=white]  (11.5,4.5) circle (0.13cm);
\filldraw[fill opacity=0.9,fill=white]  (13,0) circle (0.13cm);
\filldraw[fill opacity=0.9,fill=white]  (13,1.5) circle (0.13cm);
\filldraw[fill opacity=0.9,fill=white]  (13,3) circle (0.13cm);
\filldraw[fill opacity=0.9,fill=white]  (13,4.5) circle (0.13cm);

\draw (8.5,0.15) .. controls (8.1,1.5) and (8.1,3) .. (8.5,4.35);
\draw (8.5,0.15) .. controls (8.9,1) and (8.9,2) .. (8.5,2.85);
\draw (8.5,1.65) .. controls (8.9,2.5) and (8.9,3.5) .. (8.5,4.35);
\draw (10,0.15) .. controls (9.6,1.5) and (9.6,3) .. (10,4.35);
\draw (10,0.15) .. controls (10.4,1) and (10.4,2) .. (10,2.85);
\draw (10,1.65) .. controls (10.4,2.5) and (10.4,3.5) .. (10,4.35);
\draw (11.5,0.15) .. controls (11.1,1.5) and (11.1,3) .. (11.5,4.35);
\draw (11.5,0.15) .. controls (11.9,1) and (11.9,2) .. (11.5,2.85);
\draw (11.5,1.65) .. controls (11.9,2.5) and (11.9,3.5) .. (11.5,4.35);
\draw (13,0.15) .. controls (12.6,1.5) and (12.6,3) .. (13,4.35);
\draw (13,0.15) .. controls (13.4,1) and (13.4,2) .. (13,2.85);
\draw (13,1.65) .. controls (13.4,2.5) and (13.4,3.5) .. (13,4.35);

\node at (8.1,0.2) {$a1$ };
\node at (7.9,1.7) {$a2$ };
\node at (7.9,3.2) {$a3$ };
\node at (8.1,4.7) {$a4$ };
\node at (10,-0.4) {$d1$ };
\node at (10,1.1) {$d2$ };
\node at (10,3.45) {$d3$ };
\node at (10,4.9) {$d4$ };
\node at (11.9,0.2) {$b1$ };
\node at (12.1,1.7) {$b2$ };
\node at (12.1,3.2) {$b3$ };
\node at (11.9,4.7) {$b4$ };
\node at (13.4,0.2) {$c1$ };
\node at (13.6,1.7) {$c2$ };
\node at (13.6,3.2) {$c3$ };
\node at (13.4,4.7) {$c4$ };
\end{tikzpicture}
\caption{The graph $(K_1+(K_1\cup K_2))\circ P_4$ and its strong resolving graph}
\label{max degree}
\end{figure}

\subsection{Corona product graphs}

The structure of the strong resolving graph of the corona product can be easily described. By equation \eqref{DistanceCoronaGraphs}, that shows the distance between vertices in the corona product, we deduce that $\partial(G\odot H)=\bigcup_{i=1}^nV_i$ and two vertices $x,y$ are adjacent in $(G\odot H)_{SR}$ if and only if either $x\in V_i$ and $y\in V_j$, where $i\ne j$, or $x,y\in V_i$ and they are adjacent in $H^*$, that is, they are true twins in $H$ or they are adjacent in $H^c$. Therefore $(G\odot H)_{SR}$ is obtained from the complete graph of vertex set $\partial(G\odot H)=\bigcup_{i=1}^nV_i$ by removing the edges  of each copy of $H$ connecting two non-true twin vertices. So we can deduce the following Corollary.

\begin{corollary}
Let $G$, $H$ be two graphs, then $(G\odot H)_{SR}$ is a complete graph if and only if $H$ is either a complete graph or an empty graph.
\end{corollary}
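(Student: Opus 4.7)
The plan is to apply directly the description of $(G\odot H)_{SR}$ established in the paragraph preceding the corollary: $(G\odot H)_{SR}$ is obtained from the complete graph on $\partial(G\odot H)=\bigcup_{i=1}^{n}V_i$ by removing, inside each copy $H_i$, exactly those edges whose endpoints fail to be true twins in $H$. Hence $(G\odot H)_{SR}$ is complete if and only if no edge of $H$ joins a pair of non-true-twin vertices, and the corollary reduces to showing that this combinatorial condition on $H$ singles out exactly the complete and empty graphs.

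For the ``if'' direction I would dispatch the two cases by inspection. If $H\cong K_m$, then $N_H[u]=V(H)=N_H[v]$ for any two vertices $u,v\in V(H)$, so every pair of vertices is a true-twin pair; every edge of $H$ is therefore between true twins, nothing is deleted, and $(G\odot H)_{SR}$ is complete. If $H\cong N_m$ there are no edges of $H$ at all, so the deletion step is vacuous. For the ``only if'' direction I would assume $(G\odot H)_{SR}$ is complete, restate this as ``every edge of $H$ is between true twins,'' conclude $H\cong N_m$ immediately if $H$ is edgeless, and otherwise exploit the true-twin equivalence classes of $V(H)$ (each of which induces a clique sharing a common closed neighborhood) to pin down the structure of $H$ and deduce $H\cong K_m$.

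The step I expect to be the main obstacle is precisely closing this converse. The condition ``every edge of $H$ is a true-twin edge'' only forces $H$ to be a disjoint union of cliques (namely the true-twin classes), so the delicate point is to rule out nontrivial disjoint-cliques configurations and leave only the extreme cases $K_m$ and $N_m$. My plan is to revisit the structural description — examining, for instance, whether a nontrivial partition into multiple twin-classes would force some vertex of $V_i$ to fall outside $\partial(G\odot H)$ or produce a missing cross-copy adjacency in $(G\odot H)_{SR}$ — in order to extract the additional rigidity needed to finish the argument.
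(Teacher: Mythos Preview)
Your reduction is correct: from the paragraph preceding the corollary, $(G\odot H)_{SR}$ is complete if and only if $H^*$ is complete on $V(H)$, which is exactly the condition that every edge of $H$ joins a pair of true twins. Your ``if'' direction is fine. The difficulty you flag in the ``only if'' direction, however, is not a technical obstacle to be overcome but a genuine counterexample to the corollary as stated. The condition ``every edge of $H$ is a true-twin edge'' characterises exactly the disjoint unions of cliques (equivalently, the $P_3$-free graphs), not only $K_m$ and $N_m$. For a concrete witness take $G=K_2$ and $H=K_2\cup K_1$: the unique edge of $H$ joins true twins, so nothing is deleted from the complete graph on $\bigcup_i V_i$ and $(G\odot H)_{SR}\cong K_6$, yet $H$ is neither complete nor empty.

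Your proposed rescue---searching for a vertex of some $V_i$ that drops out of $\partial(G\odot H)$ or for a missing cross-copy adjacency---cannot succeed, because the structural description you are invoking is correct and already forces every cross-copy pair to be MMD and every vertex of every $V_i$ to lie in the boundary (for $|V(G)|\ge 2$). The gap is in the statement, not in your argument: the equivalence that actually follows from the preceding paragraph is that $(G\odot H)_{SR}$ is complete if and only if $H$ is a disjoint union of (possibly trivial) cliques.
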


An interesting example of a strong resolving TF-graph defined in Section \ref{SectionLexicographic} can be obtained from the corona graph $G\odot K_{n'}$, $n'\ge 2$, where $G$ has order $n\ge 2$. Notice that any two distinct vertices belonging to any two copies of the complete graph $K_{n'}$ are MMD, but if they are in the same copy, then they are also true twins. Thus, in this case $\partial_{TF}(G\odot K_{n'})=\partial(G\odot K_{n'})$, while we have that $(G\odot K_{n'})_{SR}\cong K_{nn'}$ and $(G\odot K_{n'})_{SRS}$ is isomorphic to a complete $n$-partite graph $K_{n',n',\dots ,n'}$.

\section{Open problems}

The strong resolving graph $G_{SR}$ of a graph $G$ is still not enough known as an interesting and very useful construction. In this sense, some of the next problems would be worthwhile to be dealt with.

\begin{itemize}
  \item It is already known that constructing the strong resolving graph $G_{SR}$ of a graph $G$ can be done in polynomial time. However, not much is known on deciding whether a given graph $H$ is the strong resolving graph of a graph $G$. Some partial results are given in this work, but still much more is required to get a complete characterization.

  \item Is it possible to describe some properties of the strong resolving graph $G_{SR}$ based on some properties of the graph $G$? Can we state for instance whether $G_{SR}$ is connected, bipartite or hamiltonian? Can we assert which is the diameter or the girth of $G_{SR}$ based on some properties of $G$?

  \item Is it possible to characterize the family of graphs $G$ for which $G_{SR}\cong G$?

  \item Given a graph $G$, is it possible to find all the graph $H$ such that $H_{SR}\cong G$?

  \item Is there any other usefulness of the strong resolving graph distinct from that of computing the strong metric dimension?
\end{itemize}

\bibliographystyle{elsart-num-sort}



\end{document}